\newcounter{casenum}
\newcommand{\Tr}{\ensuremath{^{\mr{T}}}}
\newcommand{\mr}[1]{\ensuremath{\mathrm{#1}}}
\newcommand{\fnc}[1]{\ensuremath{\mathit{#1}}}
\newcommand{\bfnc}[1]{\ensuremath{\bm{\mathit{#1}}}}
\newcommand{\xm}[0]{\ensuremath{x_{m}}}
\newcommand{\xone}[0]{x_{1}}
\newcommand{\xtwo}[0]{x_{2}}
\newcommand{\xthree}[0]{x_{3}}
\newcommand{\xil}[0]{\ensuremath{\xi_{l}}}
\newcommand{\Ohat}[0]{\ensuremath{\hat{\Omega}}}
\newcommand{\U}[0]{\ensuremath{\bfnc{U}}}
\newcommand{\Fxm}[0]{\ensuremath{\bfnc{F}}_{\xm}}
\newcommand{\Vone}[0]{\ensuremath{\fnc{V}_{1}}}
\newcommand{\Vtwo}[0]{\ensuremath{\fnc{V}_{2}}}
\newcommand{\Vthree}[0]{\ensuremath{\fnc{V}_{3}}}
\newcommand{\Vm}[0]{\ensuremath{\fnc{V}_{m}}}
\newcommand{\E}[0]{\ensuremath{\fnc{E}}}
\newcommand{\Fxmv}[0]{\ensuremath{\bm{\fnc{F}}_{\xm}^{(v)}}}
\theoremstyle{plain}
\newtheorem{theorem}{Theorem}
\newtheorem{lemma}[theorem]{Lemma}
\theoremstyle{definition}
\newtheorem{remark}{Remark}
\begin{document}

\begin{frontmatter}



\title{High-order Positivity-preserving $L_2$-stable Spectral Collocation Schemes for the 3-D compressible Navier-Stokes equations}


\author{Nail K. Yamaleev\footnote{Corresponding author. Department of Mathematics and Statistics, Tel.: +1 757 683 3423. {\it E-mail address:} nyamalee@odu.edu} and Johnathon Upperman}

\address{Old Dominion University, Norfolk, VA 23529, USA}

\begin{abstract}
This paper extends a new class of positivity--preserving, \ entropy stable spectral collocation schemes developed for the one-dimensional compressible Navier-Stokes equations in \cite{UY_1Dlow, UY_1Dhigh} to three spatial dimensions. The new high-order schemes are provably $L_2$ stable, design--order accurate for smooth solutions, and guarantee the pointwise positivity of thermodynamic variables for 3-D compressible viscous flows. Similar to the 1-D counterpart, the proposed schemes for the 3-D Navier-Stokes equations are constructed by using a flux-limiting technique that  combines a positivity-violating entropy stable method of arbitrary order of accuracy and a novel first-order positivity-preserving entropy stable finite volume-type  scheme discretized on the same Legendre-Gauss-Lobatto grid points used for constructing the high-order discrete operators. The positivity preservation and excellent discontinuity-capturing properties are achieved by adding an artificial dissipation in the form of the low- and high-order Brenner-Navier-Stokes diffusion operators. To our knowledge, this is the first family of positivity-preserving, entropy stable  schemes of arbitrary order of accuracy for the 3-D compressible Navier-Stokes equations. 
\end{abstract}

\begin{keyword}
summation-by-parts (SBP) operators, entropy stability, positivity-preserving schemes, Brenner regularization, artificial dissipation, the Navier-Stokes equations.
\end{keyword}

\end{frontmatter}


\section{Introduction}
A new family of high-order entropy stable spectral collocation schemes that provide pointwise positivity of thermodynamic variables for the 1-D compressible Navier-Stokes equations has recently been introduced by the authors of the present paper in \cite{UY_1Dlow, UY_1Dhigh}. The entropy stability is achieved by using summation-by-parts operators and entropy consistent fluxes for discretizing both the inviscid and viscous terms  of the symmetrized form of the compressible Navier-Stokes equations. We have proven in \cite{UY_1Dhigh} that the new high-order flux-limiting schemes are both pointwise positivity-preserving and $L_2$ stable for the Navier-Stokes equations in one spatial dimension. 

Herein, we generalize and extend the 1-D positivity-preserving entropy stable methodology of  \cite{UY_1Dlow, UY_1Dhigh} to the three-dimensional compressible Navier-Stokes equations
on static unstructured hexahedral grids. Similar to the 1-D high-order schemes in  \cite{UY_1Dhigh}, the new schemes for the 3-D compressible Navier-Stokes equations are constructed by combining
a positivity-violating entropy stable method of arbitrary order of accuracy and a novel first-order positivity-preserving entropy stable method developed
in the companion paper \cite{UY_3Dlow}. In contrast to positivity--preserving methods developed in \cite{Svard, GHKL, GMPT}, which are at most 2nd-order accurate, 
the proposed schemes provide an arbitrary order of accuracy for sufficiently smooth solutions of the 3-D compressible Navier-Stokes equations.
Unlike the positivity-preserving high-order discontinuous Galerkin (DG) method developed for the Navier-Stokes  equations in \cite{Zhang}, the new high-order schemes guarantee not only the so-called weak positivity of thermodynamic variables, 
but also the pointwise positivity at individual collocation points that are directly used for approximation of the governing equations. 
Furthermore, the positivity-preserving DG scheme developed in \cite{Zhang} imposes very severe constraints on the time step which is about an order of magnitude less than that of the baseline method for high-order polynomial bases. Note that the actual time step constraint in  \cite{Zhang} is much stiffer, because the lower bound on  the artificial viscosity coefficient required for positivity may grow dramatically, as the velocity gradients increase. 

Another distinctive feature of the proposed methodology is that the new high-order positivity-preserving schemes satisfy the discrete entropy inequality, thus facilitating a rigorous $L_2$-stability proof for the symmetric form of the discretized Navier-Stokes equations.
To our knowledge, this is the first family of high-order schemes that provide
both the pointwise positivity of thermodynamic variables and entropy stability for the 3-D compressible Navier-Stokes equations.

\section{The 3-D Navier-Stokes and Brenner-Navier-Stokes equations}
\label{BNS}
The 3-D compressible Navier-Stokes equations in curvilinear coordinates $(\xi_1,\xi_2,\xi_3)$ are written in conservation law form as follows:
\begin{equation}
\label{eq:NS_Curvilinear}
\begin{split}
&\frac{\partial J\U}{\partial t} + \sum\limits_{m, l=1}^{3}\frac{\partial}{\partial \xi_l}\left(\bfnc{F}_{\xil}-\bfnc{F}_{\xil}^{(v)}\right)= 0,\\
&\bfnc{F}_{\xil}\equiv \sum\limits_{m=1}^{3}\fnc{J}\frac{\partial \xi_l}{\partial \xm}\Fxm,\quad
\bfnc{F}_{\xil}^{(v)}\equiv\sum\limits_{m=1}^{3}\fnc{J}\frac{\partial \xi_l}{\partial \xm}\Fxm^{(v)},
\end{split}
\end{equation}
where $\Fxm$, and $\Fxmv$ are the inviscid and viscous fluxes associated with the Cartesian coordinates $(x_1, x_2, x_3)$, which are given by
\begin{equation*}
\Fxm = \left[\rho\Vm,\rho\Vm\Vone+\delta_{m,1}\fnc{P},\rho\Vm\Vtwo+\delta_{m,2}\fnc{P},\rho\Vm\Vthree+\delta_{m,3}\fnc{P},\rho\Vm\fnc{H}\right]\Tr, 
\end{equation*}
\begin{equation}\label{eq:Fv}
\Fxmv=\left[0,\tau_{1,m},\tau_{2,m},\tau_{3,m},\sum\limits_{i=1}^{3}\tau_{i,m}\fnc{U}_{i}-\kappa\frac{\partial \fnc{T}}{\partial\xm}\right]\Tr.
\end{equation} 
In the above equations, $\U = \left[\rho,\rho\Vone,\rho\Vtwo,\rho\Vthree,\rho\E\right]\Tr$ is a vector of the conservative variables,  $\fnc{P}$ is the pressure, and $\fnc{H}$ is the specific total enthalpy.
The viscous stresses in Eq.~(\ref{eq:Fv}) are given by
\begin{equation}\label{eq:tau}
\tau_{i,j} = \mu\left(\frac{\partial\fnc{V}_{i}}{\partial x_{j}}+\frac{\partial\fnc{V}_{j}}{\partial x_{i}}
-\delta_{i,j}\frac{2}{3}\sum\limits_{n=1}^{3}\frac{\partial\fnc{V}_{n}}{\partial x_{n}}\right),
\end{equation}
where $\mu(T)$ is the dynamic viscosity, $\kappa(T)$ is the thermal conductivity, and $\delta_{i,j}$ is the Kronecker delta.
The governing equations~(\ref{eq:NS_Curvilinear}) are subject to boundary conditions that are assumed to satisfy the entropy inequality. 

It is well known that there are no theoretical results on positivity of thermodynamic variables for the compressible Navier-Stokes equations. To overcome this problem, we regularize Eq.~\eqref{eq:NS_Curvilinear} by adding artificial dissipation 
in the form of the diffusion operator of the Brenner-Navier-Stokes equations  introduced in \cite{Brenner1}. The Brenner-Navier-Stokes equations 
are given by
\begin{equation}\label{eq:BNS_C}
\frac{\partial\U}{\partial t}
+
\sum\limits_{m=1}^{3}
\frac{\partial \Fxm}{\partial \xm} 
= 
\sum\limits_{m=1}^{3}\frac{\partial {\bm F}_{x_m}^{(B)}}{\partial \xm}
, \quad 
\forall \left(\xone,\xtwo,\xthree\right)\in\Omega,\quad t\ge 0,
\end{equation}
where $\sigma$ is the volume diffusivity and the viscous fluxes, 
${\bm F}_{x_m}^{(B)}, m=1,2,3$, are defined as
\begin{equation}
\label{eq:FB}
\begin{split}
&{\bm F}_{x_m}^{(B)}
=
\Fxmv 
+
\sigma 
\frac{\partial \rho}{\partial \xm}
\left[
\begin{array}{ccc}
1 & \bfnc{V} & \E 
\end{array}
\right]^\top
.
\end{split}
\end{equation} 

Despite that Eqs.~\eqref{eq:NS_Curvilinear} and \eqref{eq:BNS_C} are very similar to each other,  the Brenner-Navier-Stokes equations possess some remarkable properties that are not available for the Navier-Stokes equations. In contrast to Eq.~\eqref{eq:NS_Curvilinear},  the Brenner-Navier-Stokes equations guarantee existence of a weak solution and uniqueness of a strong solution, ensure global-in-time positivity of the density and temperature, satisfy a large class of entropy inequalities, and is compatible with a minimum entropy principle  \cite{FV, 
GUERPOP2014}.  
We rely on these properties of the Brenner-Navier-Stokes equations and regularize the Navier-Stokes equations by adding the following dissipation term to Eq.~\eqref{eq:NS_Curvilinear}: 
\begin{equation}
\label{rns3d}
\frac{\partial\U}{\partial t}
+
\sum\limits_{m=1}^{3}
\frac{\partial \Fxm}{\partial \xm} 
= 
\sum\limits_{m=1}^{3}
\left[
\frac{\partial {\bm F}_{x_m}^{(v)}}{\partial \xm}
+
\frac{\partial {\bm F}_{x_m}^{(AD)}}{\partial \xm}
\right],
\end{equation}
where the the artificial dissipation flux  ${\bm F}_{x_m}^{(AD)}$ can be obtained from the viscous flux of the Brenner-Navier-Stokes equations, ${\bm F}_{x_m}^{(B)}$, by setting $\mu=\mu^{AD}$, $\sigma = c_\rho \mu^{AD}/\rho$, and $\kappa = c_T\mu^{AD}$.
The coefficient $\mu^{AD}$ is an artificial viscosity and $c_T$ and $c_\rho$ are  positive tunable coefficients, which are set equal to $c_\rho = 0.9$ and $c_T = c_\rho \frac{c_{\fnc{P}}}{\gamma}$ for all test problems considered in this paper.

A necessary condition for selecting a unique, physically
relevant solution among possibly many weak solutions of Eqs.~\eqref{eq:NS_Curvilinear} and \eqref{rns3d}  is the entropy
inequality. Both the Navier-Stokes and regularized Navier-Stokes equations are equipped with the same convex scalar entropy function $\mathcal{S}= -\rho s$ and entropy flux $\mathcal{F}=-\rho s \bm{V}$, where $s$ is the thermodynamic entropy. 
It can be shown that the following inequality holds for Eqs. (\ref{eq:NS_Curvilinear}) and (\ref{rns3d}) assuming the corresponding boundary conditions are entropy stable (e.g., see \cite{GCLSS2019}):
\begin{equation}\label{eq:Eineq.3}
\int_{\Ohat} \frac{\partial (J \mathcal{S})}{\partial \tau}\mr{d}\Ohat =  \frac{d}{d \tau} \int_{\Ohat} J \mathcal{S}\mr{d}\Ohat  \leq 0.
\end{equation} 
Along with the entropy inequality given by Eq.~(\ref{eq:Eineq.3}), the regularized Navier-Stokes equations (Eq.~\eqref{rns3d}) preserve some other key properties of the Brenner-Navier-Stokes equations including the positivity of thermodynamic variables.
Herein, we propose to develop new numerical schemes that mimic these properties of the 3-D regularized Navier-Stokes equations at the discrete level.

\section{3-D high-order SBP operators}
\label{HIGHOPERATOR}
\subsection{High--order Diagonal-Norm Summation-by-parts Operators}
\label{SBP}
The derivatives in Eq.~\eqref{rns3d} are discretized by spectral collocation operators that satisfy 
the summation-by-parts (SBP) property \cite{CFNF}. 
In the one-dimensional setting, this mimetic property is achieved by approximating
 the first derivative with a discrete operator, $D$, and
using local mass $\mathcal{P}$ and stiffness $\mathcal{Q}$ matrices satisfying the following properties:
\begin{equation}
\label{SCO4}
\begin{split}
&
D=\mathcal{P}^{-1} \mathcal{Q}, \quad
\mathcal{P} = \mathcal{P}^\top,   \quad {\bf v}^\top \mathcal{P} {\bf v} > 0, \quad \forall {\bf v} \ne {\bf 0}, 
\\
&
\mathcal{Q}=B-\mathcal{Q}^\top, \quad B = {\rm diag}(-1,0,\dots,0,1) .
\end{split}
\end{equation}
Only diagonal-norm SBP operators are considered herein, which is critical for proving the entropy inequality at the discrete level \cite{CFNF}. 

In one spatial dimension, the physical domain is divided into $K$ non-overlapping elements $[x_1^k, x_{N_p}^k
]$, so that $x_1^k=x_{N_p}^{(k-1)}$. The discrete solution inside each element  is defined on 
Legendre-Gauss-Lobatto (LGL) points, ${\bf x}_k = \left[x_1^k, \dots, x_{N_p}^k \right]^\top$.  
These local points ${\bf x}_k$ are referred to as solution points. 
Along with the solution points, we also define a set of intermediate points $\bar{\bf x}_k =
\left[\bar{x}_0^k, \dots, \bar{x}_{N_p}^k \right]^\top$ prescribing bounding control volumes around 
each solution point. These points referred to as flux points form  
a complementary grid whose spacing is precisely equal to the diagonal elements of the positive
definite matrix $\mathcal{P}$ in Eq. (\ref{SCO4}), i.e., $\Delta \bar{\bf x} = \mathcal{P} {\bf 1}$,
where $ \bar{\bf x} = \left [\bar{x}_0, \dots, \bar{x}_{N_p} \right]^\top$ is a vector of flux points, 
${\bf 1}=[1, \dots, 1]^\top$, and $\Delta$ is an $N_p\times (N_p +1)$ matrix corresponding to the two-point backward
difference operator \cite{CFNF,YC3}.
As has been proven in \cite{FCNYS}, 
these discrete SBP derivative operators can be recast in the following telescopic flux form:
\begin{equation}
\label{TF2}
 \mathcal{P}^{-1} \mathcal{Q} {\bf f}  = \mathcal{P}^{-1} \Delta \bar{\bf f},
\end{equation}
where $\bar{\bf f}$ is a $p$th-order flux vector defined at the flux points. 

Using tensor product arithmetic, the above one-dimensional SBP operators naturally extend to two and three spatial dimensions.
The multidimensional tensor product operators are defined as
\begin{equation}
\begin{aligned}
&D_{\xi^1} = \left( D_N \otimes I_N \otimes  I_N \otimes I_5 \right), & 
&\mathcal{P}_{\xi^1} = \left( \mathcal{P}_N \otimes I_N \otimes  I_N  \otimes I_5 \right),
\\
&\mathcal{P}_{\xi^1,\xi^2} =   \left( \mathcal{P}_N \otimes \mathcal{P}_N  \otimes  I_N \otimes I_5 \right), & 
&\mathcal{P} =   \left( \mathcal{P}_N \otimes \mathcal{P}_N  \otimes  \mathcal{P}_N \otimes I_5 \right),
\\
&\widehat{\mathcal{P}} =   \left( \mathcal{P}_N \otimes \mathcal{P}_N  \otimes  \mathcal{P}_N  \right), & 
&\mathcal{P}_{\perp,\xi^1} =   \left( I_N \otimes \mathcal{P}_N  \otimes  \mathcal{P}_N \otimes I_5 \right),
\end{aligned}
\end{equation}
with similar definitions for other directions and operators $\mathcal{Q}_{\xi^i}$, $\Delta_{\xi^i}$ and $B_{\xi^i}$.  We also use the following notation $\mathcal{P}_{ijk} = \mathcal{P}_{i,i} \mathcal{P}_{j,j} \mathcal{P}_{k,k}$ and $\mathcal{P}_{ij} = \mathcal{P}_{i,i} \mathcal{P}_{j,j}$ where $\mathcal{P}_{i,i}$ is the scalar $i$th diagonal entry of $\mathcal{P}_N$.

\section{Baseline 3-D high-order spectral collocation schemes}
\label{SCRNS}
With the 3-D SBP operators discussed in the previous section, a baseline 3-D semi-discrete spectral collocation scheme of arbitrary order of accuracy 
for the 3-D Navier-Stokes equations (Eq.~\eqref{eq:NS_Curvilinear}) can be written as follows:
\begin{equation}
\label{semiDiscCurviBaseline} 
\hat{{\bf U}}_t + 
\sum\limits_{ l=1}^{3} 
\mathcal{P}^{-1}_{\xi^l} \Delta_{\xi^l}\hat{\bar{{\bf f}}}_l - 
D_{\xi^l}\hat{{\bf f}}^{(v)}_l = 
\sum\limits_{ l=1}^{3} \mathcal{P}^{-1}_{\xi^l} \hat{{\bf g}}^{(BC)}_l + \mathcal{P}^{-1}_{\xi^l} \hat{{\bf g}}^{(Int)}_l
\end{equation}  
where $\hat{{\bf U}} = \left[J \right]{\bf U}$,  $\hat{{\bf g}}^{(BC)}_l$  and $\hat{{\bf g}}^{(Int)}_l$ are boundary and interface penalty terms.

The contravariant inviscid fluxes, $\hat{\bar{{\bf f}}}_l$, defined at flux points are given by 
\begin{equation}
\begin{array}{l}
\label{PTH_ECFLUX} 
\hat{\bar{{\bf f}}}_m(\vec{\xi}_{\overline{i}}) = 
\sum\limits_{ j=i+1}^{N} \sum\limits_{ l=1}^{i} 2 q_{l,j}\bar{{ f}}_{(S)}({\bf U}(\vec{\xi}_{l}),{\bf U}(\vec{\xi}_{j})) \frac{\hat{\vec{{\bf a}}}^m(\vec{\xi}_{l}) +\hat{\vec{{\bf a}}}^m(\vec{\xi}_{j})}{2} \ \text{for} \, \, 1 \leq i \leq N-1, 
\\
\hat{\bar{{\bf f}}}_m(\vec{\xi}_{\overline{i}}) =  \bar{{ f}}_{(S)}({\bf U}(\vec{\xi}_{\overline{i}}),{\bf U}(\vec{\xi}_{\overline{i}})) \hat{\vec{{\bf a}}}^m(\vec{\xi}_{\overline{i}}) \ \text{for} \, \, \overline{i} \in \{0,N \},
\end{array}
\end{equation}
where $m=1, 2, 3$ and $\bar{{ f}}_{(S)}(\cdot,\cdot)$ is a two-point, consistent, entropy conservative  flux that satisfies 
\begin{equation}
\label{2PT_EC}
\left(w_1 - w_2 \right)^\top \bar{{f}}_{(S)}(U_1,U_2) =  \vec{\psi}_1 - \vec{\psi}_2  \quad 
\end{equation}
for any two admissible states $U_1$ and $U_2$ \cite{TAD2003}. For all test problems considered, we use the entropy conservative flux developed in \cite{Chand}.
In \cite{UY_3Dlow}, we show that the proposed method for ensuring positivity is  independent of a particular choice of $\bar{{ f}}_{(S)}$.

 In the above equation and hereafter, $\hat{\bf a}^l_m(\vec{\xi}_{ijk})$ is a $p$th-order discrete approximation of $J\frac{\partial \xi^l}{\partial x^m}$ at the solution point $\vec{\xi}_{ijk}$, which is constructed such that it satisfied the geometric conservation law (GCL) equations. These approximations are not unique and the specific formulas used for $\hat{\bf a}^l_m(\vec{\xi}_{ijk})$ in the present work can be found elsewhere (e.g. \citep{TLGCL}).

The contravariant viscous fluxes, $\hat{{\bf f}}^{(v)}_l$, are constructed as follows:
\begin{equation}
\label{physicalVISCFLUX}
\begin{aligned}
\hat{{\bf f}}^{(v)}_l &= \sum\limits_{ m=1}^{3} [\hat{a}^l_m] {\bf f}^{(v)}_{x^m}  
, \quad 
{\bf f}^{(v)}_{x^m} = \sum\limits_{ j=1}^{3} [c^{(v)}_{m,j}] {\bf\Theta}_{x^j}.
\end{aligned}
\end{equation}
For each $1 \leq m, j \leq 3$, $[c^{(v)}_{m,j}]$ is a block-diagonal matrix with $5 \times 5$ blocks, such that  $[\left( c^{(v)}_{m,j} \right)^T] = [c^{(v)}_{j,m}]$, and $\sum\limits_{ m=1}^{3}\sum\limits_{ j=1}^{3} {\bf v}^T[c^{(v)}_{m,j}] {\bf v} \geq 0, \forall {\bf v}$, i.e.,  the full viscous tensor is symmetric positive semi-definite (SPSD). 
 
The gradient of the entropy variables, ${\bf\Theta}_{x^j}$, is discretized by using an approach that closely resembles the local discontinuous Galerkin (LDG) method developed in \cite{CS}, which can be written as
\begin{equation}
\label{gradENTVARS}
\begin{array}{ll}
{\bf\Theta}_{x^j} &= \sum\limits_{ l=1}^{3} [\hat{a}^l_j] [J^{-1}] 
\left( D_{\xi^l} {\bf w} +  \mathcal{P}^{-1}_{\xi^l} \hat{\bf g}^{{\bf\Theta}}_{l}  \right)
\\
\hat{\bf g}^{{\bf\Theta}}_{1}(\vec{\xi}_{ijk}) 
&=
\frac{1}{2} \left(\delta_{1i}\Delta_1 {\bf w}(\vec{\xi}_{i-1jk})  + \delta_{Ni}\Delta_1 {\bf w}(\vec{\xi}_{ijk})  \right)
\\
\Delta_1 {\bf w}(\vec{\xi}_{ijk})  &=  {\bf w}(\vec{\xi}_{i+1jk}) - {\bf w}(\vec{\xi}_{ijk}),
\end{array}
\end{equation}
where $\delta_{ij}$ is the Kronecker delta. Note that similar discretizations are used in each computational direction. 

In \cite{CFNF}, it has been proven that the baseline high-order spectral collocation scheme given by Eqs.~(\ref{semiDiscCurviBaseline}--\ref{gradENTVARS}) satisfies the discrete entropy inequality if the corresponding boundary conditions are entropy stable. 
However, entropy stability alone does not guarantee the positivity of thermodynamic variables, if strong discontinuities are present in the domain. One of the key objectives of this paper is to modify the baseline scheme to address this shortcoming.

\section{Baseline 3-D spectral collocation scheme with high-order artificial dissipation}
\label{SCRNS_highBREN}

To control the amount of entropy production in regions where the discrete solution is under-resolved, we generalize the method developed in \cite{UY} to three spatial dimensions and add artificial dissipation in the form of the Brenner diffusion operator to the baseline high-order scheme (Eq.~\eqref{semiDiscCurviBaseline}) presented in the foregoing section. 
The baseline 3-D spectral collocation scheme with the high-order artificial dissipation is given by
\begin{equation}
\label{semiDiscCurviBaseline_highBREN}
\hat{{\bf U}}_t
+ 
\sum\limits_{ l=1}^{3} 
\mathcal{P}^{-1}_{\xi^l} \Delta_{\xi^l}\hat{\bar{{\bf f}}}_l 
- 
D_{\xi^l}
\left[
\hat{{\bf f}}^{(v)}_l 
+
\hat{{\bf f}}^{(AD_p)}_l 
\right]
= 
\sum\limits_{ l=1}^{3} \mathcal{P}^{-1}_{\xi^l} 
\left[ 
\hat{{\bf g}}_l^{(AD_p)}
+
\hat{{\bf g}}_l
\right],
\end{equation} 
where $\hat{{\bf g}}_l = \hat{{\bf g}}^{(BC)}_l + \hat{{\bf g}}^{(Int)}_l$.  The high-order artificial dissipation terms, $\hat{{\bf f}}^{(AD_p)}_l $ and $\hat{{\bf g}}_l^{(AD_p)}$, are discretized similarly to the viscous terms of Eq.~\eqref{semiDiscCurviBaseline}, as discussed in Section~\ref{SCRNS}.  The Brenner fluxes, $\hat{{\bf f}}^{(AD_p)}_l$, are constructed as follows:
\begin{equation}
\label{brennerVISCFLUX}
\begin{aligned}
\hat{{\bf f}}^{(AD_p)}_l &= \sum\limits_{ m=1}^{3} [\hat{a}^l_m] {\bf f}^{(AD_p)}_{x^m}  
, \quad 
{\bf f}^{(AD_p)}_{x^m} = \sum\limits_{ j=1}^{3} [c^{(B)}_{m,j}] {\bf\Theta}_{x^j},
\end{aligned}
\end{equation}
where $[c^{(B)}_{m,j}]$, $1 \leq m,j \leq 3$ are block--diagonal matrices with $5 \times 5$ blocks, $[\left( c^{(B)}_{m,j} \right)^T] = [c^{(B)}_{j,m}]$, and $\sum\limits_{ m=1}^{3}\sum\limits_{ j=1}^{3} {\bf v}^T[c^{(B)}_{m,j}] {\bf v} \geq 0, \forall {\bf v}$, i.e.,  the full artificial dissipation tensor is symmetric positive semi-definite (SPSD).  

To ensure consistency, maintain design-order accuracy for smooth resolved solutions, and control the amount of dissipation added in regions where the solution is under-resolved or discontinuous, we use the artificial viscosity, $\bfnc{\mu}^{AD}$, that is described in Section~\ref{AV}.  The mass and heat viscosity at each solution point are set as 
${\bm \sigma}(\vec{\xi}_{ijk}) = c_{\rho} \bfnc{\mu}^{AD}(\vec{\xi}_{ijk})/{\bm \rho}(\vec{\xi}_{ijk})$, 
and 
${\bm \kappa}(\vec{\xi}_{ijk}) = c_T\bfnc{\mu}^{AD}(\vec{\xi}_{ijk})$ (see Section~\ref{BNS}).

The high-order spectral collocation scheme given by Eq.~(\ref{semiDiscCurviBaseline_highBREN}) is conservative and stable in the entropy sense. The conservation follows immediately from the telescopic flux form of the inviscid terms and the SBP form of the viscous and artificial dissipation terms. The entropy stability of the discretized Navier-Stokes terms in Eq.~(\ref{semiDiscCurviBaseline_highBREN}) is proven in \cite{CFNF}. The entropy dissipation properties of the artificial dissipation terms follow immediately form Eq.~(\ref{brennerVISCFLUX}) and the symmetric positive semi-definiteness of  the artificial viscous tensor  $[c^{(B)}_{m,j}]$, $1 \leq m,j \leq 3$.

\section{Artificial Viscosity}
\label{AV}  
The scalar artificial viscosity, $\bfnc{\mu}^{AD}$, is used for both the high- and low-order artificial dissipation operators.  
Details on how the artificial viscosity coefficient is constructed are presented in \cite{UY_3Dlow}. Herein, we only briefly outline its key elements. The artificial viscosity coefficient is constructed based on the finite element residual of the entropy equation and the physical properties of the fluid. 
In the $k$-th grid element, $\bfnc{\mu}^{AD}$ is defined as follows:
$$
\bfnc{\mu}^{AD} = Sn^k \mu^k_{\max},
$$
where where $Sn$ is a sensor function ($0 \leq Sn \leq 1$) and $\mu^k_{\max}$ is the magnitude of the artificial viscosity in the $k$-th grid element.

To detect grid elements where the solution loses its regularity or becomes under-resolved, the sensor is constructed as a function of the finite element residual of the entropy equation, which is given by
\begin{equation}
\label{ENTROPYRESIDUALSENSOR} 
Sn^k = 
\left\{
\begin{array}{ll}
 Sn^k_0 , & {\rm if} \ Sn^k_0 \geq \max(0.2,\delta), \\
0,                                                                                                                                                                     & {\rm otherwise}
\end{array}
\right.
Sn^k_0 = \max({\bf r}^k)^{\max(1,\frac{p-1}{p-1.5})},
\end{equation}
where $p$ is the polynomial order and ${\bf r}(\vec{\xi}_{ijk})$ is a pointwise normalized entropy residual.
To take into account the physics of a problem, we also augment the entropy residual-based sensor with compression and pressure gradient sensors. These sensors are introduced to identify those regions where the amount of artificial viscosity can be reduced without sacrificing the solution accuracy. We refer  the reader to \cite{UY_3Dlow} for further details.

In each element, the upper bound of the artificial viscosity, $\mu^k_{\max}$, is set to be proportional to the maximum value of local velocity and
pressure jumps between neighboring solution points \cite{UY_3Dlow}. The result is that we minimize the amount of artificial dissipation
at contact discontinuities and make $\mu^k_{\max}$ proportional to the discontinuity strength, such that the velocity and pressure jumps act
as a limiter, if spurious oscillations are present in the solution.

The globally continuous artificial viscosity $\bfnc{\mu}_k^{AD}$ is then constructed by using the following smoothing procedure.
At each element vertex, we from a unique vertex viscosity coefficient, $\mu^{\rm ver}_i  = \max\limits_{k \in I_i}\mu^k_{\max}$,
where $I_i$ contains indices of all elements that share the $i$th grid vertex.  After that, 
the globally continuous artificial viscosity is obtained by using the tri-linear interpolation of 8 vertex viscosities, $\mu^{\rm ver}_i$, of the given hexahedral element.

\section{High-order positivity--preserving flux-limiting scheme}
\label{POSPRESLIM} 
Following an approach developed in \cite{UY_1Dhigh}, we construct a new high-order positivity--preserving flux-limiting scheme  for the 3-D Navier-Stokes equations by  combining the corresponding positivity-violating high-order spectral collocation scheme (Eq.~\eqref{semiDiscCurviBaseline}) and the first-order positivity--preserving finite volume scheme presented in the companion paper \cite{UY_3Dlow}. This methodology is presented next.  

\subsection{Positivity}
\label{LIMCONSPOS}
We first consider the 1st-order explicit Euler approximation of the time derivative term in Eq.~(\ref{eq:NS_Curvilinear}), so that on a given element 
$$
\begin{array}{l}
\hat{\bf U}^{n+1}_p = \hat{\bf U}^{n} + \tau \left(\frac{d {\bf \hat{\bf U}}}{dt}\right)_p, \\
\hat{\bf U}^{n+1}_1 = \hat{\bf U}^{n} + \tau \left(\frac{d {\bf \hat{\bf U}}}{dt}\right)_1,
\end{array}
$$
where $\hat{\bf U}^{n+1}_p = \left[J \right]{\bf U}^{n+1}_p$ and $\hat{\bf U}^{n+1}_1 = \left[J \right]{\bf U}^{n+1}_1$ are $p$th-order and first-order numerical solutions defined on the same Legendre-Gauss-Lobatto (LGL) grid elements with the same high-order metric terms.
In the above equation, $\hat{\bf U}^{n+1}_1$ is obtained by the first-order  positivity-preserving entropy stable scheme presented in \cite{UY_3Dlow}.
Therefore, at every $i$th solution point of each element $\text{IE}((\hat{\bf U}^{n+1}_1)_i) > 0$ and $(\rho^{n+1}_1)_i > 0$, where $\text{IE}((\hat{\bf U}^{n+1}_1)_i)$ is the internal energy associated with the 1st-order solution $(\hat{\bf U}^{n+1}_1)_i$.

To combine the 1st- and $p$th-order schemes, we use the flux-limiting technique developed in \cite{UY_1Dhigh}, which is in fact equivalent to limiting the low- and high-order solution vectors of the conservative variables:
\begin{equation}
\begin{array}{ll}
\label{LimSol}
\hat{\bf U}^{n+1}(\theta_f) &=  \hat{\bf U}^n + \tau \left[(1-\theta_f)\left(\frac{d\hat{\bf U}}{dt}\right)_1  +   \theta_f \left(\frac{d \hat{\bf U}}{dt}\right)_p\right]  \\
&= (1-\theta_f)\hat{\bf U}^{n+1}_1 + \theta_f \hat{\bf U}^{n+1}_p 
= \hat{\bf U}^{n+1}_1 + \theta_f [ \hat{\bf U}^{n+1}_p - \hat{\bf U}^{n+1}_1], 
\end{array} 
\end{equation}
where the flux limiter $\theta_f$, $0 \leq \theta_f \leq 1$,  is a constant on a given high-order element.

At each solution point, local lower bounds of density and internal energy are defined as follows:
\begin{equation}
\label{ALEPHUSED}
\epsilon^{\rho}_i = ({\rho}_1)^{n+1}_i \aleph,  \hspace{1cm} \epsilon^{\text{IE}}_i = \text{IE}((\hat{\bf U}_1)^{n+1}_i) \aleph,
\end{equation}
where $\aleph$ , $0 < \aleph < 1$, is a function that is bounded from below by a small positive number (e.g., $10^{-8}$), which approaches to its lower bound if the solution is smooth and goes to 1 if the solution loses its regularity.
In the present analysis, $\aleph$ is defined as follows: 
\begin{equation}
\label{ALEPHEXPLICIT}
\aleph^k =  \max(10^{-8},L^k),  \hspace{0.5cm}  L^k = Sn^k \max\limits_i\left(\frac{|\Delta P|}{2P_A}\right),
\end{equation}         
where $0 \leq Sn^k \leq 1$ is the residual-based sensor given by  Eq.~(\ref{ENTROPYRESIDUALSENSOR}) and $0 \leq \max\limits_i\left(\frac{|\Delta P|}{2P_A}\right) < 1$ is one half of the maximum relative two--point pressure jump (including jumps at the interfaces) on the $k$th element.
Note that $0 < \epsilon^{\rho}_i < ({\rho}_1)^{n+1}_i$ and 
$0 < \epsilon^{\text{IE}}_i < \text{IE}((\hat{\bf U}_1)^{n+1}_i)$ because $0 \leq L^k < 1$.

We now prove the following two lemmas.
\begin{lemma}
\label{EPSRHO}
For every $i$-th solution point, define a set 
$$
H^{\rho}_{i} = \{ \theta_f \in [0,1] \, \, | \, \, \rho^{n+1}_i(\theta_f) \geq \epsilon^{\rho}_i \}.
$$
Then, the set $H^{\rho}_{i}$ can be written as $H^{\rho}_{i} = [0,\theta^{\rho}_i]$ where $0 < \theta^{\rho}_i \leq 1$. Furthermore, the following statements hold: 1) if  $0 \leq \theta_f < \theta^{\rho}_i$, then $\rho^{n+1}_i(\theta_f) > \epsilon^{\rho}_i$ and 2) if $\theta^{\rho}_i < 1$, then $\rho^{n+1}_i(\theta^{\rho}_i) = \epsilon^{\rho}_i$.
\end{lemma}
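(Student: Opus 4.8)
The plan is to observe that, at a fixed $i$th solution point, the density is an \emph{affine} function of the scalar limiter $\theta_f$, and that the superlevel set of a continuous affine function on a compact interval is automatically a subinterval. Extracting the first (density) component of Eq.~\eqref{LimSol} and dividing by the fixed positive metric Jacobian $J_i$ (identical for the low- and $p$th-order solutions, since they share the same high-order metric terms) gives
\[
\rho^{n+1}_i(\theta_f) = (\rho_1)^{n+1}_i + \theta_f\left[(\rho_p)^{n+1}_i - (\rho_1)^{n+1}_i\right],
\]
so $\theta_f \mapsto \rho^{n+1}_i(\theta_f)$ is affine, hence continuous and monotone, on $[0,1]$. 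First I would record the endpoint value $\rho^{n+1}_i(0) = (\rho_1)^{n+1}_i$ and recall from the paragraph preceding the lemma that $0 < \epsilon^{\rho}_i < (\rho_1)^{n+1}_i$ (because $\aleph^k < 1$, as $0 \le L^k < 1$); thus $\rho^{n+1}_i(0) > \epsilon^{\rho}_i$, i.e. $0 \in H^{\rho}_i$ with the defining inequality \emph{strict} at $\theta_f = 0$.

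Next I would set $\theta^{\rho}_i := \sup H^{\rho}_i$. Since $H^{\rho}_i$ is the preimage of the closed half-line $[\epsilon^{\rho}_i,\infty)$ under the continuous map $\rho^{n+1}_i(\cdot)$ restricted to the compact set $[0,1]$, it is closed; it is nonempty and, by the slackness just noted, contains a one-sided neighbourhood of $0$, so $0 < \theta^{\rho}_i \le 1$ and $\theta^{\rho}_i \in H^{\rho}_i$. Affineness of $\rho^{n+1}_i(\cdot)$ then forces $H^{\rho}_i$ to be an interval, hence $H^{\rho}_i = [0,\theta^{\rho}_i]$. For statement~1, take $0 \le \theta_f < \theta^{\rho}_i$: if $\rho^{n+1}_i(\cdot)$ is nondecreasing the slackness at $0$ already gives $\rho^{n+1}_i(\theta_f) \ge \rho^{n+1}_i(0) > \epsilon^{\rho}_i$, while if it is strictly decreasing then $\rho^{n+1}_i(\theta_f) > \rho^{n+1}_i(\theta^{\rho}_i) \ge \epsilon^{\rho}_i$; either way the inequality is strict. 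For statement~2, suppose $\theta^{\rho}_i < 1$: by definition of the supremum every $\theta_f$ slightly above $\theta^{\rho}_i$ satisfies $\rho^{n+1}_i(\theta_f) < \epsilon^{\rho}_i$, so letting $\theta_f \downarrow \theta^{\rho}_i$ and combining continuity with $\rho^{n+1}_i(\theta^{\rho}_i) \ge \epsilon^{\rho}_i$ yields $\rho^{n+1}_i(\theta^{\rho}_i) = \epsilon^{\rho}_i$. In this case one also has the explicit value $\theta^{\rho}_i = ((\rho_1)^{n+1}_i - \epsilon^{\rho}_i)/((\rho_1)^{n+1}_i - (\rho_p)^{n+1}_i)$, which is manifestly in $(0,1)$ because then $(\rho_p)^{n+1}_i < \epsilon^{\rho}_i < (\rho_1)^{n+1}_i$.

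I do not expect a genuine obstacle here: the entire content is that a closed superlevel set of a continuous affine function that contains a one-sided neighbourhood of $0$ must be of the form $[0,\theta^{\rho}_i]$ with $\theta^{\rho}_i > 0$. The only point demanding a little care is the bookkeeping of the boundary cases — confirming $\theta^{\rho}_i > 0$ (which is precisely the slackness $\rho^{n+1}_i(0) > \epsilon^{\rho}_i$, itself a consequence of $\aleph^k < 1$) and checking that the ``strict below $\theta^{\rho}_i$'' and ``equality at $\theta^{\rho}_i$'' claims remain compatible in the degenerate case $\theta^{\rho}_i = 1$, where statement~2 is vacuous and statement~1 follows directly from the endpoint value and monotonicity. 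A fully computational alternative would skip the supremum argument entirely, defining $\theta^{\rho}_i = 1$ when $(\rho_p)^{n+1}_i \ge \epsilon^{\rho}_i$ and by the displayed fraction otherwise, and then verifying the two numbered statements by inspection of the affine formula.
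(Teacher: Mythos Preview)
Your proposal is correct and follows essentially the same approach as the paper: the paper's proof is the one-line observation that $\rho^{n+1}_i(\theta_f)$ is linear (affine) in $\theta_f$ with $\rho^{n+1}_i(0) > \epsilon^{\rho}_i$, and you have simply spelled out in full the standard consequences of that observation (closedness of the superlevel set, the supremum characterisation, and the boundary/degenerate cases).
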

\begin{proof}
This follows directly from the fact that $\rho^{n+1}_i(\theta_f)$ given by Eq.~(\ref{LimSol}) is a linear equation in the variable $\theta_f$ with $\rho^{n+1}_i(0) > \epsilon^{\rho}_i$.  
\end{proof}

\begin{lemma}
\label{EPSIE}
For every $i$-th solution point, define a set
$$
H^{\text{IE}}_{i} = \{ \theta_f \in H^{\rho}_{i} 
\, \, | \, \,
\text{IE}(\hat{\bf U}^{n+1}_i(\theta_f)) \geq \epsilon^{\text{IE}}_i \},
$$
where $H^{\rho}_{i} = [0,\theta^{\rho}_i]$ is defined in Lemma~\ref{EPSRHO}.
Then, the set $H^{\text{IE}}_{i}$ can be written as $H^{\text{IE}}_{i} = [0,\theta^{\text{IE}}_i]$ where $0 < \theta^{\text{IE}}_i \leq \theta^{\rho}_i$.
Furthermore, the following statements hold: 1) if $0 \leq \theta_f < \theta^{\text{IE}}_i$, then $\text{IE}(\hat{\bf U}^{n+1}_i(\theta_f)) > \epsilon^{\text{IE}}_i$ and 2) if $\theta^{\text{IE}}_i < \theta^{\rho}_i$, then $\text{IE}(\hat{\bf U}^{n+1}_i(\theta^{\text{IE}}_i)) = \epsilon^{\text{IE}}_i$.
\end{lemma}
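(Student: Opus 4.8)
The plan is to follow the same strategy as in the proof of Lemma~\ref{EPSRHO}, replacing the linearity argument by a concavity argument, because, unlike the density, the internal energy is not an affine function of the conservative variables. First I would observe that at each solution point $i$ the limited solution $\hat{\bf U}^{n+1}_i(\theta_f)$ of Eq.~(\ref{LimSol}) is an affine function of $\theta_f$, and that its density component satisfies $\rho^{n+1}_i(\theta_f) \geq \epsilon^{\rho}_i > 0$ on the whole interval $H^{\rho}_i = [0,\theta^{\rho}_i]$, by Lemma~\ref{EPSRHO} and Eq.~(\ref{ALEPHUSED}). Consequently the affine segment $\{\hat{\bf U}^{n+1}_i(\theta_f) : \theta_f \in H^{\rho}_i\}$ lies entirely in the open half-space on which the density component is positive, so $\text{IE}(\hat{\bf U}^{n+1}_i(\theta_f))$ is well defined (no division by a vanishing density) for every $\theta_f \in H^{\rho}_i$.

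Next I would use the standard fact that the internal energy, $\text{IE} = \rho E - \tfrac{1}{2}\rho|\bm{V}|^2$ expressed in the conservative variables $(\rho,\rho\bm{V},\rho E)$, is a concave function on the region where the density is positive: the kinetic-energy term $-\tfrac{1}{2}|\rho\bm{V}|^2/\rho$ is the negative of a perspective of a convex quadratic, hence concave, while the total-energy term is linear (and applying $\text{IE}$ to the $J$-scaled hatted variables only rescales the result by the fixed positive Jacobian and preserves concavity). Composing this concave function with the affine map $\theta_f \mapsto \hat{\bf U}^{n+1}_i(\theta_f)$ shows that $g(\theta_f) := \text{IE}(\hat{\bf U}^{n+1}_i(\theta_f)) - \epsilon^{\text{IE}}_i$ is continuous and concave on $H^{\rho}_i$. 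At the left endpoint, $g(0) = \text{IE}((\hat{\bf U}_1)^{n+1}_i) - \epsilon^{\text{IE}}_i = (1-\aleph)\,\text{IE}((\hat{\bf U}_1)^{n+1}_i) > 0$ by Eq.~(\ref{ALEPHUSED}), $0 < \aleph < 1$, and the positivity of the first-order internal energy $\text{IE}((\hat{\bf U}_1)^{n+1}_i)$.

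The remaining steps are then purely one-dimensional. Since $g$ is concave and continuous, the super-level set $\{\theta_f : g(\theta_f) \geq 0\}$ is closed and convex, so $H^{\text{IE}}_i$, being its intersection with the closed interval $H^{\rho}_i$, is a closed subinterval of $[0,\theta^{\rho}_i]$; it contains $0$ and, since $g(0) > 0$, an entire neighbourhood of $0$ in $H^\rho_i$, so $H^{\text{IE}}_i = [0,\theta^{\text{IE}}_i]$ with $0 < \theta^{\text{IE}}_i \leq \theta^{\rho}_i$ and $g(\theta^{\text{IE}}_i) \geq 0$. For statement~1, for $\theta_f \in (0,\theta^{\text{IE}}_i)$ I would write $\theta_f = (1-\lambda)\cdot 0 + \lambda\,\theta^{\text{IE}}_i$ with $\lambda \in (0,1)$ and use concavity, $g(\theta_f) \geq (1-\lambda)\,g(0) + \lambda\,g(\theta^{\text{IE}}_i) \geq (1-\lambda)\,g(0) > 0$, while $\theta_f = 0$ is covered directly by $g(0) > 0$; hence $\text{IE}(\hat{\bf U}^{n+1}_i(\theta_f)) > \epsilon^{\text{IE}}_i$. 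For statement~2, if $\theta^{\text{IE}}_i < \theta^{\rho}_i$ then every point slightly to the right of $\theta^{\text{IE}}_i$ lies in $H^{\rho}_i$ but not in $H^{\text{IE}}_i$, so $g < 0$ there; letting such points tend to $\theta^{\text{IE}}_i$ and using continuity gives $g(\theta^{\text{IE}}_i) \leq 0$, which combined with $g(\theta^{\text{IE}}_i) \geq 0$ yields $\text{IE}(\hat{\bf U}^{n+1}_i(\theta^{\text{IE}}_i)) = \epsilon^{\text{IE}}_i$.

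The only point requiring care — and essentially the entire content of the lemma beyond what Lemma~\ref{EPSRHO} already gives — is the concavity of the internal energy in the conservative variables on the positive-density half-space; once that is recorded, the argument reduces to the elementary geometry of super-level sets of a concave function on an interval, with no further obstacle, provided one has verified (through Lemma~\ref{EPSRHO}) that the affine segment traced out by $\theta_f$ never leaves that half-space.
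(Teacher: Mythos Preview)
Your proof is correct and follows essentially the same approach as the paper: both arguments hinge on the concavity of the internal energy in the conservative variables (on the positive-density half-space) together with $g(0)>0$, with the paper doing an explicit case split on the sign of $g(\theta^\rho_i)$, invoking the intermediate value theorem, and then writing out the concavity inequality (Eqs.~(\ref{IE_convex})--(\ref{IE_convex_cocave})), whereas you package the same content via the observation that super-level sets of concave functions are convex intervals. The difference is purely organizational.
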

\begin{proof}
For each $i$th solution point, if $\text{IE}(\hat{\bf U}^{n+1}_i(\theta^{\rho}_i)) \geq \epsilon^{\text{IE}}_i$, then we set $\theta^{\text{IE}}_i = \theta^{\rho}_i$.  Assume that there is a solution point such that $\text{IE}(\hat{\bf U}^{n+1}_i(\theta^{\rho}_i)) < \epsilon^{\text{IE}}_i$.  Since $\rho^{n+1}_i(\theta_f) \geq \epsilon^{\rho}_i > 0$ $\forall \theta_f \in [0, \theta^{\rho}_i]$, $\text{IE}(\hat{\bf U}^{n+1}_i(\theta_f))$ is a continuous function with respect to $\theta_f$ for $\theta_f \in [0, \theta^{\rho}_i]$.  Taking into account that $\text{IE}(\hat{\bf U}^{n+1}_i(0)) = \text{IE}((\hat{\bf U}^{n+1}_1)_i) > \epsilon^{\text{IE}}_i$ and  $\text{IE}(\hat{\bf U}^{n+1}_i(\theta^{\rho}_i)) < \epsilon^{\text{IE}}_i$, it follows by the intermediate value theorem that there exists  $\theta^*_i \in (0, \theta^{\rho}_i)$ such that $\text{IE}(\hat{\bf U}^{n+1}_i(\theta^*_i)) = \epsilon^{\text{IE}}_i$.  Let $\theta^{\text{IE}}_i = \theta^*_i$ (note that there is only one $\theta^*_i \in (0, \theta^{\rho}_i)$ such that $\text{IE}(\hat{\bf U}^{n+1}_i(\theta^*_i)) = \epsilon^{\text{IE}}_i$).
Now we show that for  all $0 \leq \theta_f < \theta^{\text{IE}}_i$, we have $\text{IE}(\hat{\bf U}^{n+1}_i(\theta_f)) > \epsilon^{\text{IE}}_i$.  By definition of $\epsilon^{\text{IE}}_i$, $\text{IE}(\hat{\bf U}^{n+1}_i(0)) > \epsilon^{\text{IE}}_i$.  For $0 < \theta_f < \theta^{\text{IE}}_i$, we have
\begin{align}
\label{IE_convex}
\hat{\bf U}^{n+1}_i(\theta_f) &= (1-\theta_f)(\hat{\bf U}_1)^{n+1}_i + \theta_f (\hat{\bf U}_p)^{n+1}_i \\
 &= \frac{\theta_f}{\theta^{\text{IE}}_i}\biggl[\theta^{\text{IE}}_i \left((\hat{\bf U}_p)^{n+1}_i-(\hat{\bf U}_1)^{n+1}_i\right) + (\hat{\bf U}_1)^{n+1}_i\biggr]                  
 + \left(1-\frac{\theta_f}{\theta^{\text{IE}}_i}\right)(\hat{\bf U}_1)^{n+1}_i \nonumber \\ 
 &= 
 \frac{\theta_f}{\theta^{\text{IE}}_i} \hat{\bf U}^{n+1}_i(\theta^{\text{IE}}_i) 
 + 
 \left(1-\frac{\theta_f}{\theta^{\text{IE}}_i}\right)(\hat{\bf U}_1)^{n+1}_i \nonumber .
\end{align} 
Hence, due to the concavity of internal energy 
\begin{equation}
\begin{array}{ll}
\label{IE_convex_cocave}
\text{IE}(\hat{\bf U}^{n+1}_i(\theta_f)) 
&\geq
  \frac{\theta_f}{\theta^{\text{IE}}_i} \text{IE}(\hat{\bf U}^{n+1}_i(\theta^{\text{IE}}_i)) 
  + 
  \left(1-\frac{\theta_f}{\theta^{\text{IE}}_i}\right)\text{IE}((\hat{\bf U}_1)^{n+1}_i) 
  \\
&>
  \frac{\theta_f}{\theta^{\text{IE}}_i} \epsilon^{\text{IE}}_i
  + 
  \left(1-\frac{\theta_f}{\theta^{\text{IE}}_i}\right)\epsilon^{\text{IE}}_i
 =
  \epsilon^{\text{IE}}_i   .
\end{array}
\end{equation} 
\end{proof}
\begin{remark}
Note that $\theta^{\text{IE}}_i$ in Lemma~\ref{EPSIE} can readily be found by solving the quadratic equation for internal energy, which is 
analogous to the one presented in the companion paper \cite{UY_3Dlow}.
\end{remark}
For a given element, we define $\theta_{\text{IE}} = \text{min}_i\{\theta^{\text{IE}}_i\} > 0$.  By construction, $\text{IE}(\hat{\bf U}^{n+1}_i(\theta_{\text{IE}})) \geq \epsilon^{\text{IE}}_i$  and 
$\rho(\hat{\bf U}^{n+1}_i(\theta_{\text{IE}})) \geq \epsilon^{\rho}_i$ for every solution point on the element.  The solution at the $(n+1)$th time level is set equal to $\hat{\bf U}^{n+1}(\theta_{\text{IE}})$, which preserves the pointwise positivity of both density and internal energy. 
\begin{remark}
The above limiting is not immediately conservative for general $\hat{\bf U}^{n+1}_1$ and $\hat{\bf U}^{n+1}_p$.  We refer the reader to Section~\ref{SEMIDISC_LIMSCHEME} which presents an implementation  of this limiting procedure in a way that preserves conservation.
\end{remark}

\subsection{Design order of accuracy}
\label{LIMACCURACY}
In this section, we prove that the proposed limiting scheme is design-order accurate for smooth solutions and sufficient grid resolutions.  Without loss of generality, we assume that the grid resolution depends on a single parameter $0 < h^x \leq 1$, such that all element edges are directly proportional to $h^x$.
In this section, $\| \cdot \|$ denotes the Euclidean norm.  
Let  $\hat{\bf U}^{\text{ex}}_i(t_{n+1})$ be the smooth exact solution at the $i$th solution point at $t = t_{n+1}$. 
For each solution point, we define a local admissible set
$$
\mathcal{A}^{\epsilon}_i = \{ {\bm u}_i 
= 
\left[
\begin{array}{ccc}
\rho & \rho \vec{\bfnc{V}} & \rho E
\end{array}
\right]^\top
 \ \left| \right. \ \text{IE}({\bm u}_i) \geq \epsilon^{\text{IE}}_i \ , \rho_i \geq \epsilon^{\rho}_i \}
$$      
and assume that $\hat{\bf U}^{\text{ex}}_i(t_{n+1}) \in \mathcal{A}^{\epsilon}_i$.  Note that  $\epsilon^{\text{IE}}_i$ and $\epsilon^{\rho}_i$ are positive user-defined parameters that can be made arbitrarily small by selecting a sufficiently small value of the parameter $\aleph$ for a given element.  In the present analysis, $\aleph$, which  is given by Eq.~(\ref{ALEPHEXPLICIT}), is set such that it becomes smaller when the regularity of the numerical solution increases. 
  We also assume that the solution is sufficiently smooth, so that 
$
\| (\hat{\bf U}^{n+1}_1)_i - (\hat{\bf U}^{n+1}_p)_i  \| \le \| (\hat{\bf U}^{n+1}_1)_i - \hat{\bf U}^{\text{ex}}_i(t_{n+1}) \| 
+ \| \hat{\bf U}^{\text{ex}}_i(t_{n+1}) - (\hat{\bf U}^{n+1}_p)_i  \| 
=
 \mathcal{O}(h^x)$, as $h^x \rightarrow 0$.  

Let us show that $\| \hat{\bf U}_i^{n+1}(\theta_{\text{IE}}) - \hat{\bf U}^{\text{ex}}_i(t_{n+1})\| =  \mathcal{O}((h^x)^p)$ for all solution points. If $\theta_i^{\text{IE}} = 1 \, \, \forall i$ on a given element, then 
$\theta_{\text{IE}} = \text{min}_i\{\theta^{\text{IE}}_i\}=1$,
$\hat{\bf U}^{n+1}(\theta_{\text{IE}})= \hat{\bf U}_p^{n+1}$ and the result follows.

We now assume that $\theta_{\text{IE}} < 1$. 
In this case, to prove the consistency of the limiting procedure, it is sufficient to show that $1- \theta_{\text{IE}} =  \mathcal{O}((h^x)^{p-1})$.
Indeed, if $1- \theta_{\text{IE}} =  \mathcal{O}((h^x)^{p-1})$, then for every solution point  we have
\begin{equation}
\label{p-order}
\begin{array}{ll}
\| 
\hat{\bf U}_i^{n+1}(\theta_{\text{IE}}) - \hat{\bf U}^{\text{ex}}_i(t_{n+1})
\| 
&
\le  
(1-\theta_{\text{IE}}) \| (\hat{\bf U}_1)^{n+1}_i - \hat{\bf U}^{\text{ex}}_i(t_{n+1})
\|
\\
&
\quad \quad \ \  +
\theta_{\text{IE}} 
\| 
(\hat{\bf U}_p)^{n+1}_i - \hat{\bf U}^{\text{ex}}_i(t_{n+1})
\| 
\\
&
= (1-\theta_{\text{IE}}) O(h^x) + \theta_{\text{IE}} O((h^x)^p) = O((h^x)^p).
\end{array}
\end{equation}
 To prove that $1- \theta_{\text{IE}} =1- \min\limits_i\{\theta^{\text{IE}}_i\} =  \mathcal{O}((h^x)^{p-1})$, it is sufficient to show that if 
$\theta^{\text{IE}}_i < 1$
(which is only possible if
$(\hat{\bf U}_p)^{n+1}_i \not\in \mathcal{A}^{\epsilon}_i$),
then
$1 - \theta^{\text{IE}}_i =  \mathcal{O}((h^x)^{p-1}) \, \, \forall i$.
Assume that at the $i$th solution point $\theta^{\text{IE}}_i < 1$.
Since $\theta^{\text{IE}}_i \leq \theta^{\rho}_i$, we only have to consider the following two cases: 
1)  $\theta^{\text{IE}}_i = \theta^{\rho}_i$ and $\theta^{\rho}_i < 1$, 2) $\theta^{\text{IE}}_i < \theta^{\rho}_i$.

 Case 1.  For $0<\theta^{\rho}_i < 1$, the following inequalities hold $(\rho_p)^{n+1}_i < \epsilon_i^{\rho}  \leq \rho^{\text{ex}}_i(t_{n+1})$, which lead to  $(\rho_p)^{n+1}_i = \epsilon_i^{\rho} + \mathcal{O}((h^x)^p)$.  From Lemma~\ref{EPSRHO} it follows that  $\theta^{\rho}_i$ satisfies
\begin{equation}
\label{rhoTheta}
\rho_i^{n+1}(\theta^{\rho}_i) 
= (\rho_1)^{n+1}_i + \theta^{\rho}_i((\rho_p)^{n+1}_i -(\rho_1)^{n+1}_i )  = \epsilon_i^{\rho}.
\end{equation} 
Thus,
\begin{equation}
\label{theta_rho_orderpm1}
1- \theta^{\rho}_i  
= 
\frac{\epsilon_i^{\rho} - (\rho_p)^{n+1}_i}{(\rho_1)^{n+1}_i-(\rho_p)^{n+1}_i}
=
\frac{\mathcal{O}((h^x)^p)}{\mathcal{O}(h^x)}
=
\mathcal{O}((h^x)^{p-1}).
\end{equation} 
Taking into account that 
$\theta^{\text{IE}}_i = \theta^{\rho}_i$, we also have
$1- \theta^{\text{IE}}_i = \mathcal{O}((h^x)^{p-1})$.
Using $0 < \theta^{\rho}_i < 1$ and Eq.~(\ref{theta_rho_orderpm1}) yield
\begin{equation}
\label{uTildRho_ordP}
\begin{split}
\| \hat{\bf U}_i^{n+1}(\theta^{\rho}_i) - \hat{\bf U}^{\text{ex}}_i(t_{n+1})\| 
& \le
\| \hat{\bf U}_i^{n+1}(\theta^{\rho}_i) - (\hat{\bf U}_p)^{n+1}_i \| 
+
\| (\hat{\bf U}_p)^{n+1}_i - \hat{\bf U}^{\text{ex}}_i(t_{n+1}) \| 
\\
&=
(1- \theta^{\rho}_i) \|( (\hat{\bf U}_1)^{n+1}_i - (\hat{\bf U}_p)^{n+1}_i \| 
+
  \mathcal{O}((h^x)^p)
\\
&=
\mathcal{O}((h^x)^p).
\end{split}
\end{equation}

Case 2. We now assume that  $\theta^{\text{IE}}_i < \theta^{\rho}_i$. Note that the internal energy $\text{IE}(\hat{\bf U}_i^{n+1}(\theta^{\rho}_i))$ is defined at $i$, because ${\rho}_i^{n+1}(\theta^{\rho}_i) \ge \epsilon^{\rho}_i > 0$. 
As in Case 1,  Eq.~\eqref{uTildRho_ordP} holds, because $\theta^{\rho}_i < 1$.  Furthermore, if $\theta^{\rho}_i = 1$, then $\hat{\bf U}_i^{n+1}(\theta^{\rho}_i) =(\hat{\bf U}_p)^{n+1}_i $, which again implies that Eq.~\eqref{uTildRho_ordP} holds.
 Using  Eq.~\eqref{uTildRho_ordP} yields
 \begin{equation}
 \label{IEordp}
\begin{split}
\text{IE}(\hat{\bf U}_i^{n+1}(\theta^{\rho}_i)) 
&=
\rho_i^{n+1}(\theta^{\rho}_i)
\E_i^{n+1}(\theta^{\rho}_i)
-
\frac{\rho_i^{n+1}(\theta^{\rho}_i)}{2}
\| \vec{\bfnc{V}}_i^{n+1}(\theta^{\rho}_i)  \|^2
\\
&=
\text{IE}(\hat{\bf U}^{\text{ex}}_i(t_{n+1})) + \mathcal{O}((h^x)^p),
\end{split}
\end{equation}
where $\E_i^{n+1}(\theta^{\rho}_i)$ is the specific total energy of $\hat{\bf U}_i^{n+1}(\theta^{\rho}_i)$.
Since
$\theta^{\text{IE}}_i < \theta^{\rho}_i$,
$\text{IE}(\hat{\bf U}_i^{n+1}(\theta^{\rho}_i)) 
< 
\epsilon^{\text{IE}}_i
\leq 
\text{IE}(\hat{\bf U}^{\text{ex}}_i(t_{n+1})).
$
Therefore, from Eq.~\eqref{IEordp} it follows that  
$
\text{IE}(\hat{\bf U}_i^{n+1}(\theta^{\rho}_i)) 
= 
\epsilon^{\text{IE}}_i + \mathcal{O}((h^x)^p)
$.
Using Eq.~\eqref{IE_convex} for $0<\theta < \theta^{\rho}_i$, we have
\begin{equation}
\hat{\bf U}^{n+1}_i(\theta) 
= \frac{\theta}{\theta^{\rho}_i} \hat{\bf U}^{n+1}_i(\theta^{\rho}_i) 
+ \left(1-\frac{\theta}{\theta^{\rho}_i}\right)(\hat{\bf U}_1)^{n+1}_i.
\end{equation} 
Again, $\hat{\bf U}^{n+1}_i(\theta)$ may have non-positive internal energy, but it has positive density.  Hence, for all $\theta \in (0, \theta^{\rho}_i)$, $\text{IE}(\hat{\bf U}^{n+1}_i(\theta))$ is defined at $i$ and the following bound holds:
\begin{equation}
\label{concave_IE_L}
\begin{array}{ll}
\text{IE}(\hat{\bf U}^{n+1}_i(\theta))
&=
\frac{\theta}{\theta^{\rho}_i} 
\text{IE}(\hat{\bf U}^{n+1}_i(\theta^{\rho}_i))
+
\left(1-\frac{\theta}{\theta^{\rho}_i}\right)
\text{IE}((\hat{\bf U}^{n+1}_1)_i)
\\
&+
\frac{
\rho^{n+1}_i(\theta^{\rho}_i) ({\rho}^{n+1}_1)_i
\left \|   
(\vec{\bfnc{V}}^{n+1}_1)_i - \vec{\bfnc{V}}^{n+1}_i(\theta^{\rho}_i) 
 \right \|^2
\frac{\theta}{\theta^{\rho}_i}\left(1-\frac{\theta}{\theta^{\rho}_i}\right)
}
{2\rho^{n+1}_i(\theta)}
\\
&
\geq
\frac{\theta}{\theta^{\rho}_i} 
\text{IE}(\hat{\bf U}^{n+1}_i(\theta^{\rho}_i))
+
\left(1-\frac{\theta}{\theta^{\rho}_i}\right)
\text{IE}((\hat{\bf U}^{n+1}_1)_i)
.
\end{array}
\end{equation}
 Note that there exists a unique 
$\theta^*_i \in (0,\theta^{\rho}_i)$ such that
\begin{equation}
\label{IEstar}
\frac{\theta^*_i}{\theta^{\rho}_i} 
\text{IE}(\hat{\bf U}^{n+1}_i(\theta^{\rho}_i))
+ \left(1-\frac{\theta^*_i}{\theta^{\rho}_i}\right)
\text{IE}((\hat{\bf U}_1)^{n+1}_i)
= \epsilon^{\text{IE}}_i.
\end{equation}
From Eq.~\eqref{concave_IE_L} it follows that
$\text{IE}(\hat{\bf U}^{n+1}_i(\theta^*_i)) \geq \epsilon^{\text{IE}}_i$ and according to Lemma~\ref{EPSIE}, 
$\theta^*_i \leq \theta^{\text{IE}}_i$.  
Using Eq.~\eqref{IEstar} and Eq.~\eqref{IEordp}, we have
\begin{equation}
\label{THETAstar}
1 - \frac{\theta^*_i}{\theta^{\rho}_i}
=
\frac{
\epsilon^{\text{IE}}_i - \text{IE}(\hat{\bf U}^{n+1}_i(\theta^{\rho}_i))
}
{
\text{IE}((\hat{\bf U}_1)^{n+1}_i) - \text{IE}(\hat{\bf U}^{n+1}_i(\theta^{\rho}_i))
}
=
\frac{\mathcal{O}((h^x)^p)
}
{\mathcal{O}((h^x))}
=\mathcal{O}((h^x)^{p-1}).
\end{equation}
Equations~\eqref{theta_rho_orderpm1} and (\ref{THETAstar}) yield
$
1 - \theta^*_i = \mathcal{O}((h^x)^{p-1})
$. 
Since $\theta^*_i \leq \theta^{\text{IE}}_i < 1$, it follows that $1-\theta^{\text{IE}}_i = \mathcal{O}((h^x)^{p-1}) \ \forall i$ and Eq.~(\ref{p-order}) holds.

\subsection{High-order positivity-preserving flux-limiting scheme}
\label{SEMIDISC_LIMSCHEME}    

We now present the semi-discrete form of the high-order positivity-preserving flux limiting scheme which is given by 
\begin{equation}  
\label{EQN_LIMSCHEME} 
\begin{array}{ll}
\frac{d {\bf \hat{\bf U}}}{dt} &= 
\theta^k_f    \left(\frac{d {\bf \hat{\bf U}}}{dt}\right)_p 
+ 
(1-\theta^k_f)\left(\frac{d {\bf \hat{\bf U}}}{dt}\right)_1
+
\left(\frac{d {\bf \hat{\bf U}}}{dt}\right)_{AD},
\\
\left(\frac{d {\bf \hat{\bf U}}}{dt}\right)_p           
&=  
\sum\limits_{ l=1}^{3} 
-
\mathcal{P}^{-1}_{\xi^l} \Delta_{\xi^l}\hat{\bar{{\bf f}}}_l 
+ 
D_{\xi^l}\hat{{\bf f}}^{(v)}_l 
+ 
\mathcal{P}^{-1}_{\xi^l} \hat{{\bf g}}_l ,
\\
\left(\frac{d {\bf \hat{\bf U}}}{dt}\right)_1 
&=  
\sum\limits_{ l=1}^{3} 
-
\mathcal{P}^{-1}_{\xi^l} \Delta_{\xi^l}
\hat{\bar{{\bf f}}}^{(MR)}_l 
+
D_{\xi^l}\hat{{\bf f}}^{(v)}_l 
+ 
\mathcal{P}^{-1}_{\xi^l} 
\hat{{\bf g}}_l ,
\\
\left(\frac{d {\bf \hat{\bf U}}}{dt}\right)_{AD} 
&= 
\sum\limits_{ l=1}^{3}
\mathcal{P}^{-1}_{\xi^l} \Delta_{\xi^l}
\left[	
(1-\theta_f^k)
\hat{\bar{{\bf f}}}^{(AD_1)}_{\hat{\bar{\sigma}},l}
+
\hat{\bar{{\bf f}}}^{(AD_1)}_l
\right]
+
D_{\xi^l}\hat{{\bf f}}^{(AD_p)}_l
\\
&
\quad	\quad \quad	\quad
+ 
\mathcal{P}^{-1}_{\xi^l}
\left[
\hat{{\bf g}}_l^{(AD_1)}
+ 
\hat{{\bf g}}_l^{(AD_p)}
\right],
\end{array}
\end{equation} 
where the flux limiter $\theta^k_f $ $(0 \le \theta^k_f \le 1)$ is a constant computed independently in each element and $\hat{\bar{{\bf f}}}^{(MR)}_l $ is the first-order Merriam-Roe entropy dissipative flux \cite{MER, UY_3Dlow}.  Note that the flux limiting is only applied to the inviscid terms and the mass diffusion term required for positivity of density.  The term $\left(\frac{d {\bf \hat{\bf U}}}{dt}\right)_p $ is the baseline high-order scheme with no artificial dissipation, where $\hat{{\bf g}}_l$ represents both the inviscid and viscous penalties (see Section~\ref{SCRNS}).  The $AD_p$ terms are discussed in Section~\ref{SCRNS_highBREN} and the remaining first-order terms are presented in the companion paper \cite{UY_3Dlow}.

\subsection{Conservation}
\label{Cons}
Since $ \theta^k_f $ is computed independently on each element, it is not immediately clear that the scheme given by Eq.~\eqref{EQN_LIMSCHEME} is conservative  for all $0 \leq  \theta^k_f \leq 1 $.   Let us show that the scheme is indeed conservative.
\begin{theorem}
\label{SEMIDISC_LIMSCHEME_CONSERVATIVE}
The high-order positivity--preserving flux-limiting scheme given by Eq.~\eqref{EQN_LIMSCHEME} is conservative for all $0 \leq  \theta^k_f \leq 1 $.
\end{theorem}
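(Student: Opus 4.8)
The plan is to recast the right-hand side of Eq.~\eqref{EQN_LIMSCHEME} as (minus) the discrete divergence, in telescopic flux form, of a single set of directional fluxes defined at the flux points, and to verify that those fluxes are \emph{single-valued at every inter-element face} even though $\theta^k_f$ is computed element-by-element. Equivalently, after multiplying Eq.~\eqref{EQN_LIMSCHEME} by the quadrature weights $\mathcal{P}_{ijk}$ and summing over all solution points of all elements, I will show that every inter-element face term cancels, so that $\frac{d}{dt}\sum_{ijk,\,\text{all elements}}\mathcal{P}_{ijk}\hat{\bf U}_{ijk}$ equals only the physical-boundary penalty contribution carried by $\hat{\bf g}^{(BC)}_l$.

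First I would regroup the three pieces $\theta^k_f(d\hat{\bf U}/dt)_p+(1-\theta^k_f)(d\hat{\bf U}/dt)_1+(d\hat{\bf U}/dt)_{AD}$. Since the viscous terms $D_{\xi^l}\hat{\bf f}^{(v)}_l$ and the penalties $\mathcal{P}^{-1}_{\xi^l}\hat{\bf g}_l$ occur identically in $(d\hat{\bf U}/dt)_p$ and $(d\hat{\bf U}/dt)_1$ and $\theta^k_f+(1-\theta^k_f)=1$, they survive with coefficient one; the inviscid terms coalesce into the single flux-form term $-\sum_l\mathcal{P}^{-1}_{\xi^l}\Delta_{\xi^l}\hat{\bar{\bf f}}^{(\theta)}_l$ with $\hat{\bar{\bf f}}^{(\theta)}_l:=\theta^k_f\hat{\bar{\bf f}}_l+(1-\theta^k_f)\hat{\bar{\bf f}}^{(MR)}_l$; and the $(d\hat{\bf U}/dt)_{AD}$ block is already in flux/SBP form. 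For every flux-form term one invokes the telescopic property of $\mathcal{P}^{-1}_{\xi^l}\Delta_{\xi^l}$, and for the $D_{\xi^l}=\mathcal{P}^{-1}_{\xi^l}\mathcal{Q}_{\xi^l}$ terms the SBP identity ${\bf 1}^\top\mathcal{Q}_{\xi^l}={\bf 1}^\top B_{\xi^l}$ together with consistency $\mathcal{Q}_{\xi^l}{\bf 1}={\bf 0}$; in each case the element-wise weighted sum collapses to a difference of face values of the relevant flux. The whole question then reduces to whether the face values presented by the two elements sharing a face agree.

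The crux is that the element-dependent limiter cannot create a face mismatch, because the $\theta^k_f$-dependent part of each limited flux vanishes at the element-boundary flux points. Indeed, at $\bar i\in\{0,N\}$ the inviscid flux-point value is the one-sided consistent value, so $\hat{\bar{\bf f}}^{(\theta)}_l(\vec{\xi}_{\bar i})=\theta^k_f\,\bar f_{(S)}({\bf U}_{\bar i},{\bf U}_{\bar i})\hat{\vec{\bf a}}^l+(1-\theta^k_f)\,\bar f^{(MR)}({\bf U}_{\bar i},{\bf U}_{\bar i})\hat{\vec{\bf a}}^l=\bar f_{(S)}({\bf U}_{\bar i},{\bf U}_{\bar i})\hat{\vec{\bf a}}^l$, because $\bar f^{(MR)}$ is consistent and hence coincides with $\bar f_{(S)}$ when its two arguments are equal; this value is independent of $\theta^k_f$. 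Likewise, the first-order Brenner mass-diffusion flux $\hat{\bar{\bf f}}^{(AD_1)}_{\hat{\bar\sigma},l}$ is of diffusion type and therefore vanishes when its two arguments coincide, so its element-boundary flux-point value is ${\bf 0}$, again independent of $\theta^k_f$; and $\hat{\bar{\bf f}}^{(AD_1)}_l$, $\hat{\bf f}^{(AD_p)}_l$ and all the penalty terms enter with coefficient one. Hence, at every inter-element face the limited scheme presents exactly the same data — boundary flux-point values and (unlimited) interface penalties — as the baseline high-order scheme of Eq.~\eqref{semiDiscCurviBaseline_highBREN} supplemented by the first-order corrections of \cite{UY_3Dlow}. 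Since the interface penalties $\hat{\bf g}^{(Int)}_l$, $\hat{\bf g}^{(AD_1)}_l$ and $\hat{\bf g}^{(AD_p)}_l$ are constructed in \cite{CFNF,UY,UY_3Dlow} so that these baseline face data are single-valued and therefore cancel in pairs when summed over the mesh, they still cancel here for every $0\le\theta^k_f\le1$; only the physical-boundary penalties remain, which establishes the theorem.

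The step I expect to be the main obstacle is the bookkeeping for the artificial-dissipation block: the first-order mass-diffusion flux is weighted by $(1-\theta^k_f)$ rather than by $\theta^k_f$, and one must check carefully — using the LDG-type treatment of the entropy-variable gradients and the associated interface penalties — that no residual $\theta^k_f$-weighted contribution survives at any face, i.e., that the combined face contribution of $(d\hat{\bf U}/dt)_{AD}$ is genuinely $\theta^k_f$-independent. Once that is verified, the rest is the routine telescoping/SBP algebra already established in the cited works.
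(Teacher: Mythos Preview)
Your approach is essentially the paper's: isolate the $\theta^k_f$-weighted pieces (only $\hat{\bar{\bf f}}_l$, $\hat{\bar{\bf f}}^{(MR)}_l$, and $\hat{\bar{\bf f}}^{(AD_1)}_{\hat{\bar\sigma},l}$ carry the limiter), telescope, observe that the $\theta^k_f$-dependence disappears at the element-boundary flux points because $\hat{\bar{\bf f}}_l$ and $\hat{\bar{\bf f}}^{(MR)}_l$ agree there by consistency, and then inherit conservation from the baseline scheme.

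The one place your justification is off is the mass-diffusion term. Your reason---``diffusion type, vanishes when its two arguments coincide''---does not apply at an interface, where the two states feeding a boundary flux point would come from \emph{different} elements and need not coincide. The paper sidesteps this entirely by using the structural fact that $\hat{\bar{\bf f}}^{(AD_1)}_{\hat{\bar\sigma},l}$ is, by construction, defined \emph{only at interior flux points} (cf.\ Section~\ref{VISC_LIMSCHEME}); hence its telescopic sum on each element contributes nothing at $\bar i\in\{0,N\}$, and the $(1-\theta^k_f)$ weight in front of it is irrelevant for conservation. This disposes of the ``main obstacle'' you anticipate in your final paragraph with no LDG bookkeeping required.
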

\begin{proof}
Collecting like terms in Eq.~\eqref{EQN_LIMSCHEME} shows that $\theta^k_f$ only affects the values of
$\hat{\bar{{\bf f}}}_l$,   $\hat{\bar{{\bf f}}}^{(MR)}_l$, and  $\hat{\bar{{\bf f}}}^{(AD_1)}_{\hat{\bar{\sigma}},l}$
on the $k$th element.  The artificial dissipation flux $\hat{\bar{{\bf f}}}^{(AD_1)}_{\hat{\bar{\sigma}},l}$ is only defined at the interior flux points, which immediately implies that the corresponding telescopic flux differencing term is globally conservative.  Therefore, we have
\begin{equation}
\begin{array}{ll} 
&
\sum\limits_{ l=1}^{3}  
{\bf 1}_1^\top
\mathcal{P}
\mathcal{P}^{-1}_{\xi^l}\Delta_{\xi^l}
\left[   
\theta^k_f 
\hat{\bar{{\bf f}}}_l
+
(1-\theta^k_f)
\hat{\bar{{\bf f}}}^{(MR)}_l
\right]
\\
&=
\sum\limits_{ j,k=1}^{N}
\mathcal{P}_{jk}
\sum\limits_{ i=1}^{N}
\left[   
\theta^k_f 
\left( 
\hat{\bar{{\bf f}}}_1(\vec{\xi}_{\bar{i}jk})
-
\hat{\bar{{\bf f}}}_1(\vec{\xi}_{\bar{i}-1jk})
\right)  \right.
\\
&  \quad \quad   \quad \quad  \left.
+
(1-\theta^k_f)
\left(
\hat{\bar{{\bf f}}}^{(MR)}_1(\vec{\xi}_{\bar{i}jk})
-
\hat{\bar{{\bf f}}}^{(MR)}_1(\vec{\xi}_{\bar{i}-1jk})
\right)
\right]  + \cdots
\\
&=
\sum\limits_{ j,k=1}^{N}
\mathcal{P}_{jk}
\left[   
\theta^k_f 
\left( 
\hat{\bar{{\bf f}}}_1(\vec{\xi}_{Njk})
-
\hat{\bar{{\bf f}}}_1(\vec{\xi}_{1jk})
\right)  \right.
\\
&  \quad \quad   \quad \quad  \left.
+
(1-\theta^k_f)
\left(
\hat{\bar{{\bf f}}}^{(MR)}_1(\vec{\xi}_{Njk})
-
\hat{\bar{{\bf f}}}^{(MR)}_1(\vec{\xi}_{1jk})
\right)
\right]  + \cdots
\\
&=
\sum\limits_{ j,k=1}^{N}
\mathcal{P}_{jk}
\left[   
\hat{\bar{{\bf f}}}_1(\vec{\xi}_{Njk})
-
\hat{\bar{{\bf f}}}_1(\vec{\xi}_{1jk})
\right]  
\\
&   
+ 
\sum\limits_{ i,k=1}^{N}
\mathcal{P}_{ik} 
\left[   
\hat{\bar{{\bf f}}}_2(\vec{\xi}_{iNk})
-
\hat{\bar{{\bf f}}}_2(\vec{\xi}_{i1k})
\right]
+ 
\sum\limits_{ i,j=1}^{N}
\mathcal{P}_{ij}
\left[   
\hat{\bar{{\bf f}}}_3(\vec{\xi}_{ijN})
-
\hat{\bar{{\bf f}}}_3(\vec{\xi}_{ij1})
\right]
\\
&
=
\sum\limits_{ l=1}^{3}  
{\bf 1}_1^\top
\mathcal{P}
\mathcal{P}^{-1}_{\xi^l}\Delta_{\xi^l}
\hat{\bar{{\bf f}}}_l.
\end{array}
\end{equation}
Hence, conservation of the flux-limiting scheme given by Eq.~\eqref{EQN_LIMSCHEME} follows directly from conservation of the baseline scheme given by Eq.~\eqref{semiDiscCurviBaseline}.
\end{proof}

\subsection{Artificial viscosity for the flux-limiting scheme}
\label{VISC_LIMSCHEME}   

We now present how the artificial viscosity is constructed for the flux-limiting scheme given by Eq.~\eqref{EQN_LIMSCHEME}.
If an element is flagged for flux limiting (i.e, $\theta^k_f < 1$), then only the first-order dissipation is used for this element, even if it is later determined that $\theta^k_f = 1$.  Any element flagged for  flux limiting is herein referred to as `` a limited element."

The artificial viscosity coefficient $\bfnc{\mu}^{AD}$ presented in  Chapter~\ref{AV} is used to construct the first-order Brenner dissipation defined at the flux points, $\bar{\bfnc{\mu}}^{AD}_1$, and the $p$th-order Brenner dissipation calculated at the solution points, $\bfnc{\mu}^{AD}_p$.  Let $ V^k_1,V^k_2, \ldots, V^k_8$ be 8 vertices of the $k$th hexahedral element. Define an indicator function, $\chi(\cdot)$, such that $\chi(V^k_l) = 1$ if $V^k_l$ is collocated with any limited elements, otherwise,  $\chi(V^k_l) = 0$.  Then, set $\bfnc{\mu}^{AD}_p(V^k_l) = \bfnc{\mu}^{AD}(V^k_l)(1-\chi(V^k_l))$ and use the tri-linear interpolation to obtain $\bfnc{\mu}^{AD}_p$ at the remaining solution points.   Note that for elements flagged for limiting,  $\bfnc{\mu}^{AD}_p = 0$ and only the first-order Brenner artificial dissipation is used.  For all fixed $1 \leq j,k \leq N$, the first-order dissipation at $\vec{\xi}_{i} = \vec{\xi}_{ijk}$ is formed as follows:
\begin{equation}
\begin{array}{l}
\bar{\bfnc{\mu}}^{AD}_1(\vec{\xi}_{\bar{i}}) 
=
\frac 12 \left(
\bfnc{\mu}^{AD}(\vec{\xi}_{i})  + \bfnc{\mu}^{AD}(\vec{\xi}_{i+1})
- \bfnc{\mu}^{AD}_p(\vec{\xi}_{i})  - \bfnc{\mu}^{AD}_p(\vec{\xi}_{i+1}) \right), \, 1 \leq i \leq N-1,
\\
\bar{\bfnc{\mu}}^{AD}_1(\vec{\xi}_{\bar{0}}) = 
\bfnc{\mu}^{AD}(\vec{\xi}_1)
-
\bfnc{\mu}^{AD}_p(\vec{\xi}_1)
,
\quad
\bar{\bfnc{\mu}}^{AD}_1(\vec{\xi}_{\bar{N}}) = 
\bfnc{\mu}^{AD}(\vec{\xi}_N)
-
\bfnc{\mu}^{AD}_p(\vec{\xi}_N).
\end{array}
\end{equation}
Identical formulas are used for the other spatial directions.
For the first-order artificial dissipation, the $c_{\rho}$ and $c_T$ coefficients are set equal to those of the $p$th-order counterpart (see Section~\ref{BNS}).  
  
The mass diffusion coefficient for the first-order artificial dissipation flux is set to be proportional to $\bar{\bfnc{\sigma}}^{AD}_1$.
Thus, for all fixed $1 \leq j,k \leq N$, the mass diffusion at $\vec{\xi}_{i} = \vec{\xi}_{ijk}$ is given by
\begin{equation}
\begin{array}{l}
\label{sigma_LIM}
\bar{\bfnc{\sigma}}^{AD}_1(\vec{\xi}_{\bar{i}}) = 
\max
\left(
\chi(\vec{\xi}_{\bar{i}})
\left( \delta_{0,i} +  \delta_{N,i} \right)
\bar{\bfnc{\sigma}}_{\min}(\vec{\xi}_{\bar{i}})
,
c_{\rho}
\frac{
\bar{\bfnc{\mu}}^{AD}_1(\vec{\xi}_{\bar{i}}) 
}
{
\sqrt{\bfnc{\rho}(\vec{\xi}_{i}) \bfnc{\rho}(\vec{\xi}_{i+1})}
}
\right),  \text{for} \, \, 0 \leq i \leq N,
\end{array}
\end{equation}
where $\bfnc{\rho}(\vec{\xi}_{0})$ and $\bfnc{\rho}(\vec{\xi}_{N+1})$ are densities at element interfaces or physical boundaries.  Identical definitions are used for the other spatial directions.  
As follows from Eq.~(\ref{sigma_LIM}),  the first-order artificial mass viscosity at every interface collocated with a limited element is always greater than or equal to the minimum mass diffusion,  $\bar{\bfnc{\sigma}}_{\min}$, required to guarantee the density positivity for the first-order scheme with the explicit Euler discretization in time. The exact formula for $\bar{\bfnc{\sigma}}_{\min}$ is presented  in the companion paper \cite{UY_3Dlow}.

If there exists at least one solution point on a given element, which would otherwise not have positive density, we also require that the mass diffusion coefficient for all interior flux points be sufficient for positivity.  This is achieved by increasing the mass diffusion used for $\hat{\bar{{\bf f}}}^{(AD_1)}_{\hat{\bar{\sigma}},l}$ in Eq.~\eqref{EQN_LIMSCHEME} as follows: 
$
\hat{\bar{\bfnc{\sigma}}}_1(\vec{\xi}_{\bar{i}}) = \max
\left(
\bar{\bfnc{\sigma}}_{\min}(\vec{\xi}_{\bar{i}})
-
\bar{\bfnc{\sigma}}^{AD}_1(\vec{\xi}_{\bar{i}})
, 0 \right).
$

\subsection{Entropy stability}
Let us show that the high-order positivity--preserving flux-limiting semi-discrete scheme given by Eq.~\eqref{EQN_LIMSCHEME} is entropy stable.
\begin{theorem}
\label{TH_ENT_STAB}
The high-order positivity--preserving flux-limiting semi-discrete scheme given by Eq.~\eqref{EQN_LIMSCHEME} is entropy stable.
\end{theorem}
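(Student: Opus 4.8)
The plan is to derive a discrete entropy balance by contracting the semi-discrete scheme~\eqref{EQN_LIMSCHEME} with ${\bf w}^\top\mathcal{P}$, where ${\bf w}$ is the vector of entropy variables at the solution points, and summing over all grid elements. For static grids one has ${\bf w}^\top\hat{\bf U}_t = (J\mathcal{S})_t$ pointwise, so ${\bf w}^\top\mathcal{P}\,\hat{\bf U}_t$ summed over the mesh is the time derivative of the total discrete entropy $\sum_{ijk}\mathcal{P}_{ijk}(J\mathcal{S})_{ijk}$, the discrete analogue of Eq.~\eqref{eq:Eineq.3}. Because $\theta^k_f$ is a single scalar on element $k$, the contraction distributes linearly over the three pieces on the right-hand side of~\eqref{EQN_LIMSCHEME},
\[
{\bf w}^\top\mathcal{P}\,\frac{d\hat{\bf U}}{dt}\Big|_k
= \theta^k_f\,{\bf w}^\top\mathcal{P}\left(\frac{d\hat{\bf U}}{dt}\right)_p\Big|_k
+ (1-\theta^k_f)\,{\bf w}^\top\mathcal{P}\left(\frac{d\hat{\bf U}}{dt}\right)_1\Big|_k
+ {\bf w}^\top\mathcal{P}\left(\frac{d\hat{\bf U}}{dt}\right)_{AD}\Big|_k ,
\]
so it suffices to treat the three pieces separately and then sum over $k$.

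For the first two pieces I would invoke the discrete entropy analysis of~\cite{CFNF} for the baseline high-order Navier--Stokes scheme and of~\cite{UY_3Dlow} for the first-order scheme. Contracting $(d\hat{\bf U}/dt)_p$ with ${\bf w}^\top\mathcal{P}$ on element $k$ gives $-G_{\partial k}+D^k_p$, where $-G_{\partial k}$ collects the surface and interface entropy-flux contributions (those built from the boundary flux-point values $\hat{\bar{\bf f}}_l(\vec{\xi}_{\bar 0}),\hat{\bar{\bf f}}_l(\vec{\xi}_{\bar N})$ and the penalties $\hat{\bf g}_l$) while $D^k_p\le 0$ is the remaining interior viscous entropy dissipation (non-positive by the SPSD property of $[c^{(v)}_{m,j}]$). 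The key point---mirroring the conservation argument of Theorem~\ref{SEMIDISC_LIMSCHEME_CONSERVATIVE}---is that $(d\hat{\bf U}/dt)_1$ produces the \emph{same} $-G_{\partial k}$: both sub-schemes carry the identical penalties $\hat{\bf g}_l$; at the boundary flux points $\bar i\in\{0,N\}$ the high-order flux~\eqref{PTH_ECFLUX} and the first-order Merriam--Roe flux $\hat{\bar{\bf f}}^{(MR)}_l$ both reduce to the consistent flux $\bar{f}_{(S)}({\bf U}(\vec{\xi}_{\bar i}),{\bf U}(\vec{\xi}_{\bar i}))\,\hat{\vec{\bf a}}^m(\vec{\xi}_{\bar i})$; and the interior entropy-potential telescoping is identical for the two inviscid operators, since $\hat{\bar{\bf f}}^{(MR)}_l$ equals an entropy-conservative base flux minus an SPSD dissipation supported only at interior flux points. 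Hence ${\bf w}^\top\mathcal{P}(d\hat{\bf U}/dt)_1|_k=-G_{\partial k}+D^k_1$ with $D^k_1\le 0$ (the Merriam--Roe dissipation plus the viscous dissipation), so that
\[
\theta^k_f\,{\bf w}^\top\mathcal{P}\left(\frac{d\hat{\bf U}}{dt}\right)_p\Big|_k
+ (1-\theta^k_f)\,{\bf w}^\top\mathcal{P}\left(\frac{d\hat{\bf U}}{dt}\right)_1\Big|_k
= -G_{\partial k} + \theta^k_f D^k_p + (1-\theta^k_f)D^k_1 ,
\]
and $\theta^k_f D^k_p+(1-\theta^k_f)D^k_1\le 0$ because $0\le\theta^k_f\le 1$.

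For the artificial-dissipation piece I would verify that each constituent of $(d\hat{\bf U}/dt)_{AD}$ is entropy dissipative: the $p$th-order Brenner term $D_{\xi^l}\hat{\bf f}^{(AD_p)}_l$ together with $\hat{\bf g}^{(AD_p)}_l$, by the SPSD property of $[c^{(B)}_{m,j}]$ established in Section~\ref{SCRNS_highBREN}; and the first-order Brenner terms $\hat{\bar{\bf f}}^{(AD_1)}_l$ and $\hat{\bar{\bf f}}^{(AD_1)}_{\hat{\bar{\sigma}},l}$ (the latter being the mass-diffusion part of the Brenner regularization, cf.~Eq.~\eqref{eq:FB}), together with $\hat{\bf g}^{(AD_1)}_l$, by the construction of the first-order positivity-preserving entropy stable scheme in~\cite{UY_3Dlow}; the prefactor $(1-\theta^k_f)\ge 0$ on $\hat{\bar{\bf f}}^{(AD_1)}_{\hat{\bar{\sigma}},l}$ leaves the dissipative sign unchanged. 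Consequently $\sum_k{\bf w}^\top\mathcal{P}(d\hat{\bf U}/dt)_{AD}|_k\le 0$, the corresponding surface contributions being either cancelled by the single-valued-flux mechanism on interior faces or non-positive under the entropy-stable treatment of the physical boundaries.

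Summing the element balances, the common surface terms $-G_{\partial k}$ telescope over every interior face---because the numerical flux and the penalties there are single-valued---leaving only the physical-boundary entropy flux, which is non-positive by the assumed entropy-stable boundary conditions; the remaining contributions $\theta^k_f D^k_p+(1-\theta^k_f)D^k_1$ and the artificial-dissipation term are non-positive term by term. Therefore $\frac{d}{dt}\sum_{ijk}\mathcal{P}_{ijk}(J\mathcal{S})_{ijk}\le 0$, which is the asserted entropy stability. The main obstacle is the middle step: making precise that the element-dependent limiter $\theta^k_f$ rescales only the sign-definite \emph{interior} dissipation and cannot create an entropy-flux mismatch across a face shared by two elements with different values of $\theta^k_f$; this reduces, exactly as in Theorem~\ref{SEMIDISC_LIMSCHEME_CONSERVATIVE}, to the facts that the high-order and first-order inviscid fluxes coincide (with the consistent flux) at the element boundary flux points and that both sub-schemes use the same SAT penalties, so that $G_{\partial k}$ is independent of $\theta^k_f$.
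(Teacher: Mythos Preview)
Your proposal is correct and follows essentially the same approach as the paper. The paper's proof is more terse: it directly invokes Lemma~1 of the companion paper~\cite{UY_3Dlow} to assert that the entropy contributions of the high-order flux $\hat{\bar{\bf f}}_l$ and the first-order entropy-conservative flux $\hat{\bar{\bf f}}^{(EC)}_l$ (where $\hat{\bar{\bf f}}^{(MR)}_l=\hat{\bar{\bf f}}^{(EC)}_l-\hat{\bar{\bf f}}^{(ED)}_l$) are \emph{equal} on each element, so that ${\bf w}^\top\mathcal{P}\mathcal{P}^{-1}_{\xi^l}\Delta_{\xi^l}[\theta^k_f\hat{\bar{\bf f}}_l+(1-\theta^k_f)\hat{\bar{\bf f}}^{(EC)}_l]={\bf w}^\top\mathcal{P}\mathcal{P}^{-1}_{\xi^l}\Delta_{\xi^l}\hat{\bar{\bf f}}_l$ for all $\theta^k_f$, after which entropy stability reduces to the already-established results for the baseline scheme~\cite{CFNF} and the first-order dissipation~\cite{UY_3Dlow}. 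Your argument unpacks this same mechanism explicitly---identifying the common surface term $-G_{\partial k}$ via the coincidence of the two inviscid fluxes at the boundary flux points and the shared penalties $\hat{\bf g}_l$---which is precisely the content of that lemma; the remainder of your treatment of the viscous and artificial-dissipation terms matches the paper's citations.
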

\begin{proof}
Entropy stability of the high-order viscous terms is proven in \cite{CFNF}.  It has been proven in our companion paper \cite{UY_3Dlow} that
the first-order artificial dissipation terms are entropy dissipative.
However, the high- and low-order inviscid entropy conservative terms must be considered together 
to account for the contribution of $ \theta^k_f $.
Lemma~1 in \cite{UY_3Dlow}
equates the entropy contributions of $\hat{\bar{{\bf f}}}_l$  and $\hat{\bar{{\bf f}}}^{(EC)}_l$
where $\hat{\bar{{\bf f}}}^{(MR)}_l = \hat{\bar{{\bf f}}}^{(EC)}_l - \hat{\bar{{\bf f}}}^{(ED)}_l$.  Therefore,
\begin{equation}
\begin{aligned}
\sum\limits_{ l=1}^{3}  
{\bf w}^\top
\mathcal{P}
\mathcal{P}^{-1}_{\xi^l}\Delta_{\xi^l} 
\left[   
\theta^k_f 
\hat{\bar{{\bf f}}}_l
+
(1-\theta^k_f)
\hat{\bar{{\bf f}}}^{(EC)}_l
\right]
=
{\bf w}^\top
\mathcal{P}
\mathcal{P}^{-1}_{\xi^l}\Delta_{\xi^l}\hat{\bar{{\bf f}}}_l  
\end{aligned}
\end{equation}
for all  $0 \leq  \theta^k_f \leq 1$.
 Thus, 
the $\theta^k_f\hat{\bar{{\bf f}}}_l + (1-\theta^k_f)\hat{\bar{{\bf f}}}^{(EC)}_l$ flux is entropy conservative, 
which follows directly from the fact that the high-order flux $\hat{\bar{{\bf f}}}_l$ is entropy conservative, which is proven in \cite{CFNF, CFNPSY}.
\end{proof}

\subsection{Freestream Preservation}
For curvilinear meshes, freestream preservation is an important property that is not guaranteed automatically.    
\begin{theorem}
\label{thm:freePresMix}
The high-order positivity--preserving flux-limiting scheme given by Eq.~\eqref{EQN_LIMSCHEME} is freestream preserving.  
\end{theorem}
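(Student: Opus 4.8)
The plan is to evaluate the right-hand side of Eq.~\eqref{EQN_LIMSCHEME} on a uniform freestream state ${\bf U}\equiv{\bf U}_\infty$, with boundary and interface data taken consistent with ${\bf U}_\infty$, and to show that it vanishes identically, independently of the element-wise limiter $0\le\theta^k_f\le1$. Since $\tfrac{d{\bf \hat U}}{dt}$ is written as the $\theta^k_f$-weighted combination $\theta^k_f\left(\tfrac{d{\bf \hat U}}{dt}\right)_p+(1-\theta^k_f)\left(\tfrac{d{\bf \hat U}}{dt}\right)_1+\left(\tfrac{d{\bf \hat U}}{dt}\right)_{AD}$, it suffices to show that each of these three groups vanishes separately on the freestream; the weights are then irrelevant and freestream preservation follows for all admissible $\theta^k_f$.

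First I would dispose of all dissipative contributions. On the constant state the entropy variables ${\bf w}$ are constant, so $D_{\xi^l}{\bf w}=0$ and every jump $\Delta_1{\bf w}$ in Eq.~\eqref{gradENTVARS} vanishes; hence ${\bf\Theta}_{x^j}=0$, and by Eqs.~\eqref{physicalVISCFLUX} and \eqref{brennerVISCFLUX} the physical viscous fluxes $\hat{{\bf f}}^{(v)}_l$, the high-order Brenner fluxes $\hat{{\bf f}}^{(AD_p)}_l$, and the first-order Brenner fluxes $\hat{\bar{{\bf f}}}^{(AD_1)}_l$ are all zero. The remaining artificial-dissipation flux $\hat{\bar{{\bf f}}}^{(AD_1)}_{\hat{\bar{\sigma}},l}$ is the mass-diffusion term, which is proportional to discrete derivatives of $\rho$; since $\rho$ is constant it vanishes as well (here one invokes that the first-order mass-diffusion operator of \cite{UY_3Dlow}, together with the choice of $\bar{\bfnc{\sigma}}^{AD}_1$ in Eq.~\eqref{sigma_LIM} built on the GCL-consistent metrics, is itself freestream preserving). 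Finally, the penalties $\hat{{\bf g}}_l$, $\hat{{\bf g}}^{(AD_1)}_l$, $\hat{{\bf g}}^{(AD_p)}_l$ are all assembled from jumps between the interior trace and the neighbouring element or boundary state, and therefore vanish for a globally constant solution. Consequently $\left(\tfrac{d{\bf \hat U}}{dt}\right)_{AD}=0$, and the viscous and penalty parts of $\left(\tfrac{d{\bf \hat U}}{dt}\right)_p$ and $\left(\tfrac{d{\bf \hat U}}{dt}\right)_1$ drop out as well.

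It then remains to treat the inviscid pieces $\mathcal{P}^{-1}_{\xi^l}\Delta_{\xi^l}\hat{\bar{{\bf f}}}_l$ in $\left(\tfrac{d{\bf \hat U}}{dt}\right)_p$ and $\mathcal{P}^{-1}_{\xi^l}\Delta_{\xi^l}\hat{\bar{{\bf f}}}^{(MR)}_l$ in $\left(\tfrac{d{\bf \hat U}}{dt}\right)_1$. By consistency of the two-point entropy-conservative flux, Eq.~\eqref{2PT_EC} with $U_1=U_2={\bf U}_\infty$ gives $\bar{f}_{(S)}({\bf U}_\infty,{\bf U}_\infty)=f({\bf U}_\infty)$, so on the freestream the contravariant fluxes of Eq.~\eqref{PTH_ECFLUX} collapse to a linear combination of the consistent physical fluxes with the metric coefficients $\hat{\bf a}^m_l$, and the telescopic difference $\mathcal{P}^{-1}_{\xi^l}\Delta_{\xi^l}(\cdot)$ reduces to the physical fluxes at ${\bf U}_\infty$ multiplied by the discrete metric divergence $\sum_{l}D_{\xi^l}\hat{\bf a}^m_l$, which is zero precisely because the metrics are constructed to satisfy the discrete GCL equations (cf.~\cite{TLGCL}); this is exactly the assertion that the baseline high-order scheme Eq.~\eqref{semiDiscCurviBaseline} is freestream preserving. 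For the Merriam-Roe flux $\hat{\bar{{\bf f}}}^{(MR)}_l=\hat{\bar{{\bf f}}}^{(EC)}_l-\hat{\bar{{\bf f}}}^{(ED)}_l$, the dissipative part $\hat{\bar{{\bf f}}}^{(ED)}_l$ is proportional to solution jumps and hence vanishes on the freestream, while the entropy-conservative part is consistent and, by the same GCL argument, contributes nothing after telescoping — equivalently, one invokes that the first-order scheme of \cite{UY_3Dlow} is freestream preserving. Hence $\left(\tfrac{d{\bf \hat U}}{dt}\right)_p=\left(\tfrac{d{\bf \hat U}}{dt}\right)_1=0$ on the freestream, and therefore $\tfrac{d{\bf \hat U}}{dt}=0$ for every $\theta^k_f\in[0,1]$.

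The only genuinely nontrivial ingredient is the inviscid step — that a flux-differencing operator assembled from a consistent two-point flux and GCL-satisfying discrete metrics annihilates a constant state. This fact is inherited from the baseline high-order scheme (\cite{CFNF,TLGCL}) and from the first-order scheme (\cite{UY_3Dlow}), so the new content here is only the observation that a $\theta^k_f$-weighted combination of two freestream-preserving operators, augmented by dissipation and penalty terms that individually vanish on a constant state, is again freestream preserving — and, crucially, that this holds for every element-wise value of the limiter $\theta^k_f$.
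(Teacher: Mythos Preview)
Your proof is correct and follows essentially the same approach as the paper: you show that all viscous, artificial-dissipation, and penalty terms vanish on a constant state because they depend on jumps or discrete derivatives, and then invoke the known freestream preservation of the high-order flux $\hat{\bar{{\bf f}}}_l$ (from \cite{CFNPSY}) and of the first-order entropy-conservative flux $\hat{\bar{{\bf f}}}^{(EC)}_l$ (from Lemma~1 of \cite{UY_3Dlow}) to handle the inviscid terms. Your write-up is somewhat more explicit about the GCL mechanism than the paper's terse proof, but the structure and the cited ingredients are the same.
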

\begin{proof}
Let us consider a globally constant state with the consistent Dirichlet boundary conditions and show that  
$\frac{d {\bf \hat{\bf U}}}{dt} = {\bf 0}_5$.
Note that all artificial dissipation and viscous terms including the corresponding penalties depend directly on two-point jumps and high-order computational derivatives of the solution, respectively.  Hence, all viscous terms are identically equal to zero. 

Let us show that all inviscid terms are also exactly equal to zero. 
Indeed, the inviscid penalty terms are equal to zero, 
because of the consistency of the Merriam-Roe flux.
Finally,  $\hat{\bar{{\bf f}}}_l$ and $\hat{\bar{{\bf f}}}^{(EC)}_l$ have been proven to be 
freestream preserving in \cite{CFNPSY} and Lemma~1 in the companion paper \cite{UY_3Dlow}, respectively.
\end{proof}

\subsection{$L_1$ stability}  
Let us now show that the high-order positivity-preserving flux-limiting scheme given by Eq.~\eqref{EQN_LIMSCHEME} is $L_1$ stable. 
Since the proposed scheme is conservative (see Section~\ref{Cons}), the global integrals of density and total energy at the $n$th time level can be recast in the 
following form:
\begin{equation}
\label{conservationIMPLIES}
\begin{split}
\sum\limits^K_{k=1}
{\bf 1}_1^\top
\widehat{\mathcal{P}}
\hat{\bfnc{\rho}}_{k}^n
&
= 
\sum\limits^K_{k=1}
{\bf 1}_1^\top
\widehat{\mathcal{P}}
\hat{\bfnc{\rho}}_{k}^0
+
\sum\limits^{n-1}_{i=0}
B_{\rho}^i,
\\
\sum\limits^K_{k=1}
{\bf 1}_1^\top
\widehat{\mathcal{P}}
\widehat{\bf Et}_{k}^n
&
= 
\sum\limits^K_{k=1}
{\bf 1}_1^\top
\widehat{\mathcal{P}}
\widehat{\bf Et}_{k}^0
+
\sum\limits^{n-1}_{i=0}
B_{Et}^i,
\end{split}
\end{equation}
where $\hat{\bfnc{\rho}}_{k}^n$ and $\widehat{\bf Et}_{k}^n$ are the density and total energy scaled by the Jacobian on the $k$th element at time level $n$ and the $B^i$ terms represent the contribution from the boundaries.  In particular, if the boundary conditions are periodic, then $B^i = 0$ for all $i$.  Taking into account that Eq.~\eqref{EQN_LIMSCHEME} is a pointwise positivity-preserving scheme, we now prove the following theorem. 
\begin{theorem}
Assume that the initial condition is in the admissible set, i.e.,  the initial density and temperature at every solution point are positive.  Furthermore, assume for all $n \in \mathbb{N}$, the initial density and total energy satisfy the following bounds:
$$
c^{\rho}_{\min}
\sum\limits^K_{k=1}
{\bf 1}_1^\top
\widehat{\mathcal{P}}
\hat{\bfnc{\rho}}_{k}^0
\leq
\sum\limits^K_{k=1}
{\bf 1}_1^\top
\widehat{\mathcal{P}}
\hat{\bfnc{\rho}}_{k}^0
+
\sum\limits^{n-1}_{i=0}
B_{\rho}^i 
\leq  
c^{\rho}_{\max}
\sum\limits^K_{k=1}
{\bf 1}_1^\top
\widehat{\mathcal{P}}
\hat{\bfnc{\rho}}_{k}^0,
$$
$$
c^{Et}_{\min}
\sum\limits^K_{k=1}
{\bf 1}_1^\top
\widehat{\mathcal{P}}
\widehat{\bf Et}_{k}^0
\leq
\sum\limits^K_{k=1}
{\bf 1}_1^\top
\widehat{\mathcal{P}}
\widehat{\bf Et}_{k}^0
+
\sum\limits^{n-1}_{i=0}
B_{Et}^i
\leq
c^{Et}_{\max}
\sum\limits^K_{k=1}
{\bf 1}_1^\top
\widehat{\mathcal{P}}
\widehat{\bf Et}_{k}^0,
$$
where $c^{Et}_{\min}$, $c^{Et}_{\max}$, $c^{\rho}_{\min}$, $c^{\rho}_{\max}$ are positive constants, such that
$0< c^{Et}_{\min} \leq c^{Et}_{\max}$ and $0< c^{\rho}_{\min} \leq c^{\rho}_{\max}$.
Then, the discrete solution obtained using the high-order positivity-preserving flux-limiting scheme given by Eq.~\eqref{EQN_LIMSCHEME} satisfies the following $L_1$ bounds:  
\begin{equation}
\begin{array}{ccc}
c^{\rho}_{\min}
\sum\limits^K_{k=1}
{\bf 1}_1^\top
\widehat{\mathcal{P}}
\hat{\bfnc{\rho}}_{k}^0
\leq
&
\sum\limits^K_{k=1}
{\bf 1}_1^\top
\widehat{\mathcal{P}}
\left| \hat{\bfnc{\rho}}_{k}^n  \right| 
&
\leq  
c^{\rho}_{\max}
\sum\limits^K_{k=1}
{\bf 1}_1^\top
\widehat{\mathcal{P}}
\hat{\bfnc{\rho}}_{k}^0,
\\
c^{Et}_{\min}
\sum\limits^K_{k=1}
{\bf 1}_1^\top
\widehat{\mathcal{P}}
\widehat{\bf Et}_{k}^0
\leq
&
\sum\limits^K_{k=1}
{\bf 1}_1^\top
\widehat{\mathcal{P}}
\left| \widehat{\bf Et}_{k}^n  \right|
&
\leq
c^{Et}_{\max}
\sum\limits^K_{k=1}
{\bf 1}_1^\top
\widehat{\mathcal{P}}
\widehat{\bf Et}_{k}^0,
\\
c^{Et}_{\min}
\sum\limits^K_{k=1}
{\bf 1}_1^\top
\widehat{\mathcal{P}}
\widehat{\bf Et}_{k}^0
\leq
&
\sum\limits^K_{k=1}
{\bf 1}_1^\top
\widehat{\mathcal{P}}
\left| \widehat{\bf IE}_{k}^n  \right|
&
\leq
c^{Et}_{\max}
\sum\limits^K_{k=1}
{\bf 1}_1^\top
\widehat{\mathcal{P}}
\widehat{\bf Et}_{k}^0,
\\
c^{Et}_{\min}
\sum\limits^K_{k=1}
{\bf 1}_1^\top
\widehat{\mathcal{P}}
\widehat{\bf Et}_{k}^0
\leq
&
\sum\limits^K_{k=1}
{\bf 1}_1^\top
\widehat{\mathcal{P}}
\left| \widehat{\bf KE}_{k}^n  \right|
&
\leq
c^{Et}_{\max}
\sum\limits^K_{k=1}
{\bf 1}_1^\top
\widehat{\mathcal{P}}
\widehat{\bf Et}_{k}^0,
\end{array}
\end{equation}
where $\left| \hat{\bfnc{\rho}}_{k}^n  \right| $, $\left| \widehat{\bf Et}_{k}^n  \right|$, $\left| \widehat{\bf IE}_{k}^n  \right|$, and $\left| \widehat{\bf KE}_{k}^n  \right|$ are arrays of the absolute values of the density, total energy, internal energy, and kinetic energy at all solution points on the $k$th element at time level $n$.
\end{theorem}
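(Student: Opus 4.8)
The plan is to reduce the entire statement to two properties already established: the scheme is \emph{pointwise positivity-preserving}, so that at every solution point of every element and for every $n\in\mathbb{N}$ one has $\hat{\bfnc{\rho}}^n_k>0$ and positive internal energy (this is the content of Lemmas~\ref{EPSRHO}--\ref{EPSIE} and the limiting construction, propagated by induction on $n$ from the admissible initial data and a positivity-preserving time marching); and the scheme is conservative (Theorem~\ref{SEMIDISC_LIMSCHEME_CONSERVATIVE}), so that the telescoped identities~\eqref{conservationIMPLIES} for the Jacobian-scaled density and total energy hold exactly. The elementary observation that removes the absolute values is that $\widehat{\mathcal{P}}$ is diagonal with strictly positive entries, so that ${\bf 1}_1^\top\widehat{\mathcal{P}}\,|\bm a|={\bf 1}_1^\top\widehat{\mathcal{P}}\,\bm a$ for any array $\bm a$ with non-negative entries.

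First I would dispose of the density and total-energy bounds. Pointwise $\hat{\bfnc{\rho}}^n_k>0$, and $\widehat{\bf Et}^n_k=\widehat{\bf IE}^n_k+\widehat{\bf KE}^n_k$ with $\widehat{\bf IE}^n_k>0$ and $\widehat{\bf KE}^n_k\ge0$ (being one half of a density times a squared velocity magnitude, with $J>0$), so $\hat{\bfnc{\rho}}^n_k$ and $\widehat{\bf Et}^n_k$ are non-negative arrays and the two $L_1$ sums in the theorem equal $\sum_k{\bf 1}_1^\top\widehat{\mathcal{P}}\hat{\bfnc{\rho}}^n_k$ and $\sum_k{\bf 1}_1^\top\widehat{\mathcal{P}}\widehat{\bf Et}^n_k$. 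Substituting the conservation identities~\eqref{conservationIMPLIES} and invoking the hypothesised two-sided bounds on $\sum_k{\bf 1}_1^\top\widehat{\mathcal{P}}\hat{\bfnc{\rho}}^0_k+\sum_{i=0}^{n-1}B_\rho^i$ and on $\sum_k{\bf 1}_1^\top\widehat{\mathcal{P}}\widehat{\bf Et}^0_k+\sum_{i=0}^{n-1}B_{Et}^i$ gives the first two lines at once.

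For the internal- and kinetic-energy lines I would use the pointwise inequalities $0<\widehat{\bf IE}^n_k\le\widehat{\bf Et}^n_k$ and $0\le\widehat{\bf KE}^n_k\le\widehat{\bf Et}^n_k$, immediate from $\widehat{\bf Et}^n_k=\widehat{\bf IE}^n_k+\widehat{\bf KE}^n_k$ with both summands non-negative, together with the fact that the $L_1$ norms of these two non-negative arrays again drop the absolute values. Summing either inequality over all solution points and elements and using the already-established upper bound on $\sum_k{\bf 1}_1^\top\widehat{\mathcal{P}}\widehat{\bf Et}^n_k$ yields the upper bounds for $|\widehat{\bf IE}^n_k|$ and $|\widehat{\bf KE}^n_k|$; for the lower bounds I would lean on the exact splitting $\sum_k{\bf 1}_1^\top\widehat{\mathcal{P}}\widehat{\bf Et}^n_k=\sum_k{\bf 1}_1^\top\widehat{\mathcal{P}}\widehat{\bf IE}^n_k+\sum_k{\bf 1}_1^\top\widehat{\mathcal{P}}\widehat{\bf KE}^n_k$ together with the two-sided control on its left-hand side.

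The step I expect to be the main obstacle is precisely those lower bounds on the internal and kinetic energies: unlike the total energy, neither $\widehat{\bf IE}$ nor $\widehat{\bf KE}$ is individually conserved, so a lower bound cannot come from a conservation identity alone, and the naive splitting argument only controls the difference. The route I would try is to propagate the \emph{pointwise} floor enforced by the limiter, $\widehat{\bf IE}^n\ge\epsilon^{\text{IE}}$ with $\epsilon^{\text{IE}}$ as in~\eqref{ALEPHUSED}, together with a minimum-entropy / minimum-temperature type property of the underlying first-order positivity-preserving scheme of \cite{UY_3Dlow} and the $L_\infty$ control on the admissible initial data, so as to bound $\sum_k{\bf 1}_1^\top\widehat{\mathcal{P}}\widehat{\bf IE}^n_k$ from below by a fixed positive fraction of $\sum_k{\bf 1}_1^\top\widehat{\mathcal{P}}\widehat{\bf Et}^0_k$, after which the kinetic-energy lower bound would follow from the same splitting identity; the delicate point is to verify that the fraction obtained this way is compatible with the constant $c^{Et}_{\min}$ in the hypotheses (equivalently, to identify precisely which feature of the hypothesised boundary-contribution bounds supplies it).
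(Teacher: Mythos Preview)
Your approach is essentially identical to the paper's for everything that the paper actually proves: drop absolute values by pointwise positivity, invoke the conservation identities~\eqref{conservationIMPLIES} together with the hypothesised two-sided bounds to get the density and total-energy lines, and use the pointwise inequalities $0\le \widehat{\bf IE}^n_k,\,\widehat{\bf KE}^n_k \le \widehat{\bf Et}^n_k$ to get the \emph{upper} bounds on internal and kinetic energy. That is all the paper does.

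You are right to flag the \emph{lower} bounds on $\sum_k{\bf 1}_1^\top\widehat{\mathcal{P}}|\widehat{\bf IE}^n_k|$ and $\sum_k{\bf 1}_1^\top\widehat{\mathcal{P}}|\widehat{\bf KE}^n_k|$ as the obstacle --- but be aware that the paper's own proof does not establish them either. Its entire argument for the last two lines is the sentence ``the positivity of the internal and kinetic energy functions \ldots\ guarantees that the remaining bounds hold, since $IE_{k,i}$ and $KE_{k,i}$ are bounded from above by $ET_{k,i}$,'' which justifies only the upper bounds. Your proposed route via the limiter floor~\eqref{ALEPHUSED} and a minimum-entropy property of the first-order scheme in~\cite{UY_3Dlow} goes well beyond what the paper attempts, and in fact the stated lower bounds cannot hold in general (take any admissible state with zero velocity: then $\widehat{\bf KE}^n_k\equiv 0$ while $c^{Et}_{\min}\sum_k{\bf 1}_1^\top\widehat{\mathcal{P}}\widehat{\bf Et}^0_k>0$). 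So the discrepancy you detected is a defect in the theorem statement rather than a gap in your argument; your proof of what is actually provable matches the paper's.
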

\begin{proof}
The pointwise positivity implies that $\left| \hat{\bfnc{\rho}}_{k}^n  \right| = \hat{\bfnc{\rho}}_{k}^n$, $\left| \widehat{\bf Et}_{k}^n  \right| = \widehat{\bf Et}_{k}^n$, $\left| \widehat{\bf IE}_{k}^n  \right| = \widehat{\bf IE}_{k}^n$, and $\left| \widehat{\bf KE}_{k}^n  \right| = \widehat{\bf KE}_{k}^n$  on every element.
Hence, the bounds for density and total energy are an immediate consequence of Eq.~\eqref{conservationIMPLIES}.  Furthermore, the positivity of the internal and kinetic energy functions at every solution point in the domain guarantees that the remaining bounds hold, since $IE_{k,i}$ and $KE_{k,i}$ are bounded from above by $ET_{k,i}$ at each solution point $i$.
\end{proof}

\subsection{$L_2$ stability}  
We now show that the proposed flux limiting scheme is also $L_2$  stable. 
Define a new convex entropy 
$
\bar{S} = S - S({\bm u}_0) - S_{\U}({\bm u}_0)^\top({\bm u} - {\bm u}_0)$, where ${\bm u}_0$ is a constant non-zero state with zero velocity and the associated entropy variables
\begin{equation}
\label{eq:Evariables_U_0_DISCRETE}
\bar{\bm{w}}
\equiv 
\bar{S}_{\U}
= 
S_{\U}
-
S_{\U}({\bm u}_0)
=
\bm{w}
-
\bm{w}_0.
\end{equation}
For this new entropy $\bar{S}$, we form the corresponding discrete entropy variables $\bar{{\bf w}}_k = {\bf w}_k - {\bf w}_0$ on the $k$th element, where ${\bf w}_0(\vec{\xi}_{ijl}) = \bm{w}_0$ for every solution point of each element.
Contracting Eq.~\eqref{EQN_LIMSCHEME} with the new entropy variables, $\bar{{\bf w}}_k$, yields
\begin{equation}
\label{eq:Sint}
\begin{array}{ll}
\sum\limits^{K}_{k=1}
\bar{{\bf w}}_k^\top \mathcal{P} \frac{d {\bf \hat{\bf U}}_k}{dt}
=
\sum\limits^{K}_{k=1}
{\bf w}_k^\top \mathcal{P} \frac{d {\bf \hat{\bf U}}_k}{dt}
-
{\bf w}_0^\top \mathcal{P} \frac{d {\bf \hat{\bf U}}_k}{dt}.
\end{array}
\end{equation}
From Theorem~\ref{TH_ENT_STAB}, it follows that 
$$
\sum\limits^{K}_{k=1}
{\bf w}_k^\top \mathcal{P} \frac{d {\bf \hat{\bf U}}_k}{dt}
=
B
-
\mathcal{D},
$$
where $B$ contains all boundary contributions and $\mathcal{D} \geq 0$ is the total entropy dissipation.
Assuming that boundary conditions are periodic 
yields 
$$
\sum\limits^{K}_{k=1}
{\bf w}_k^\top \mathcal{P} \frac{d {\bf \hat{\bf U}}_k}{dt}
\leq
-
\mathcal{D}.
$$
Taking into account that 
\begin{equation}
\sum\limits^{K}_{k=1}
{\bf w}_0^\top \mathcal{P} \frac{d {\bf \hat{\bf U}}_k}{dt}
=
\bm{w}_0
\sum\limits^{K}_{k=1}
{\bf 1}_1^\top \mathcal{P} \frac{d {\bf \hat{\bf U}}_k}{dt}
\end{equation}
and the boundary conditions are periodic, i.e.,  ${\bf 1}_1^\top \mathcal{P} \frac{d {\bf \hat{\bf U}}_k}{dt}=0$ , 
we can obtain the following estimate from Eq.~(\ref{eq:Sint}): 
\begin{equation}
\label{eq:wdUdtBounds}
\sum\limits^{K}_{k=1}
\bar{{\bf w}}_k^\top \mathcal{P} \frac{d {\bf \hat{\bf U}}_k}{dt}
\leq
\sum\limits^{K}_{k=1}
{\bf w}_k^\top \mathcal{P} \frac{d {\bf \hat{\bf U}}_k}{dt}
\leq
- \mathcal{D}.
\end{equation}
Using
$\bar{{\bf w}}_k^\top \mathcal{P} \frac{d {\bf \hat{\bf U}}_k}{dt} = {\bf 1}_1^\top \widehat{\mathcal{P}} \frac{d \hat{\bar{{\bf S}}}}{dt}$ and
integrating Eq.~(\ref{eq:wdUdtBounds}) in time from $t^n$ to $t^{n+1}$  yield
\begin{equation}
\label{eq:Sn+1bound}
\begin{split}
\sum\limits^{K}_{k=1}
{\bf 1}_1^\top \widehat{\mathcal{P}} \hat{\bar{{\bf S}}}^{n+1}_k
\leq
\sum\limits^{K}_{k=1}
{\bf 1}_1^\top \widehat{\mathcal{P}} \hat{\bar{{\bf S}}}^n_k
-
\tau^n
\bar{\mathcal{D}}^n,
\end{split}
\end{equation}
where $\bar{\mathcal{D}}^n \geq 0$ is the time-averaged entropy dissipation over the time interval $[t^n, t^{n+1}]$.
Applying the above inequality $n$ times, we have
\begin{equation}
\label{eq:SnboundD}
\begin{split}
\sum\limits^{K}_{k=1}
{\bf 1}_1^\top \widehat{\mathcal{P}} \hat{\bar{{\bf S}}}^{n}_k
\leq
\sum\limits^{K}_{k=1}
{\bf 1}_1^\top \widehat{\mathcal{P}} \hat{\bar{{\bf S}}}^1_k
-
\sum\limits^{n-1}_{i=1}
\tau^i
\bar{\mathcal{D}}^i.
\end{split}
\end{equation}   

Expanding $S$ in the Taylor series  about ${\bm u}_0$ gives
\begin{equation}
\begin{array}{ll}
S({\bf U}^n_k(\vec{\xi}_{a})) 
& = 
S({\bm u}_0)
+
S_{\U}({\bm u}_0)^\top
\left(
{\bf U}^n_k(\vec{\xi}_{a})
-
{\bm u}_0
\right)
\\
&
+
\frac{1}{2}
\left(
{\bf U}^n_k(\vec{\xi}_{a})
-
{\bm u}_0
\right)^\top
S_{\U\U}(\widetilde{{\bf U}}^n_k(\vec{\xi}_{a}))
\left(
{\bf U}^n_k(\vec{\xi}_{a})
-
{\bm u}_0
\right),
\end{array}
\end{equation}
where the state $\widetilde{{\bf U}}^n_k(\vec{\xi}_{a})$ has positive density and temperature for every solution point  $\vec{\xi}_a$, since ${\bm u}_0$ and ${\bf U}^n_k(\vec{\xi}_{a})$ are both in the admissible set. Note that
$$
\begin{array}{l}
\bar{{\bf S}}^n_k(\vec{\xi}_{a}) = 
{\bf S}^n_k(\vec{\xi}_{a}) - S({\bm u}_0) - S_{\U}({\bm u}_0)^\top({\bf U}^n_k(\vec{\xi}_{a}) - {\bm u}_0)
= \\
\frac{1}{2}
\left(
{\bf U}^n_k(\vec{\xi}_{a})
-
{\bm u}_0
\right)^\top
S_{\U\U}(\widetilde{{\bf U}}^n_k(\vec{\xi}_{a}))
\left(
{\bf U}^n_k(\vec{\xi}_{a})
-
{\bm u}_0
\right).
\end{array}
$$  
Let $\lambda^{\min,n}_{S_{\U\U}}$ be the minimum eigenvalue of all Hessian matrices $S_{\U\U}(\widetilde{{\bf U}}^n_k(\vec{\xi}_{a}))$ in the domain. 
Then, defining $\mathcal{C}^{n-1}$ as
$$
\mathcal{C}^{n-1} = \sum\limits^{K}_{k=1}
{\bf 1}_1^\top \widehat{\mathcal{P}} \hat{\bar{{\bf S}}}^1_k
-
\sum\limits^{n-1}_{i=1}
\tau^i
\bar{\mathcal{D}}^i
\leq
\sum\limits^{K}_{k=1}
{\bf 1}_1^\top \widehat{\mathcal{P}} \hat{\bar{{\bf S}}}^1_k,
$$ 
we have
\begin{equation}
\label{eq:SUUbound}
2\lambda^{\min,n}_{S_{\U\U}}
\sum\limits^{K}_{k=1}
\left(
{\bf U}^n_k
-
{\bf U}_0
\right)^\top
\mathcal{P}
\left[J \right]_k
\left(
{\bf U}^n_k
-
{\bf U}_0
\right)
\leq
4
\mathcal{C}^{n-1}.
\end{equation}
Using the following inequality: ${\bf Y}^\top{\bf Y} \leq 2({\bf Y}-{\bf Y}_0)^\top({\bf Y}-{\bf Y}_0) + 2{\bf Y}_0^\top{\bf Y}_0$ that holds 
$\forall {\bf Y}, {\bf Y}_0 \in \mathbb{R}^m$, Eq.~(\ref{eq:SUUbound}) can be recast as follows:
\begin{equation}
\sum\limits^{K}_{k=1}
\left(
{\bf U}^n_k
\right)^\top
\mathcal{P}
\left[J \right]_k
{\bf U}^n_k
\leq
4
\frac{\mathcal{C}^{n-1}}
{\lambda^{\min,n}_{S_{\U\U}}}
+
2
\sum\limits^{K}_{k=1}
{\bf U}_0^\top
\mathcal{P}
\left[J \right]_k
{\bf U}_0,
\end{equation}
thus providing a discrete $L_2$ bound on the solution.  

An alternative $L_2$ bound can be obtained by using the Cholesky  decomposition of $\mathcal{S}_{\U\U} = \mathscr{L}\mathscr{D}\mathscr{L}^\top$,
where the exact expressions of the matrices $\mathscr{L}$ and $\mathscr{D}$ are presented in \cite{JUthesis}.
Based on the Cholesky  decomposition of $\mathcal{S}_{\U\U}$, it has been shown in \cite{JUthesis} that the following bounds hold:
 \begin{equation}
\begin{array}{ll}
&
\left(
{\bf U}^n_k(\vec{\xi}_{j})
-
{\bm u}_0
\right)^\top
S_{\U\U}(\widetilde{{\bf U}}^n_k(\vec{\xi}_{j}))
\left(
{\bf U}^n_k(\vec{\xi}_{j})
-
{\bm u}_0
\right)
\geq
\frac{(\bfnc{\rho}^n_k(\vec{\xi}_{j})-\rho_0)^2}
{b_1(\widetilde{{\bf U}}^n_k(\vec{\xi}_{j}))},
\\
&
\left(
{\bf U}^n_k(\vec{\xi}_{j})
-
{\bm u}_0
\right)^\top
S_{\U\U}(\widetilde{{\bf U}}^n_k(\vec{\xi}_{j}))
\left(
{\bf U}^n_k(\vec{\xi}_{j})
-
{\bm u}_0
\right)
\geq
\frac{
((\bfnc{m}_i)^n_k(\vec{\xi}_{j}))^2}
{b_{i+1}(\widetilde{{\bf U}}^n_k(\vec{\xi}_{j}))}, \quad \, i=1,2,3,
\\
&
\left(
{\bf U}^n_k(\vec{\xi}_{j})
-
{\bm u}_0
\right)^\top
S_{\U\U}(\widetilde{{\bf U}}^n_k(\vec{\xi}_{j}))
\left(
{\bf U}^n_k(\vec{\xi}_{j})
-
{\bm u}_0
\right)
\geq
\frac{({\bf Et}^n_k(\vec{\xi}_{j})-\E t_0)^2}
{b_5(\widetilde{{\bf U}}^n_k(\vec{\xi}_{j}))},
\\
&
b_1({\bm u}_j)
=
\frac{\rho_j}{R},
\quad
b_{i+1}({\bm u}_j)
=
\frac{\fnc{P}_j + \rho_j (V_{i}^2)_j}{R},
\quad \, i=1,2,3,
\\
&
b_5({\bm u}_j)
=
\frac{\fnc{P}^2_j\gamma 
+ 
\fnc{P}_j\rho_j \|\bfnc{V}_j\|^2\gamma
+
\left(
\rho_j
\frac{\|\bfnc{V}_j\|^2}{2}
\right)^2
}{R\rho_j},
\end{array}
\end{equation}
where $(\bfnc{m}_i)^n_k(\vec{\xi}_{j}) = \bfnc{\rho}^n_k(\vec{\xi}_{j})(\bfnc{V}_i)^n_k(\vec{\xi}_{j})  $ is the $i$th component of momentum and ${\bf Et}^n_k(\vec{\xi}_{j}) = \bfnc{\rho}^n_k(\vec{\xi}_{j}){\bf E}^n_k(\vec{\xi}_{j})$ is the total energy.  Let $b^{\max,n}_i = \max\limits_{1 \leq k \leq K}\max\limits_{1 \leq j \leq N_p} b_i(\widetilde{{\bf U}}^n_k(\vec{\xi}_{j}))$,  then we obtain the following $L_2$
bounds on the solution:
\begin{equation}
\begin{array}{ll}
\sum\limits^{K}_{k=1}
\left(
\bfnc{\rho}^n_k
\right)^\top
\widehat{\mathcal{P}}
\hat{\bfnc{\rho}}^n_k
&
\leq
4
b^{\max,n}_1
\mathcal{C}^{n-1}
+
2
\sum\limits^{K}_{k=1}
\bfnc{\rho}_0^\top
\widehat{\mathcal{P}}
\hat{\bfnc{\rho}}_0,
\\
\sum\limits^{K}_{k=1}
\left(
(\bfnc{m}_i)^n_k
\right)^\top
\widehat{\mathcal{P}}
(\hat{\bfnc{m}}_i)^n_k
&
\leq
2
b^{\max,n}_{i+1}
\mathcal{C}^{n-1}, \quad  \quad \quad i=1,2,3,
\\
\sum\limits^{K}_{k=1}
\left(
{\bf Et}^n_k
\right)^\top
\widehat{\mathcal{P}}
\widehat{{\bf Et}}^n_k
&
\leq
4
b^{\max,n}_5
\mathcal{C}^{n-1}
+
2
\sum\limits^{K}_{k=1}
{\bf Et}_0^\top
\widehat{\mathcal{P}}
\widehat{{\bf Et}}_0.
\end{array}
\end{equation}

\section{Numerical Results}
\label{results}
We now assess the accuracy, discontinuity-capturing, and positivity-pre-serving properties of the proposed family of high-order entropy stable spectral 
collocation schemes for the 3-D compressible Navier-Stokes equations on standard benchmark problems with smooth and discontinuous solutions. 
In all numerical experiments  presented herein, the 3rd-order strong stability preserving (SSP) Runge-Kutta
scheme developed in \cite{SSP} is used to advance the semi-discretization in time. Note
that this scheme violates the entropy stability property of the semi-discrete
operator by a factor proportional to the local temporal truncation error. 
%
%
\begin{table}[!h]
\begin{center}
\begin{tabular}{ccccccccc}
\hline
\quad  &  ESSC &\quad & \quad & \quad  & PPESAD & \quad & \quad  & \quad  \\
$K$   & $L_{\infty}$ error  &  rate  & $L_2$ error  &  rate &\quad $L_{\infty}$ error &  rate & $L_2$ error  &  rate \\
\hline
&&&& $p = 4$ &&&& \\
\hline
  3  & $\bm{ 1.24}$   & --       & $\bm{ 4.75\textbf{e-}2}$  & --   & $\bm{ 0.68}$  & --     & $\bm{4.03\textbf{e-}2}$ &  --     \\
  6  & $\bm{ 0.80}$   & 0.63     & $\bm{ 8.50\textbf{e-}3}$  & 2.48 & $\bm{ 0.56}$  & 0.27   & $\bm{8.20\textbf{e-}3}$ &  2.30   \\
 12  & 0.11           & 2.89     & 8.51e-4          & 3.32 &  0.11         & 2.37   & 8.51e-4        &  3.27   \\
 24  & 6.93e-3        & 3.96     & 4.96e-5          & 4.10 &  6.93e-3      & 3.96   & 4.96e-5        &  4.10   \\
 48  & 3.09e-4        & 4.49     & 1.54e-6          & 5.01 &  3.09e-4      & 4.49   & 1.54e-6        &  5.01   \\
\hline
&&&& $p = 5$ &&&& \\
\hline
  3  & $\bm{ 3.15}$   & --       & $\bm{ 3.30\textbf{e-}2}$  & --   & $\bm{0.88}$   & --     & $\bm{ 2.99\textbf{e-}2}$ &  --      \\
  6  & 0.34           & 3.20     & 4.13e-3          & 3.00 &  0.34         & 1.36   & 4.13e-3         & 2.86   \\
 12  & 4.37e-2        & 2.97     & 2.49e-4          & 4.05 &  4.37e-2      & 2.97   & 2.49e-4         & 4.05   \\
 24  & 2.30e-3        & 4.25     & 7.77e-6          & 5.00 &  2.30e-3      & 4.25   & 7.77e-6         & 5.00   \\
 48  & 3.50e-5        & 6.04     & 9.99e-8          & 6.28 &  3.50e-5      & 6.04   & 9.99e-8         & 6.28   \\
\hline
&&&& $p = 6$ &&&& \\
\hline
  3  & $\bm{ 1.27}$   & --       & $\bm{ 2.11\textbf{e-}2}$  & --   & $\bm{ 0.52}$  & --     & $\bm{ 1.99\textbf{e-}2}$ & --       \\
  6  & 0.12           & 3.35     & 1.92e-3          & 3.46 & 0.12          & 2.07   & 1.92e-3    & 3.38   \\
 12  & 1.44e-2        & 3.11     & 7.33e-5          & 4.71 & 1.44e-2       & 3.11   & 7.33e-5    & 4.71   \\
 24  & 3.27e-4        & 5.46     & 1.20e-6          & 5.94 & 3.27e-4       & 5.46   & 1.20e-6    & 5.94   \\
 48  & 3.06e-6        & 6.74     & 7.56e-9          & 7.31 & 3.06e-6       & 6.74   & 7.56e-9    & 7.31   \\
\hline
\end{tabular}
\end{center}
\caption{\label{tab2} $L_{\infty}$ and $L_2$ errors and their convergence rates obtained 
with the ESSC and PPESAD schemes for $p = 4, 5, 6$ for the viscous shock problem on 3-D nonuniform grids at $t=0.1$. 
}
\end{table}
The time step in our numerical experiments is selected by using the Courant-Friedrich-Levy (CFL)-type condition 
and the density and temperature positivity constraints presented in the companion paper \cite{UY_3Dlow}.
The following acronyms are used for numerical schemes in this section.
The baseline high-order entropy stable spectral collocation scheme with polynomial order ``\#'' given by Eq.~\eqref{semiDiscCurviBaseline} is denoted as ESSC-p\#.  The new positivity preserving entropy stable artificial dissipation scheme (Eq.~\eqref{EQN_LIMSCHEME}) is denoted as PPESAD-p\#. The PPESAD-p\# scheme with $\bfnc{\mu}^{AD}$ set to zero is  denoted as PPES-p\#.
\begin{figure}[!h] 
    \begin{center}
	\begin{subfigure}{0.32\textwidth}
		\includegraphics[width=0.9\linewidth]{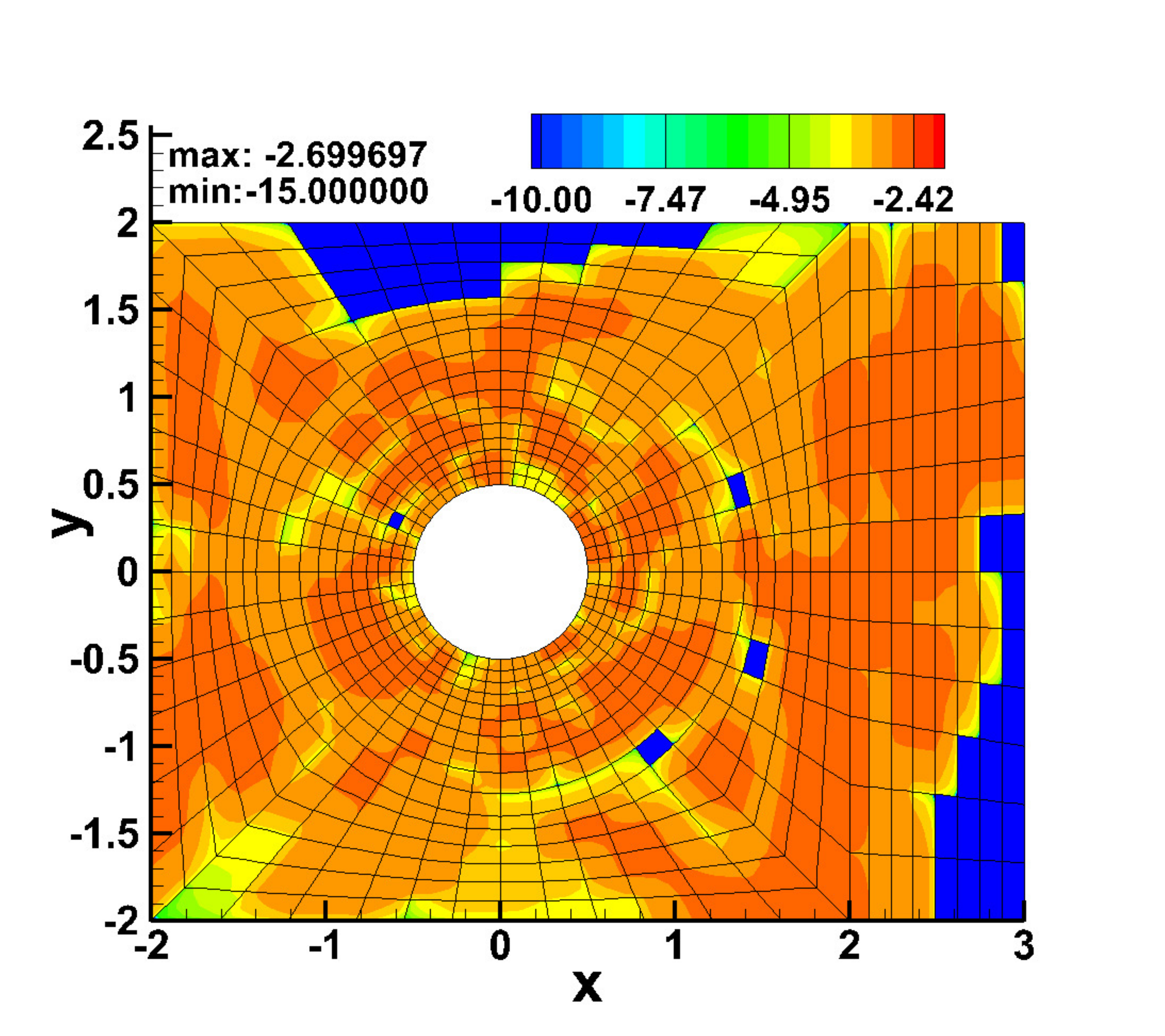} 
		\caption{}
	\end{subfigure}
	\begin{subfigure}{0.32\textwidth}
		\includegraphics[width=0.9\linewidth]{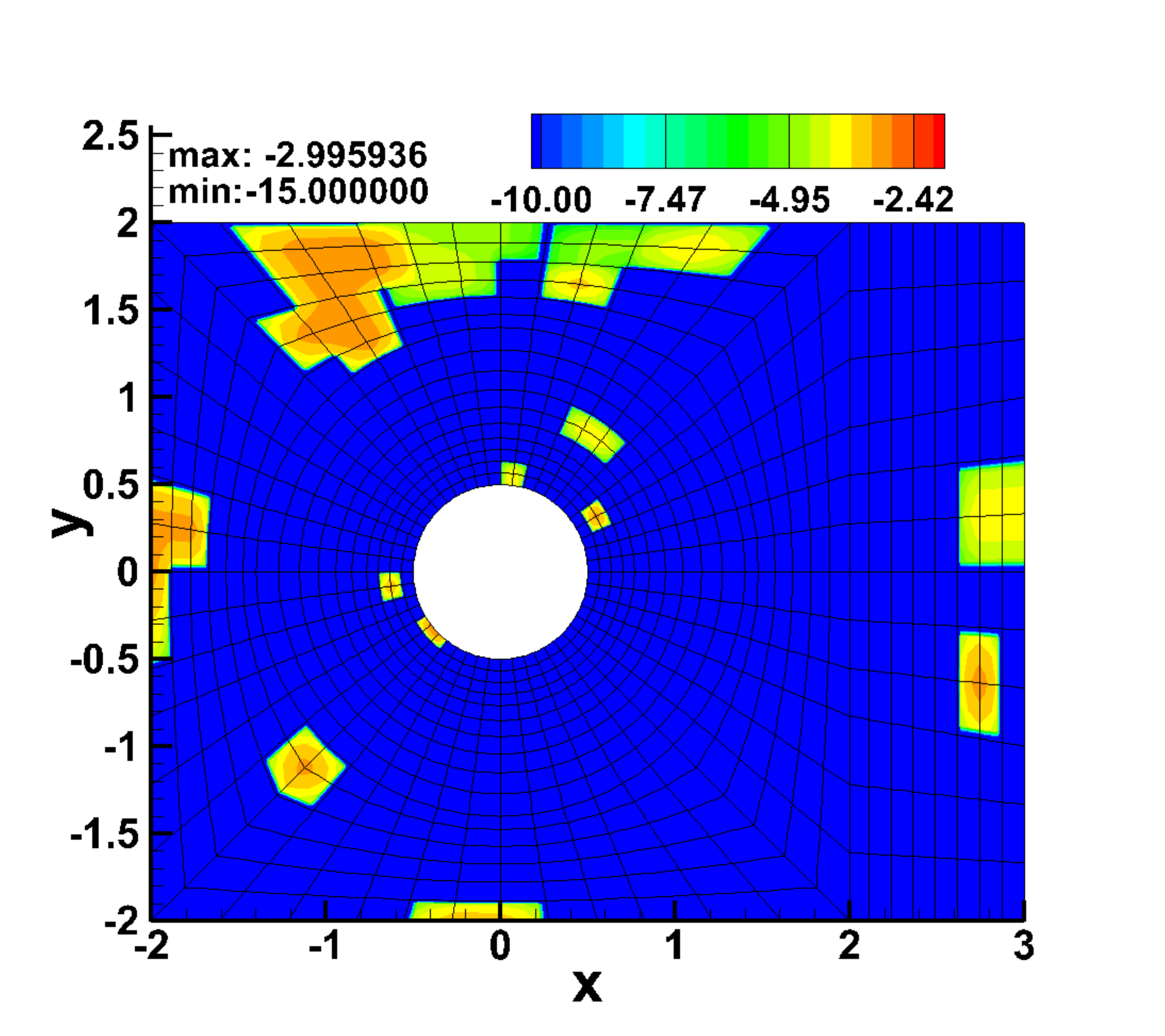}
		\caption{}
	\end{subfigure}
	\begin{subfigure}{0.32\textwidth}
		\includegraphics[width=0.9\linewidth]{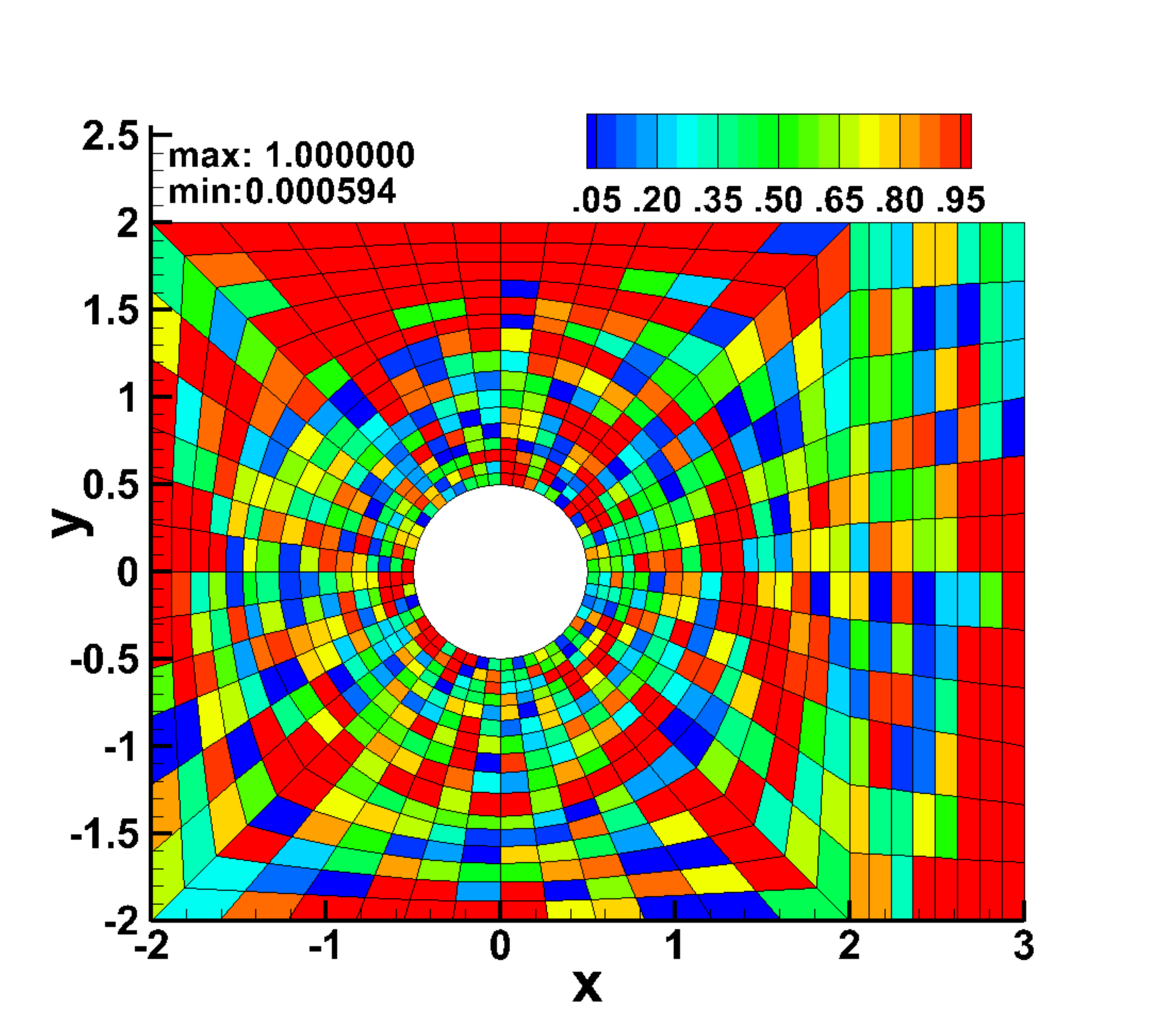}
		\caption{}
	\end{subfigure}
	\end{center}
	 \caption{Contours of randomly generated low-order artificial viscosity (left panel), high-order artificial viscosity (middle panel),  and flux limiter (right panel) obtained with the PPESAD-p4 scheme for the freestream preservation problem at $t = 10$.}  
\label{freeStreamFig}
\end{figure}

 \subsection{$3$-D Viscous Shock}
%
To validate that the proposed schemes are design-order accurate, we consider the propagation of a $3$-D viscous shock on a sequence of randomly perturbed nonuniform grids.
The 1-D viscous shock, which  possesses a smooth analytical solution at the Prandtl number  $Pr = 3/4$, is rotated so that it propagates along the direction $[1, 1, 1]^\top$ and is initially centered at the origin. The Reynolds and Mach numbers are set 
as follows: $Re = 50$ and $Ma = 2.5$.  The governing equations are integrated until $t_{\text{final}} = 0.1$.   
For all polynomial orders presented in Table~\ref{tab2}, the proposed PPESAD scheme outperforms the corresponding baseline ESSC scheme in terms of accuracy on coarse grids, for which the discrete solution is under-resolved (see the results shown in bold). As the grid is refined and the viscous shock becomes fully resolved, the artificial viscosity coefficient 
$\bfnc{\mu}^{AD}$ becomes identically equal to zero and the PPESAD schemes demonstrates the same design-order error convergence as the ESSC scheme. Based on these results, we can conclude that the proposed PPESAD scheme dissipates under-resolved flow features in such a manner that reduces the error, while providing the same accuracy as the underlying ESSC scheme when the solution is sufficiently smooth and fully resolved. 
\begin{figure}[!h] 
	\begin{subfigure}{0.5\textwidth}
		\includegraphics[width=0.9\linewidth]{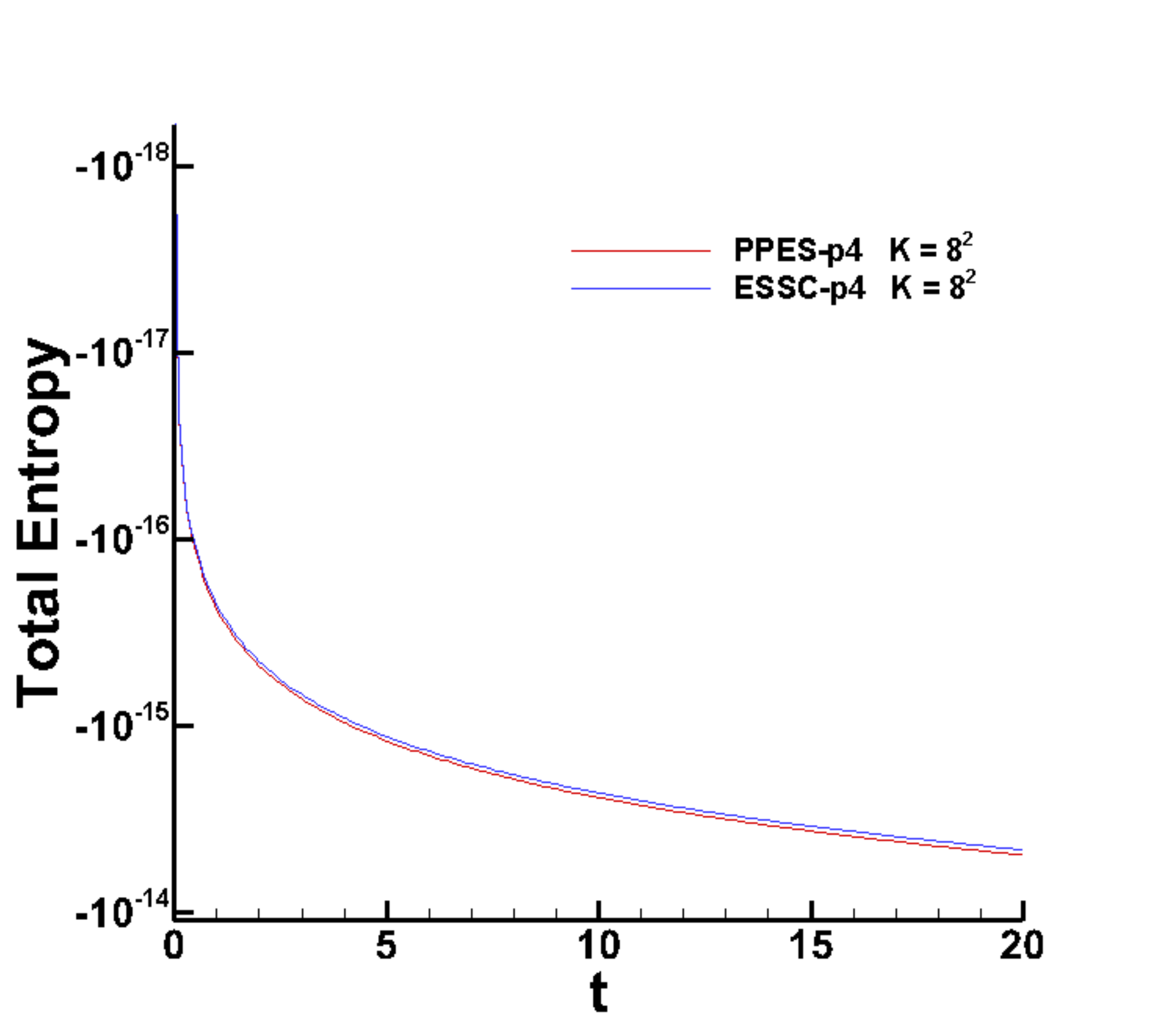} 
		\caption{}
	\end{subfigure}
	\begin{subfigure}{0.5\textwidth}
		\includegraphics[width=0.9\linewidth]{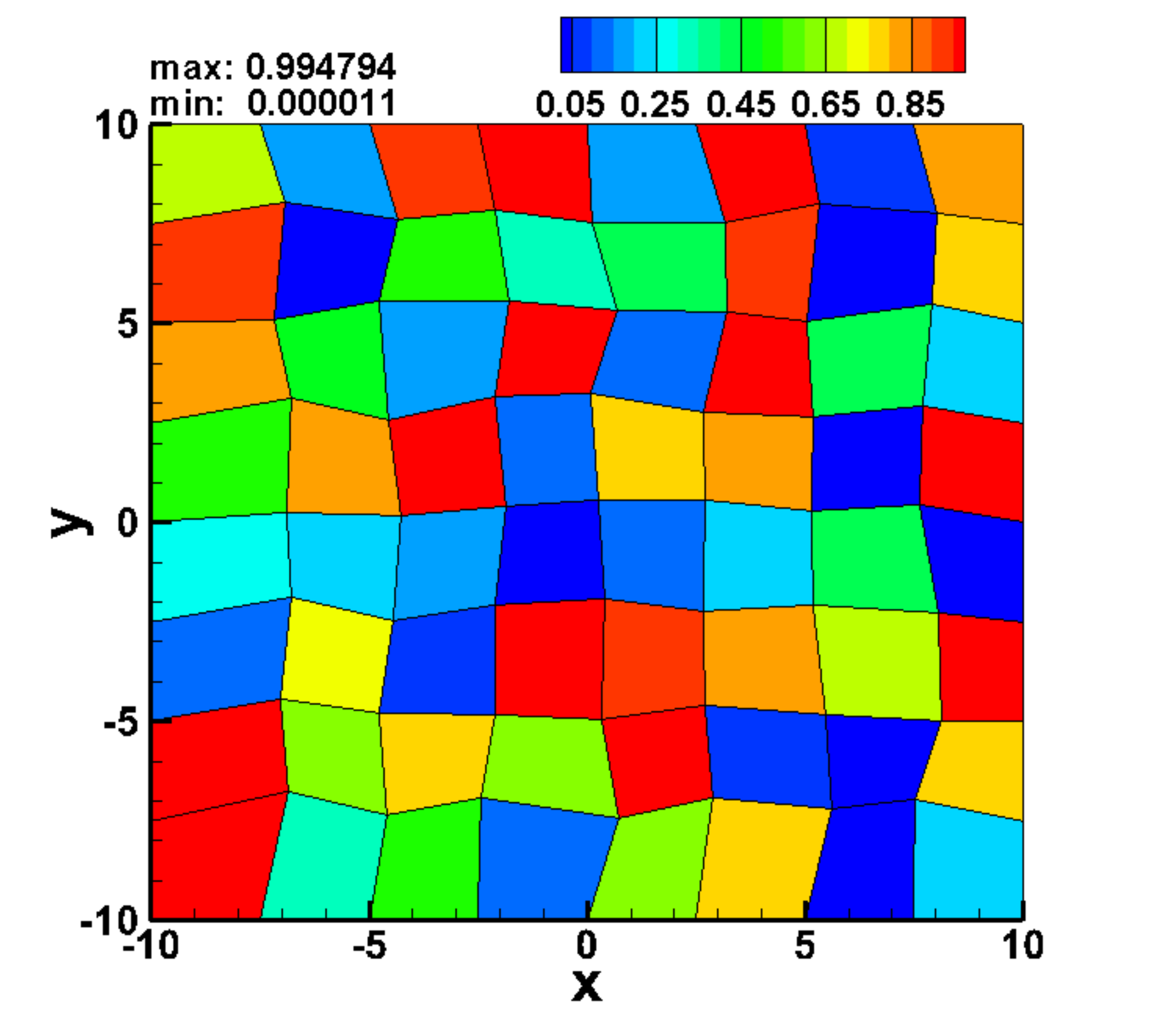}
		\caption{}
	\end{subfigure}
	 \caption{Time histories (left panel) of the total entropy computed with the ESSC-p4 and PPES-p4 schemes 
	 and the PPES-p4 limiter coefficient (right panel)  on a randomly perturbed $K=8^2$ grid 
	 for the isentropic vortex problem.}  
\label{isentropicVortexFig}
\end{figure}

\subsection{Freestream preservation}
We now corroborate our theoretical results presented in Theorem~\ref{thm:freePresMix} and show that
the new high-order positivity--preserving flux-limiting scheme given by Eq.~\eqref{EQN_LIMSCHEME} is freestream preserving on static curvilinear grids. 
To demonstrate this property, the 2D constant viscous flow with $\rho=1$, $T=1$, $\bfnc{V}=[\cos(10^0), \sin{10^0}, 0]^\top$ at $Re = 500$, $Ma = 3.5$, and $Pr = 0.7$ is solved by using the PPESAD scheme on a 864-element genuinely curvilinear grid around a cylinder. 
%
To ensure that all terms in the high-order positivity--preserving flux-limiting scheme are turned on during the simulation,  we randomly set $\bfnc{\mu}^{AD}_p$ and $\bar{\bfnc{\mu}}^{AD}_1$ (see Section~\ref{VISC_LIMSCHEME}) to values between  0 and $1/Re$, and the flux limiter $\theta_f$ (see Section~\ref{SEMIDISC_LIMSCHEME}) to a value between 0 and 1 at each Runge-Kutta stage. As evident in Figure~\ref{freeStreamFig}, all artificial dissipation and flux-limiting terms in the PPESAD scheme given by Eq.~\eqref{EQN_LIMSCHEME} are nonzero throughout the simulation.  Nonetheless, the global $L_2$ and $L_{\infty}$ errors at the final time $t_{\rm final} = 10$ are $2.84\mathrm{e}{-15}$  and $1.46\mathrm{e}{-13}$, respectively, thus corroborating our theoretical results.

\subsection{Entropy Conservation}
In the companion paper \cite{UY_3Dlow} (see Lemma~1), it has been proven that the first-order positivity-preserving entropy stable scheme is entropy conservative for inviscid smooth flows, if all artificial dissipation terms are turned off.  To demonstrate this, we solve the inviscid isentropic vortex flow with periodic boundary conditions at $Ma=0.3$ on a randomly perturbed coarse grid (see Figure~\ref{isentropicVortexFig}).  The vortex is initially located at $(0,0)$, propagates to the right, and returns to the origin by $t_{\text{final}} = 20$. This test problem has the exact solution (e.g., see \cite{GCLSS2019}). To validate that the proposed flux-limiting scheme is entropy conservative, we randomly set the limiter value in the range between 0 and 1 at each grid element, as shown in Fig.~\ref{isentropicVortexFig}.
%
%
\begin{figure}[!h] 
	\begin{subfigure}{0.5\textwidth}
		\includegraphics[width=0.9\linewidth]{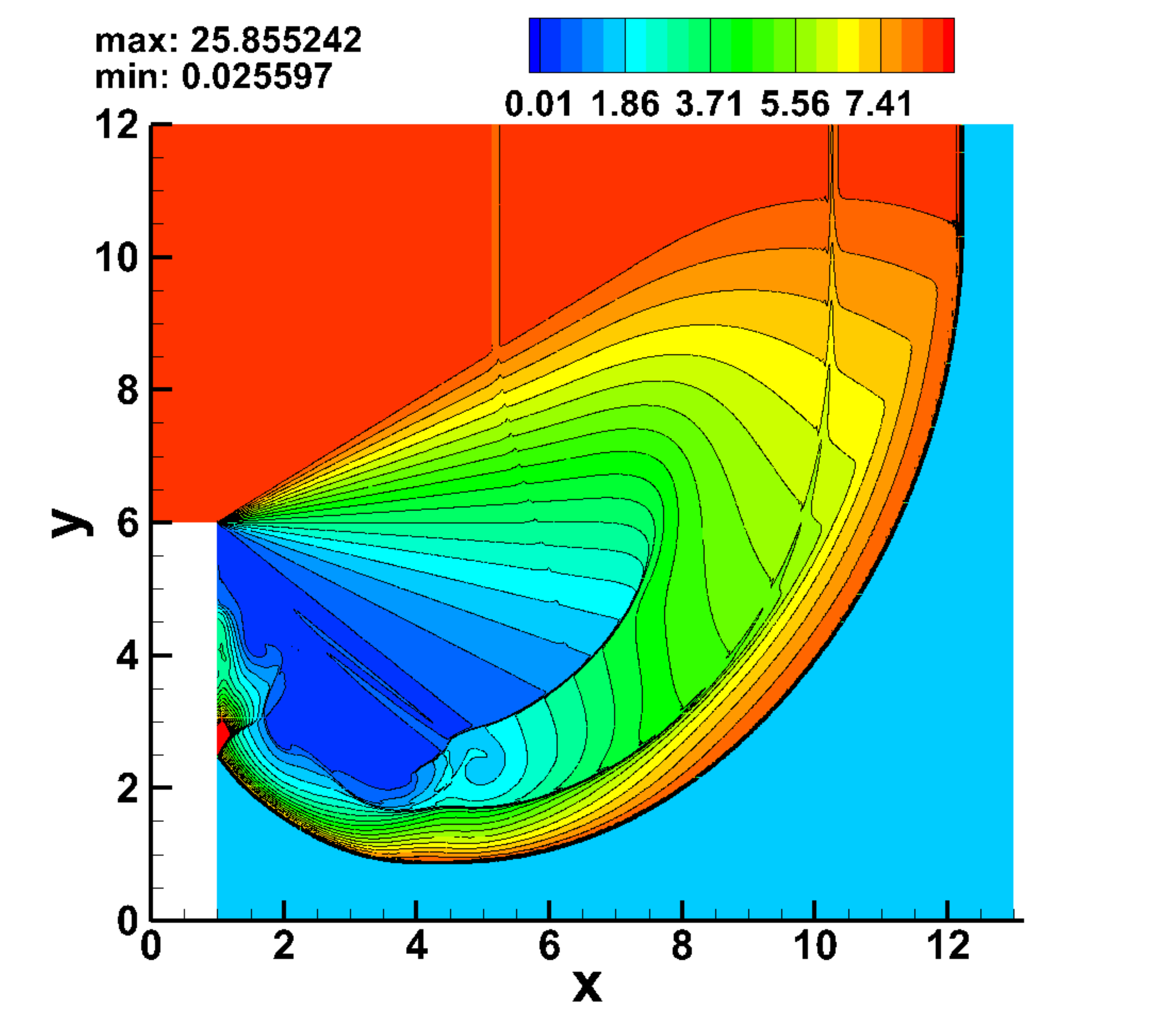}
		\caption{}
	\end{subfigure}
	\begin{subfigure}{0.5\textwidth}
		\includegraphics[width=0.9\linewidth]{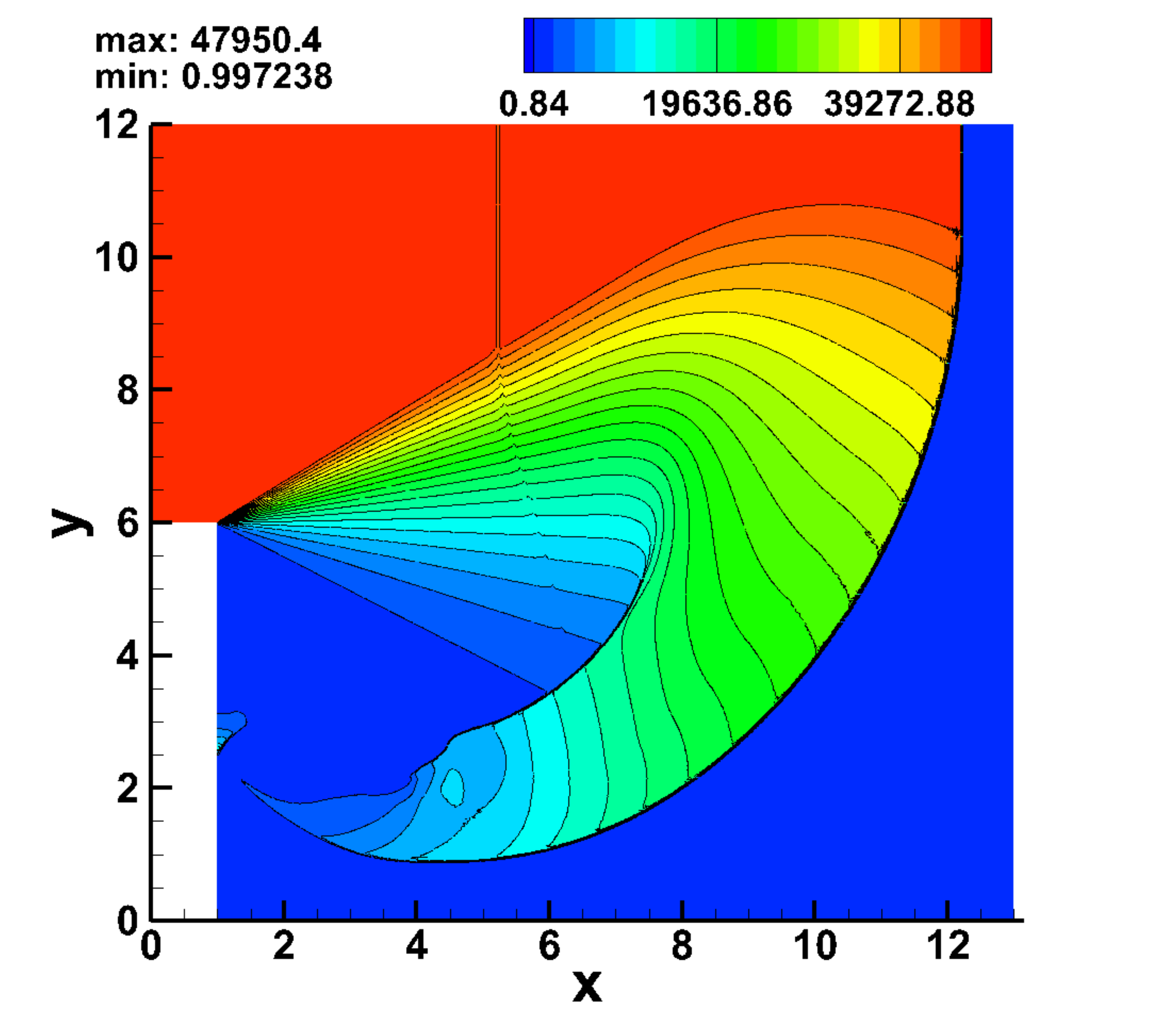}
		\caption{}
	\end{subfigure}
	\\
	\begin{subfigure}{0.5\textwidth}
		\includegraphics[width=0.9\linewidth]{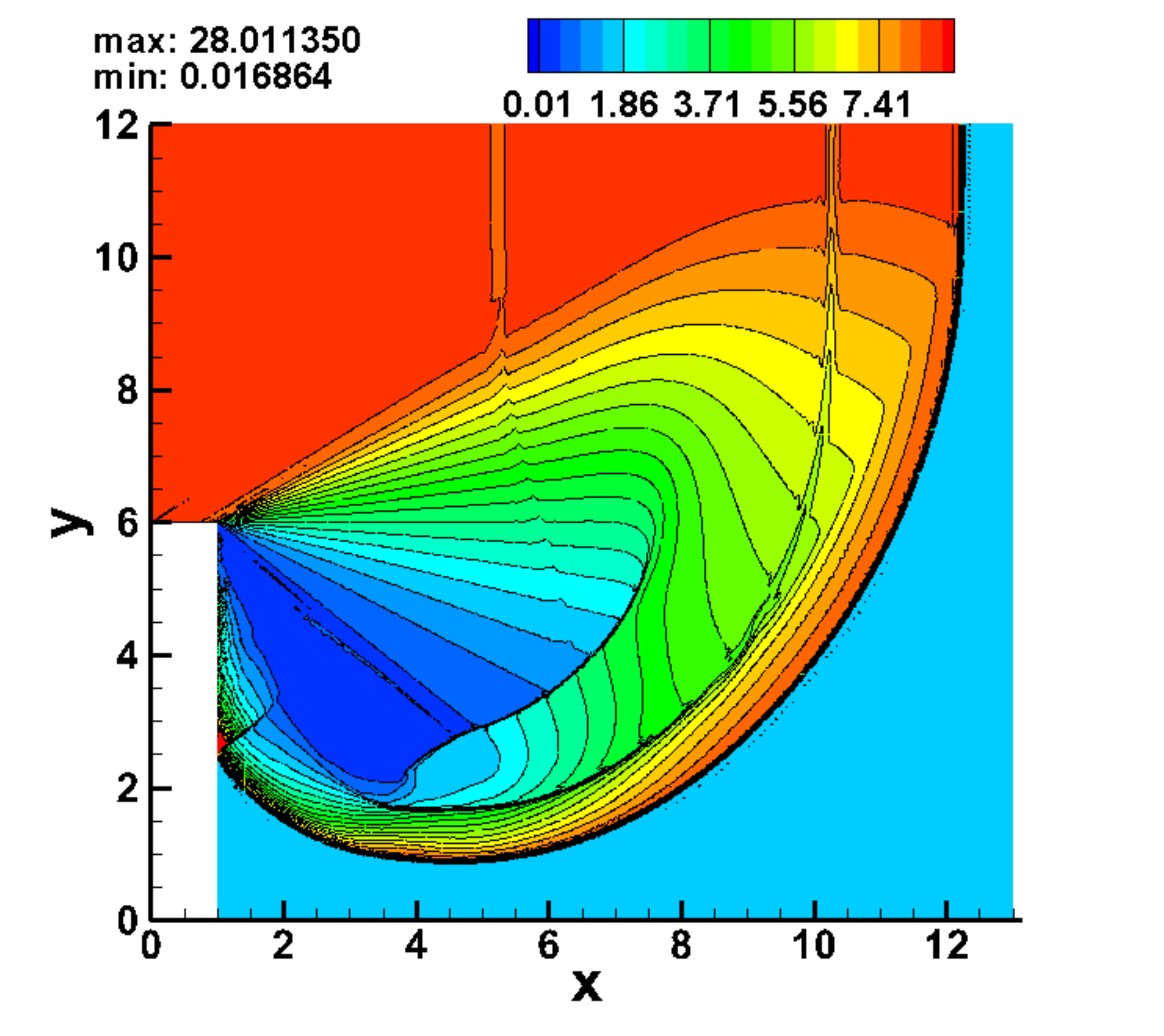} 
		\caption{}
	\end{subfigure}
	\begin{subfigure}{0.5\textwidth}
		\includegraphics[width=0.9\linewidth]{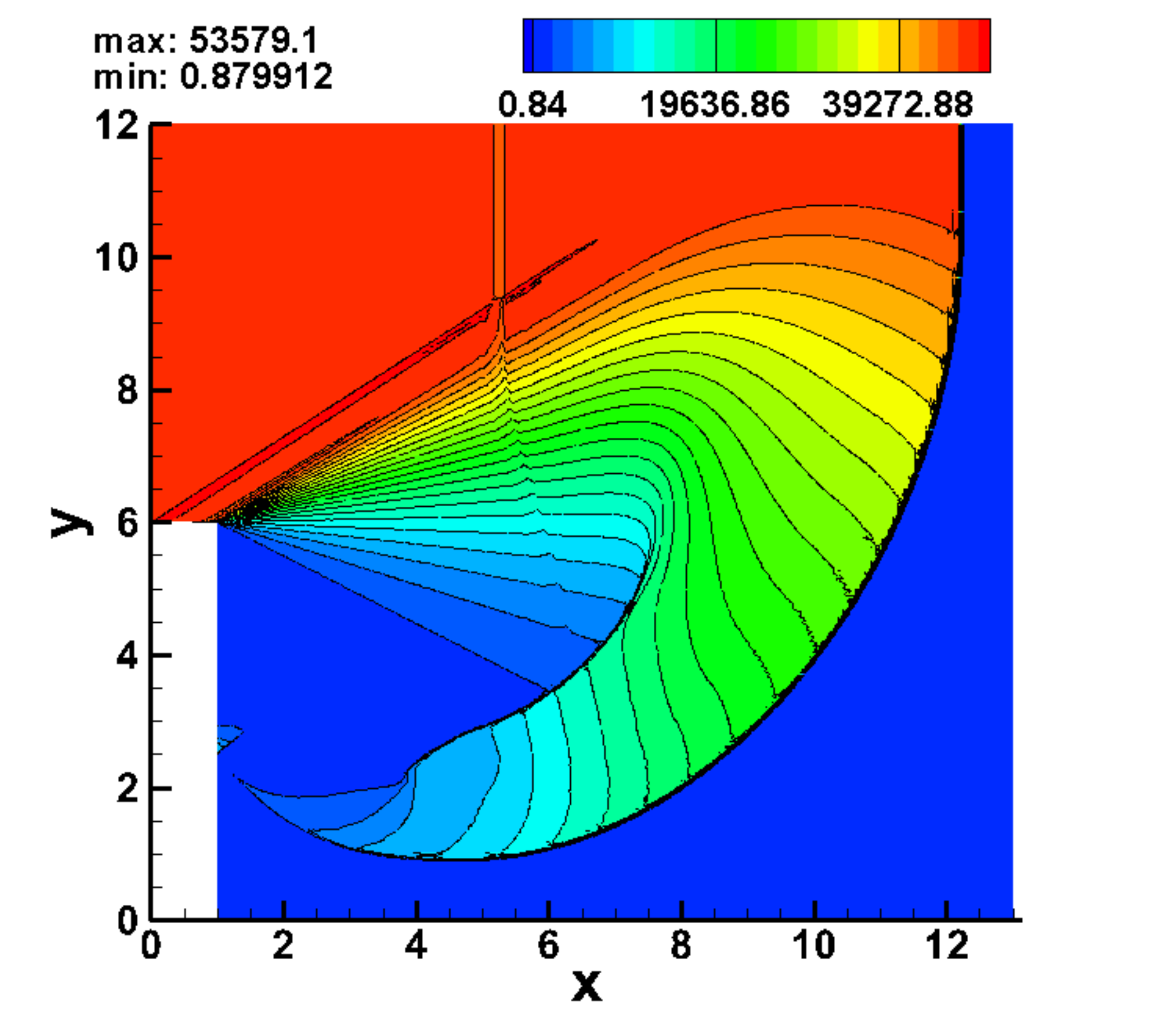} 
		\caption{}
	\end{subfigure}
	 \caption{Density and  pressure contours obtained with the PPESAD-p5 scheme for the inviscid flow (top row) and with the PPESAD-p4 scheme for the viscous shock diffraction  flow (bottom row) at $Ma=200$.} 
\label{densityPres_shockDiffraction_M200}
\end{figure}
For this smooth inviscid flow with periodic boundaries, both the ESSC and PPES schemes semi-discretely conserve the total entropy in the domain.  Note, however, 
the total entropy production obtained with the ESSC-p4 and PPES-p4 schemes with constant time step  $\Delta t = 2\mathrm{e}{-4}$ is of the order of $10^{-14}$ at the final time, because the 3rd-order SSP Runge-Kutta scheme used for approximating the time derivatives is not entropy conservative and violates this condition by the amount that is 
proportional to the truncation error of the temporal discretization.  
%
%
\begin{figure}[!ht] 
	\begin{subfigure}{0.5\textwidth}
		\includegraphics[width=0.9\linewidth]{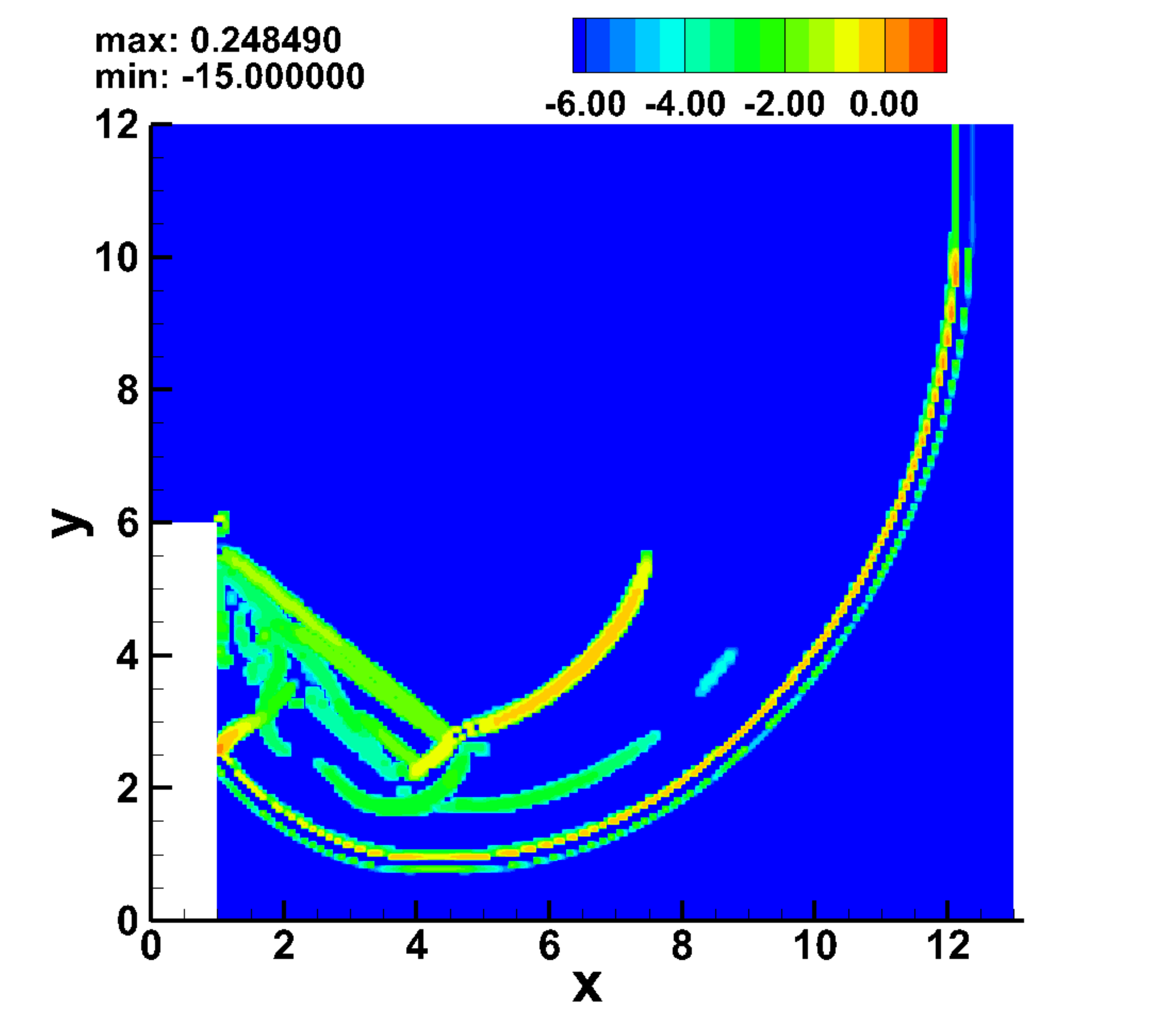} 
		\caption{}
	\end{subfigure}
	\begin{subfigure}{0.5\textwidth}
		\includegraphics[width=0.9\linewidth]{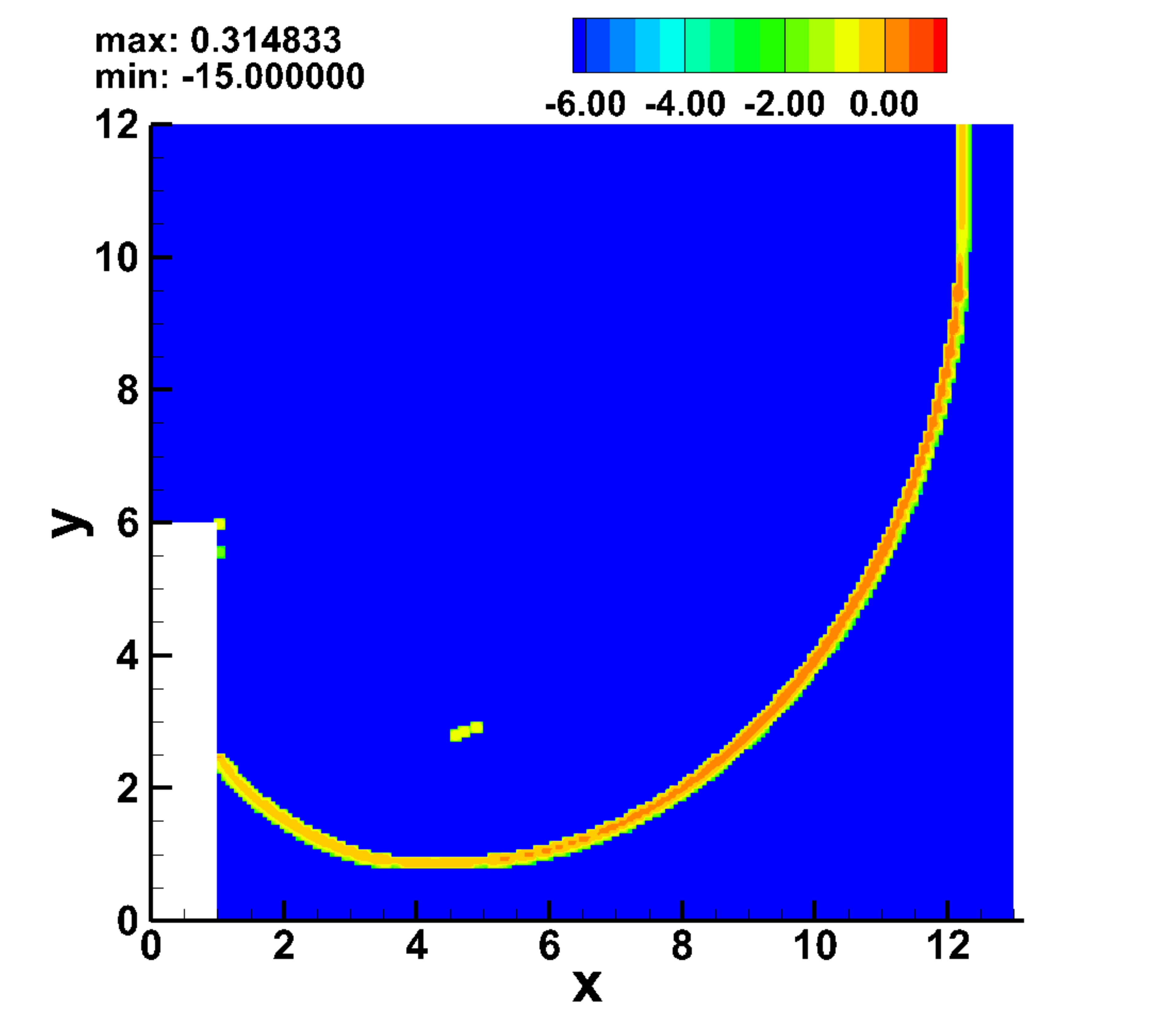}
		\caption{}
	\end{subfigure}
	\\
	\begin{subfigure}{0.5\textwidth}
		\includegraphics[width=0.9\linewidth]{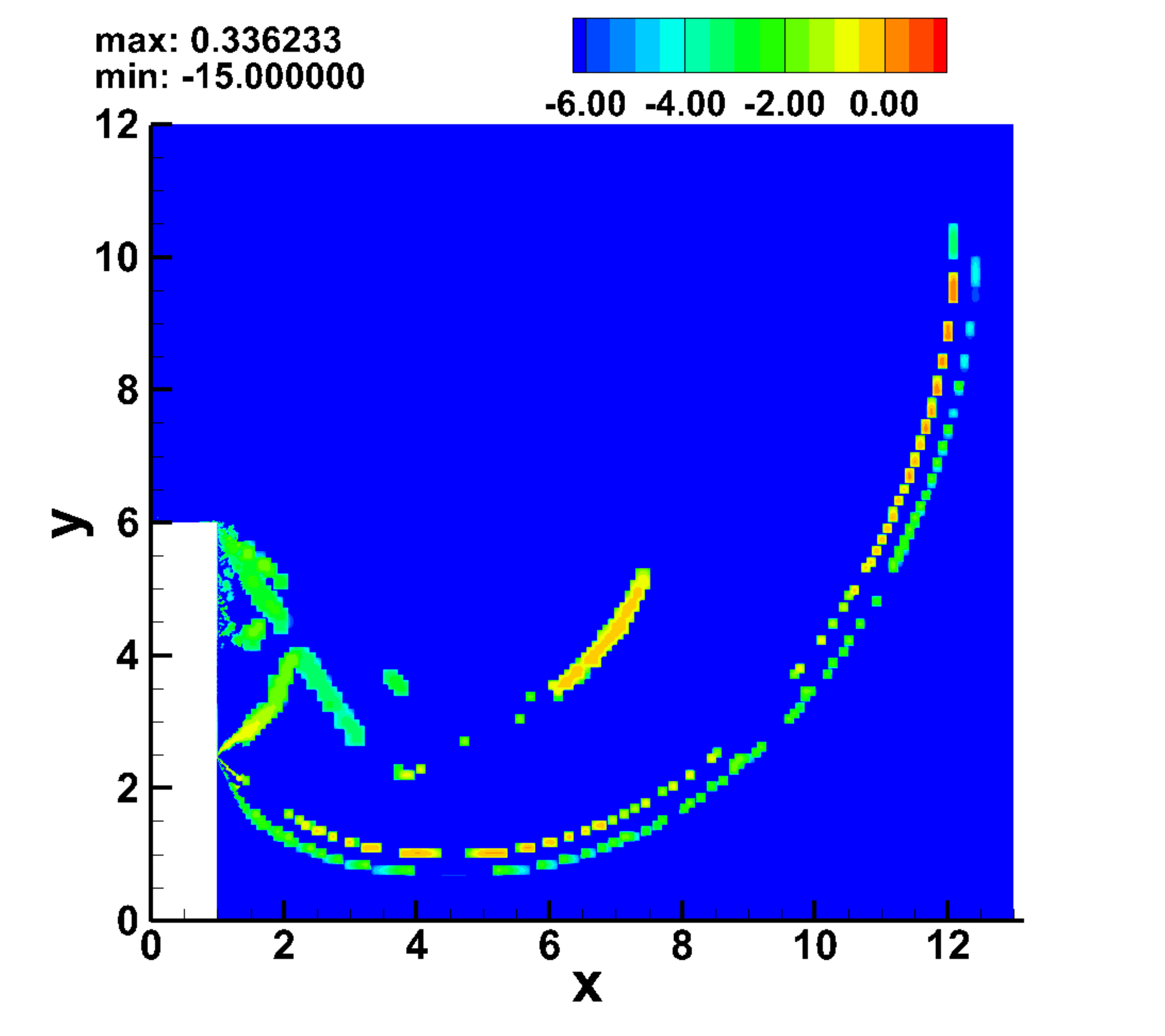} 
		\caption{}
	\end{subfigure}
	\begin{subfigure}{0.5\textwidth}
          \includegraphics[width=0.9\linewidth]{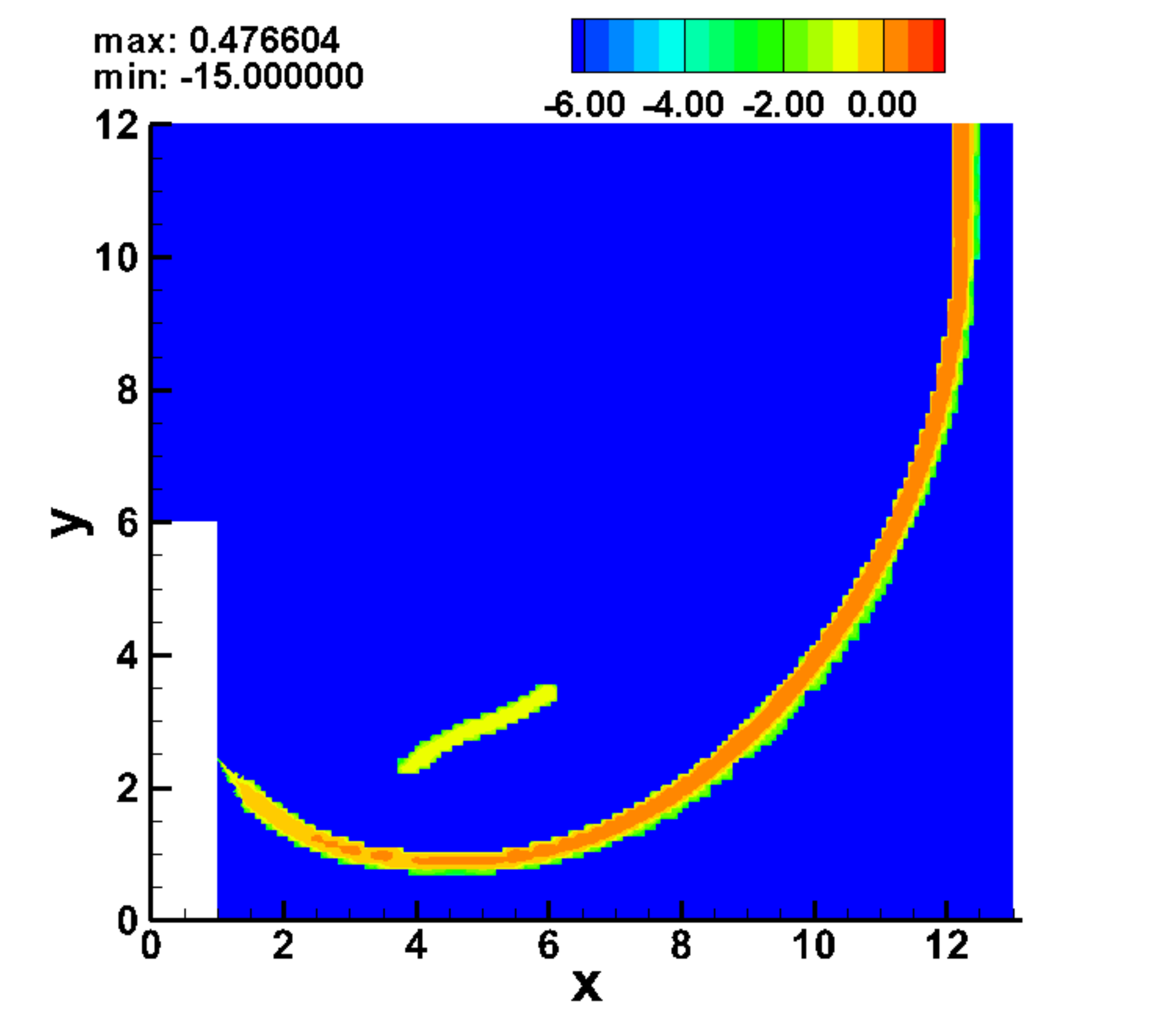}
		\caption{}
	\end{subfigure}
	 \caption{High-order (left column) and low-order (right column) artificial viscosities ($\log_{10}$) obtained with the PPESAD-p5 scheme for the inviscid flow (top row) and with the PPESAD-p4 scheme for the viscous shock diffraction flow (bottom row) at $Ma=200$.} 
\label{logmuad_shockDiffraction_M200}
\end{figure}

\subsection{$2$-D shock diffraction}
%
The next test problem is the diffraction of a rightward moving shock over a backward-facing step at the Mach number, $Ma=200$ . We consider both viscous and inviscid flow regimes.  
This is a very challenging problem that is characterized by the presence of both the strong discontinuities and regions with very low densities and pressures. 
If not dissipated properly, any high-order scheme can generate negative density and/or pressure values near the corner point and at the shock front. 
%
%
\begin{figure}[!h] 
	\begin{subfigure}{0.5\textwidth}
		\includegraphics[width=0.9\linewidth]{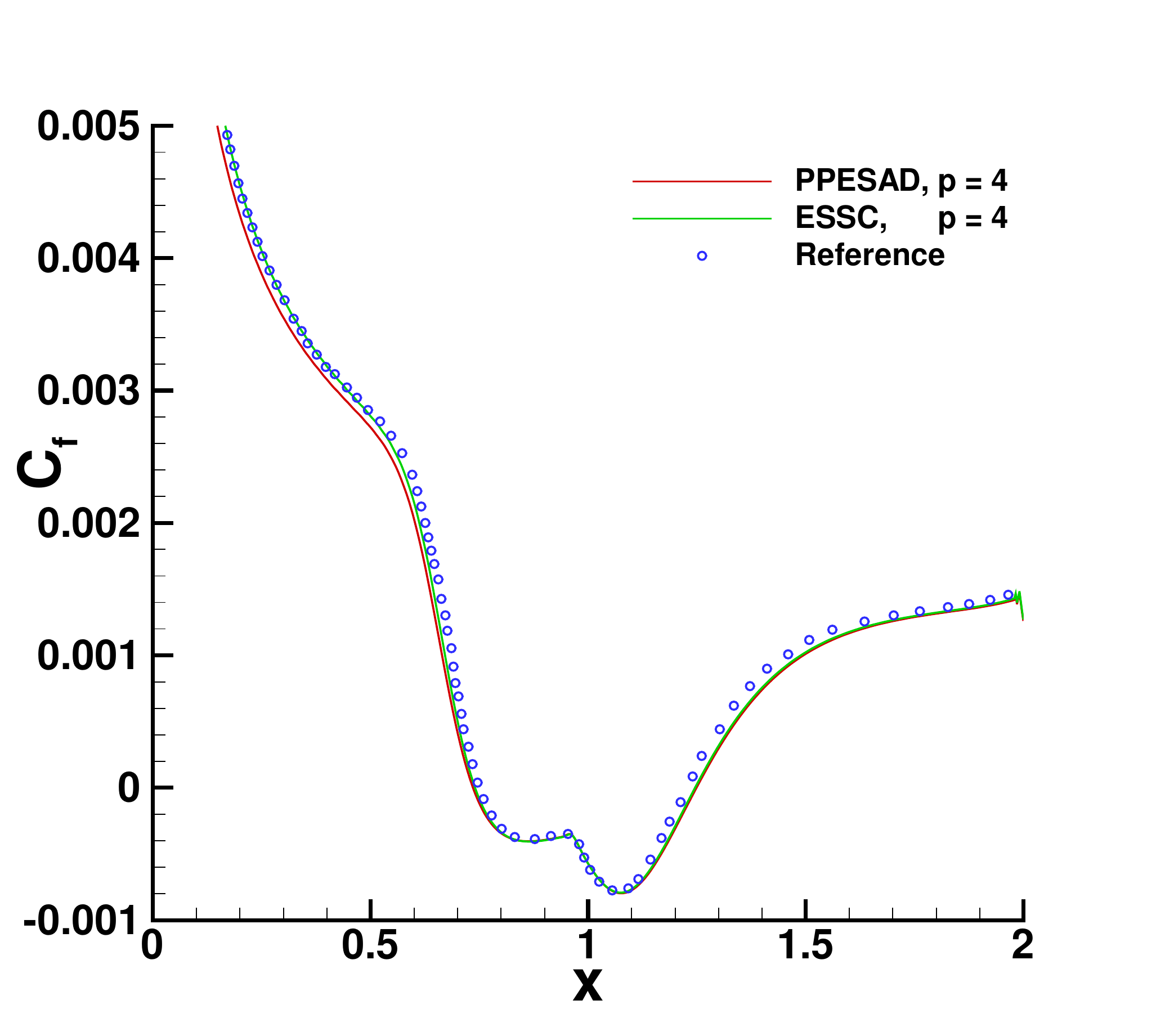} 
		\caption{}
	\end{subfigure}
	\begin{subfigure}{0.5\textwidth}
		\includegraphics[width=0.9\linewidth]{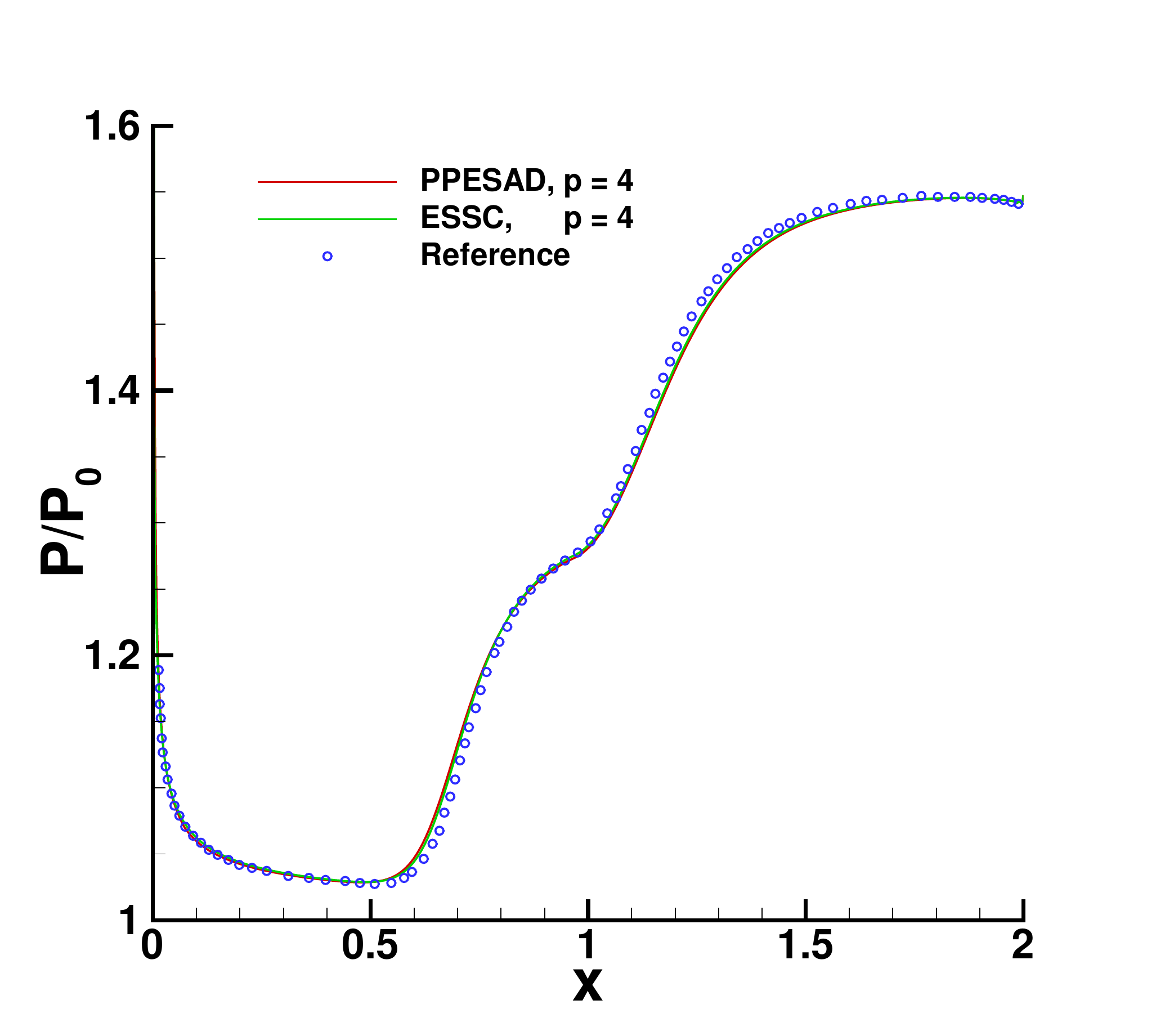}
		\caption{}
	\end{subfigure}
	 \caption{Comparison of wall skin friction (left panel) and pressure profiles obtained with the PESAD-p4 and ESSC-p4 schemes and the reference solution taken from \cite{SBLI_M2_proceedings} for the $Ma = 2.15$ SBLI problem.}
\label{skin_friction_pressure_M2pt15}
\end{figure}
In contrast to the results presented in \cite{Zhang}, we use the entropy stable adiabatic no-slip boundary conditions at the wall 
and penalize against the Blasius solution corresponding to $Ma=200$ at the inflow boundary for solving the Navier-Stokes equations. 
For the viscous flow case, the grid consists of 52944 elements and is clustered near the step surface,  so that the normal grid spacing at the wall is $2.67\times 10^{-3}$.
A uniform rectangular mesh with constant grid spacings $\Delta x = \Delta y$ and $40,000$ elements is used for the inviscid flow simulation. 
For the viscous flow, the Sutherland's law is used and the Reynolds and Prandtl numbers are set equal to $10^4$ and $0.75$, respectively.  

Unlike the ESSC scheme that fails to preserve the positivity of thermodynamic variables for both the inviscid and viscous shock diffraction flows at $Ma=200$,  the new PPESAD-p4 and PPESAD-p5 schemes captures  both the weak and strong and shocks as well as the contact discontinuity within one grid element practically without producing any spurious oscillations,
as one can see in Figure~\ref{densityPres_shockDiffraction_M200}.
Contours of the low- and high-order artificial viscosities of the PPESAD-p5 and PPESAD-p4 schemes for the inviscid and viscous shock diffraction flows at the final time are shown in Fig.~\ref{logmuad_shockDiffraction_M200}. As follows from these results, the artificial viscosity coefficient is at least 3 orders of magnitude smaller at the contact discontinuity than at the shock, thus indicating that the proposed physics-based artificial dissipation method is capable of distinguishing different types of waves.  
%
%
\begin{figure}[!h] 
	\begin{subfigure}{0.5\textwidth}
		\includegraphics[width=0.9\linewidth]{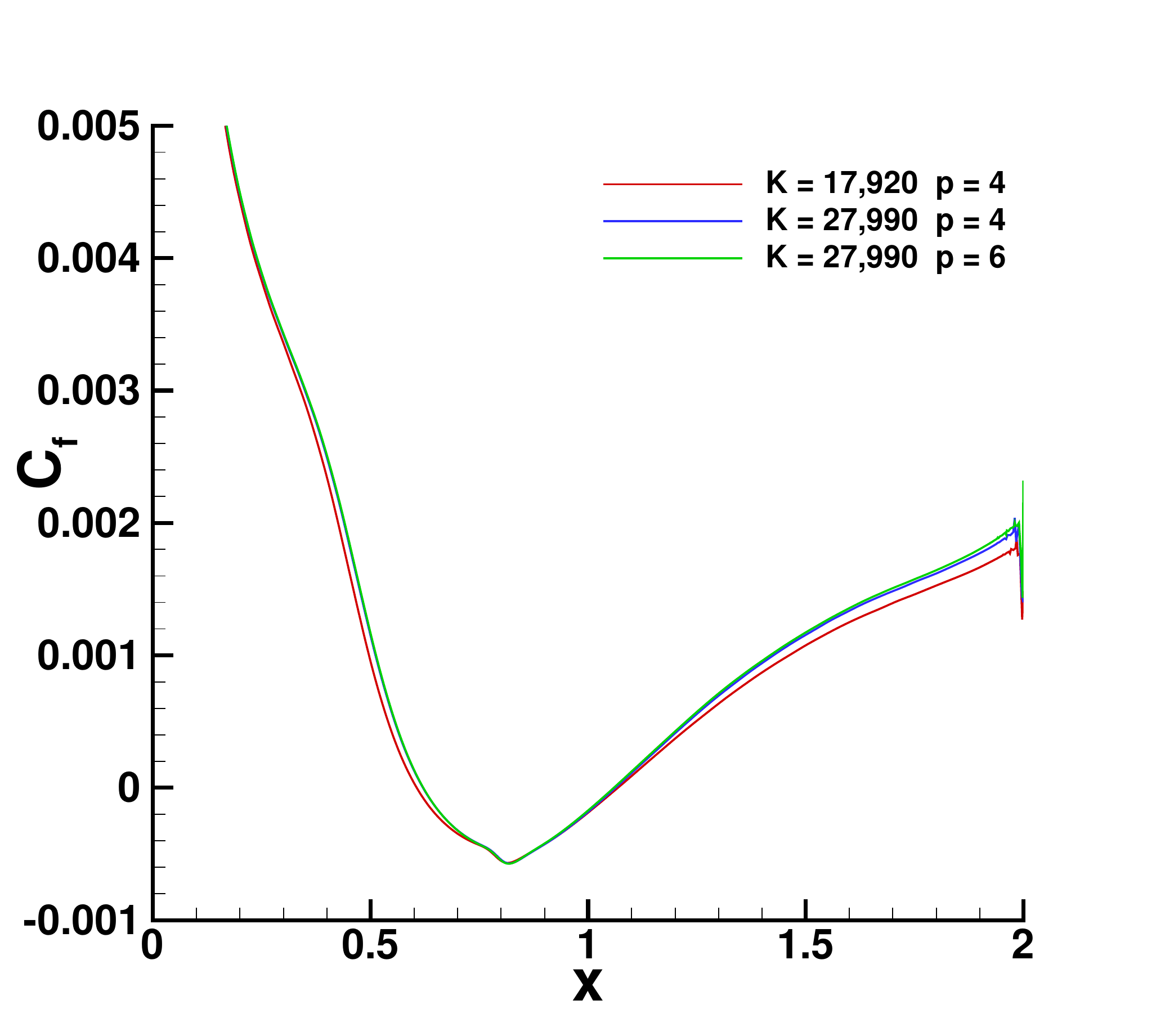} 
		\caption{}
	\end{subfigure}
	\begin{subfigure}{0.5\textwidth}
		\includegraphics[width=0.9\linewidth]{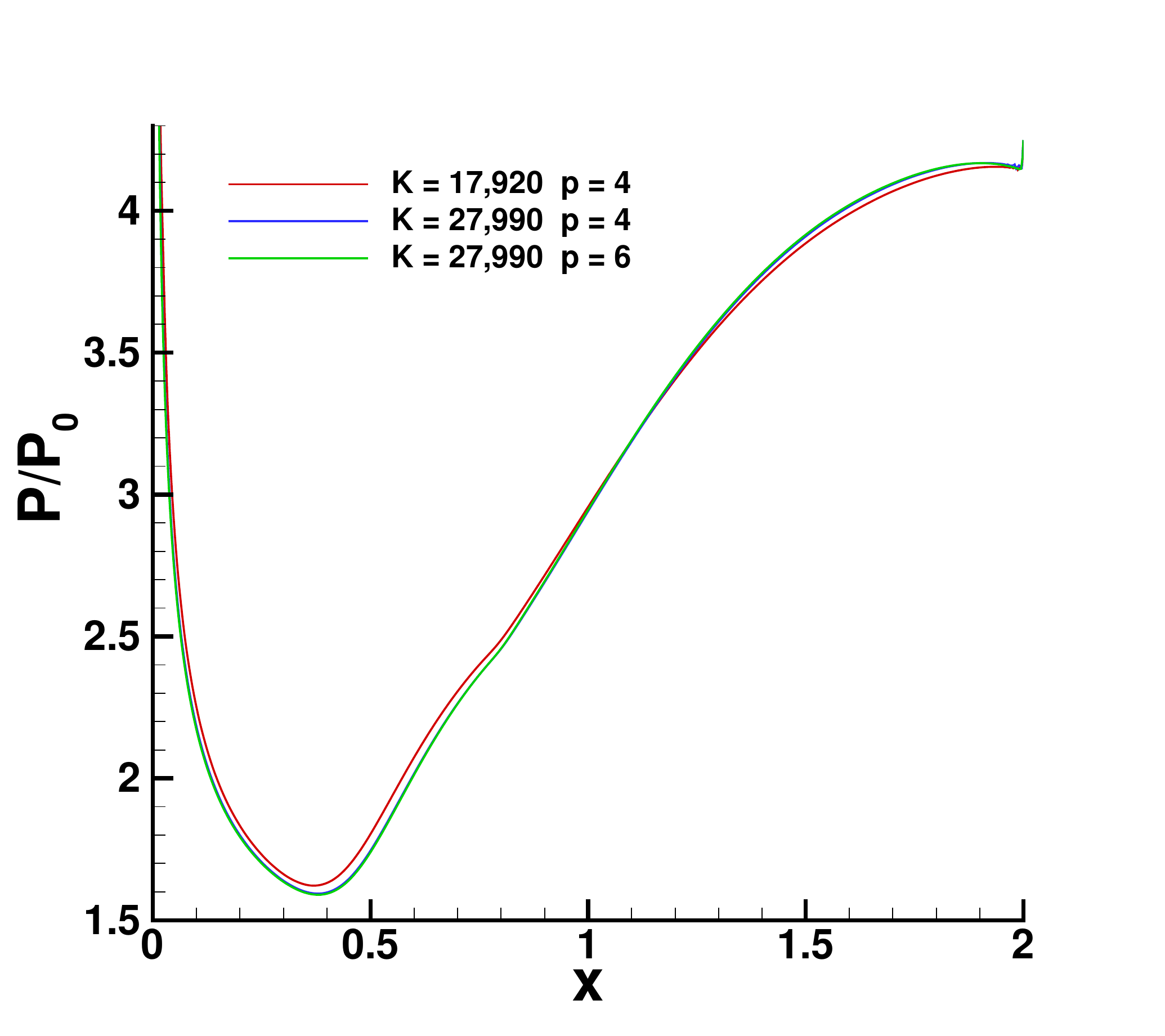}
		\caption{}
	\end{subfigure}
	 \caption{Wall skin friction (left panel) and pressure profiles computed with PPESAD-p4 and PPESAD-p6 schemes on the medium ($K = 17,920$) and fine ($K=27,990$) grids for the $Ma = 6.85$ SBLI problem.}
\label{skin_friction_pressure_M6pt85}
\end{figure}
Note that the artificial viscosity coefficient near the shock is spread out over a wider area for the viscous flow case.  This is mostly due to the fact that the viscous grid has less resolution in this region than the inviscid counterpart.

\subsection{$2$-D Shock/Boundary Layer Interaction }
To test the shock-capturing and positivity-preserving capabilities of the proposed PPESAD scheme and its ability to accurately predict boundary layers, we consider the interaction of a shock wave and a laminar boundary layer for two Mach numbers, $Ma =2.15$ and $Ma=6.85$, and two impinging angles $\theta=30.8^{\circ}$ and $\theta=11.8^{\circ}$, accordingly. For all shock/boundary layer interaction (SBLI) simulations, the physical viscosity is computed by using the Sutherland's law and the Prandtl and Reynolds numbers are set equal to $0.72$, and $10^5$. The $Ma =2.15$ test case is computed on a grid with $17,050$ elements, while the $Ma=6.85$ simulations are performed on $17,920-$ and $27,990-$element grids. 
%
%
\begin{figure}[!h] 
	\begin{subfigure}{\textwidth}
		\includegraphics[width=\linewidth]{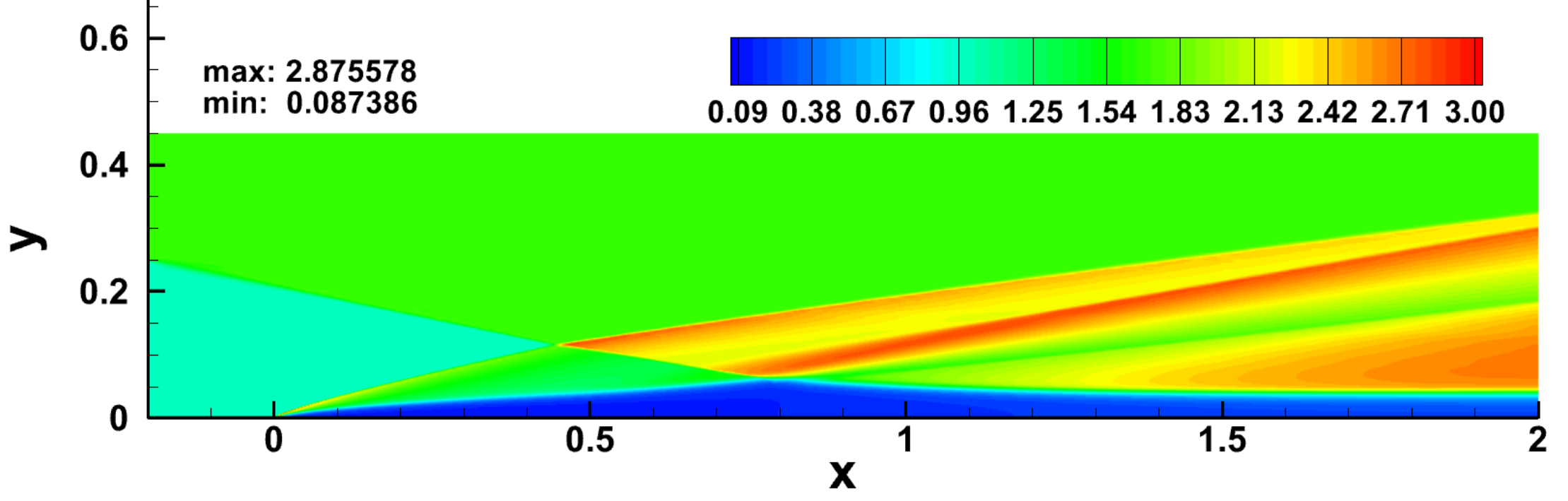} 
		\caption{}
	\end{subfigure}
	 \caption{Density 
	 contours obtained with the PPESAD-p6 scheme on the $27,990-$element grid for the $Ma = 6.85$ SBLI problem.} 
\label{densityPresMach_wholeDomain_M6pt85}
\end{figure}
%
%
%
\begin{figure}[!h] 
	\begin{subfigure}{\textwidth}
		\includegraphics[width=\linewidth]{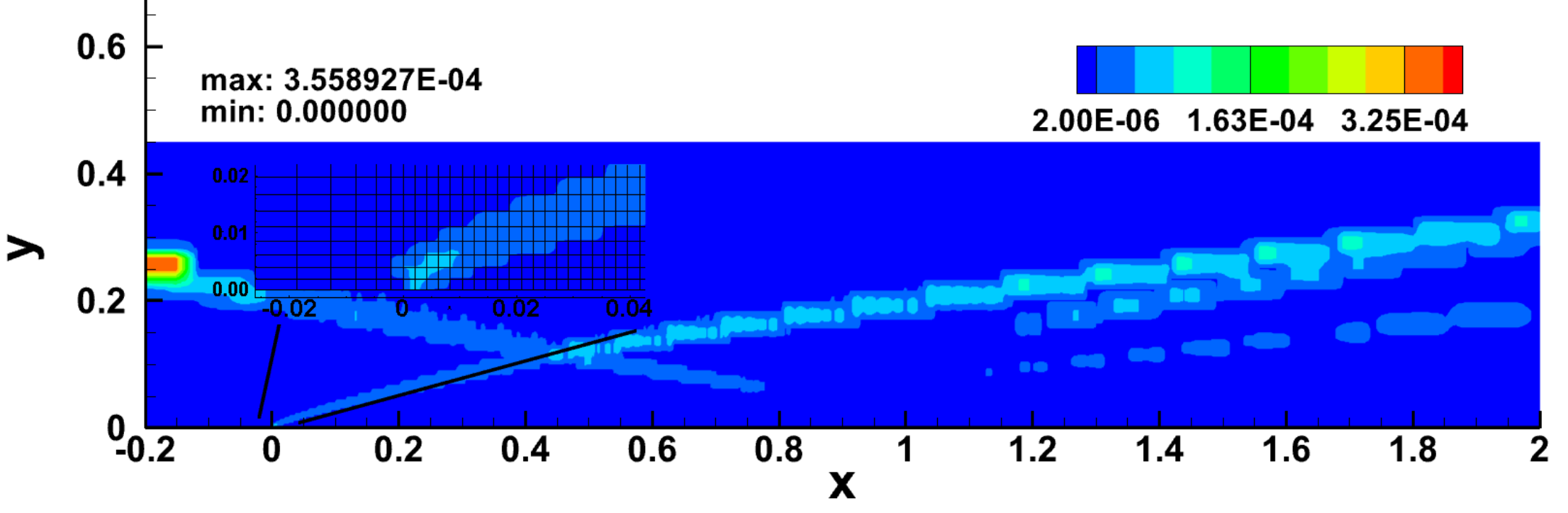} 
		\caption{}
	\end{subfigure}
	 \caption{High-order
	  artificial viscosity coefficient of the PPESAD-p6 scheme on the fine grid for the $Ma = 6.85$ SBLI problem.} 
\label{muAD_wholeDomain_M6pt85}
\end{figure}

Wall skin friction and pressure profiles computed with the PPESAD-p4 and ESSC-p4 schemes and the reference solution obtained using the $p=6$ discontinuous Galerkin method \cite{SBLI_M2_proceedings} for the $Ma = 2.15$ and $\theta=30.8^0$ SBLI problem are presented in Fig.~\ref{skin_friction_pressure_M2pt15}. As follows from this comparison,
the  ESSC-p4 and PPESAD-p4 density and pressure profiles are nearly indistinguishable from each other and demonstrate excellent agreement with the reference solution, thus indicating that the PPESAD-p4 scheme does not over-dissipate the SBLI solution.

Next, we consider the SBLI problem at $Ma=6.85$ and $\theta =11.8^{\circ}$.
For this test case, the ESSC scheme is unable to maintain positivity of thermodynamic variables for $p \ge 2$.  
Therefore, to evaluate the accuracy of the discrete solution, we compare PPESAD solutions obtained on medium ($17,920$ elements) and fine (27,990 elements) grids for polynomial orders $p=4$ and $p=6$. 
%
%
\begin{figure}[!h] 
	\begin{subfigure}{0.5\textwidth}
		\includegraphics[width=0.9\linewidth]{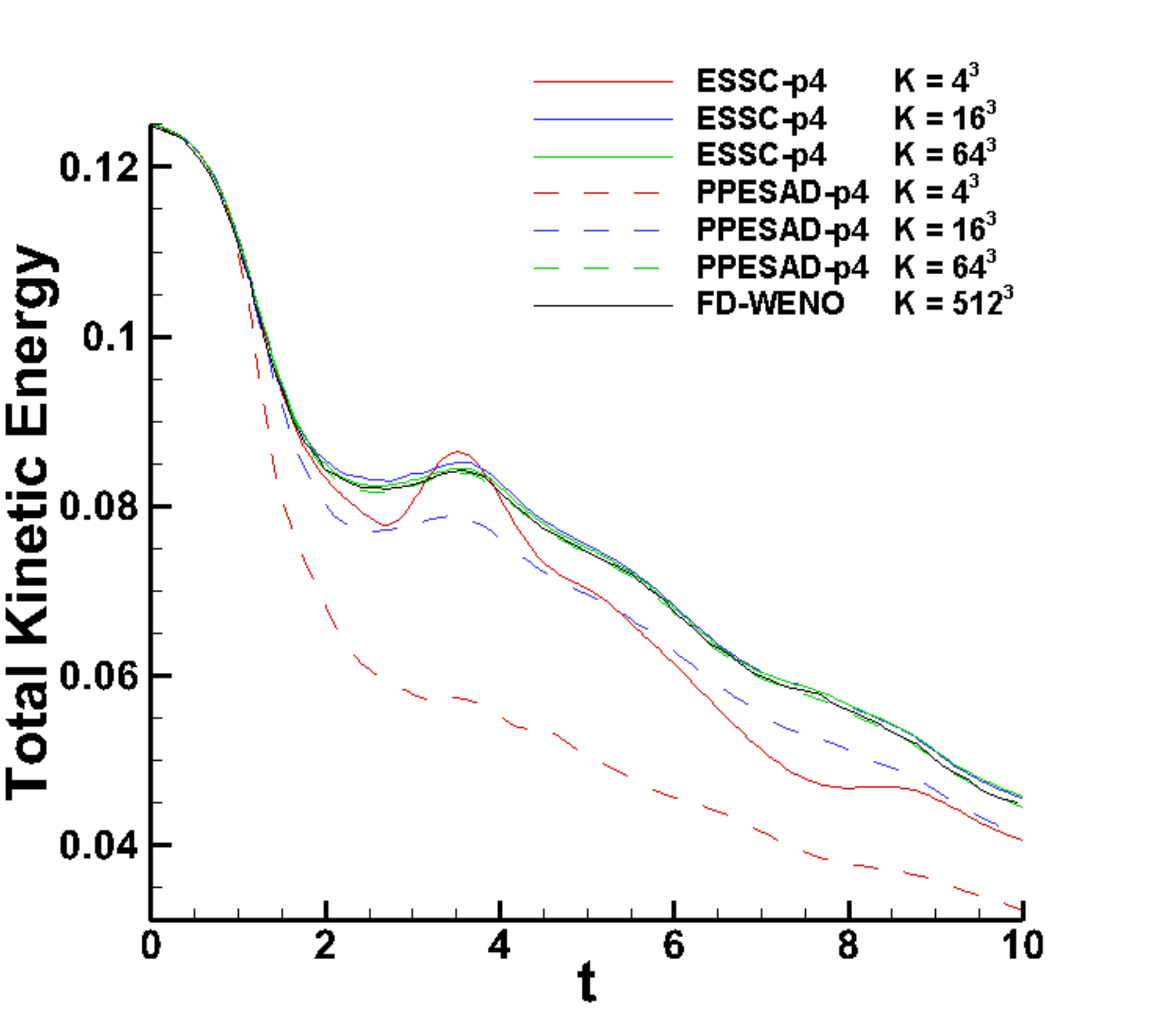} 
		\caption{}
	\end{subfigure}
	\begin{subfigure}{0.5\textwidth}
		\includegraphics[width=0.9\linewidth]{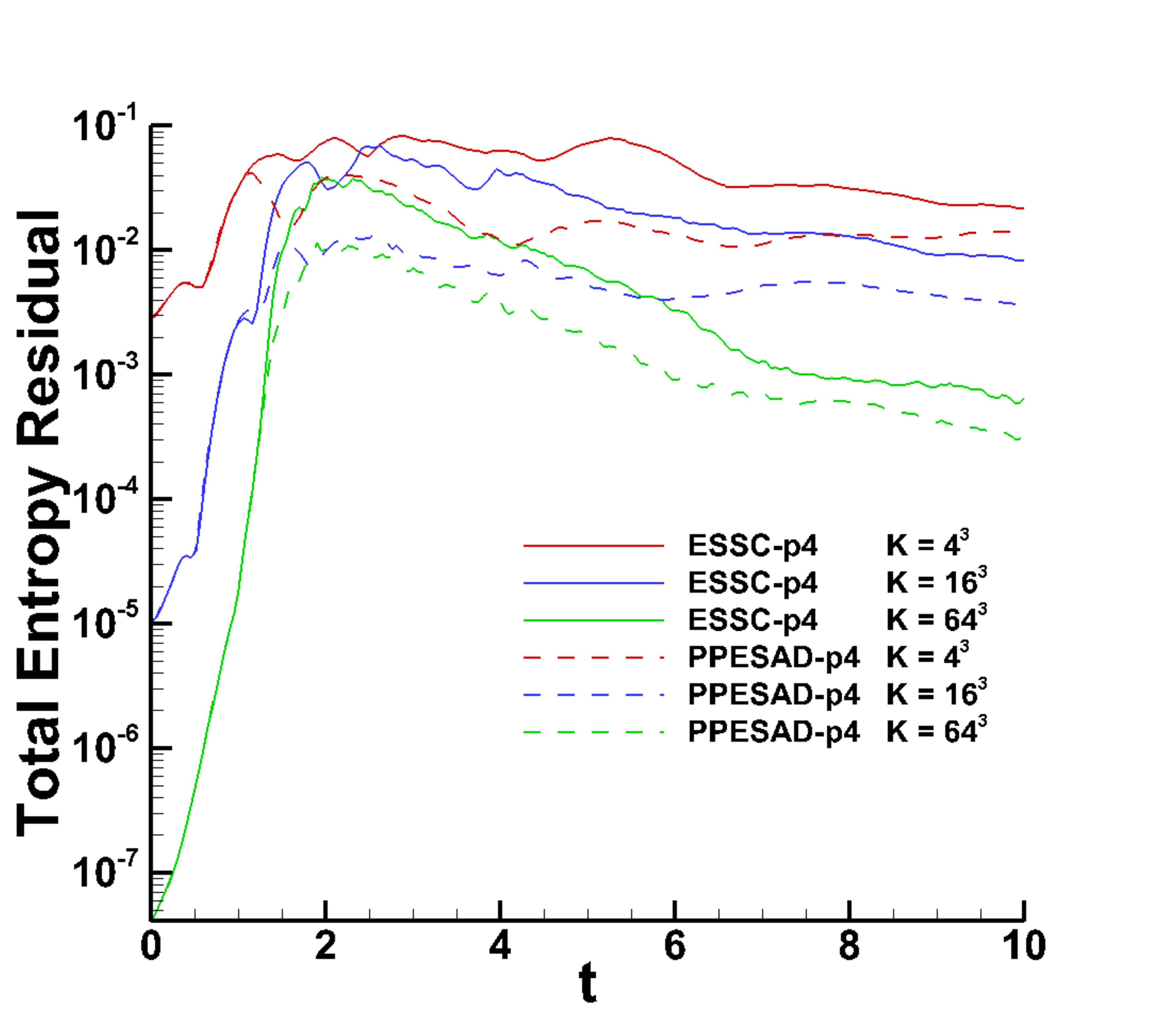}
		\caption{}
	\end{subfigure}
	 \caption{Time histories of the total kinetic energy (left) and total  entropy residual obtained with the ESSC-p4 and PPESAD-p4 schemes and the reference solution computed using FD-WENO scheme\cite{peng2018effects} for the $Ma = 2$ TGV problem. }
\label{KE_Sres_M2TGV}
\end{figure}
%
%
\begin{figure}[!h] 
	\begin{subfigure}{0.5\textwidth}
		\includegraphics[width=0.9\linewidth]{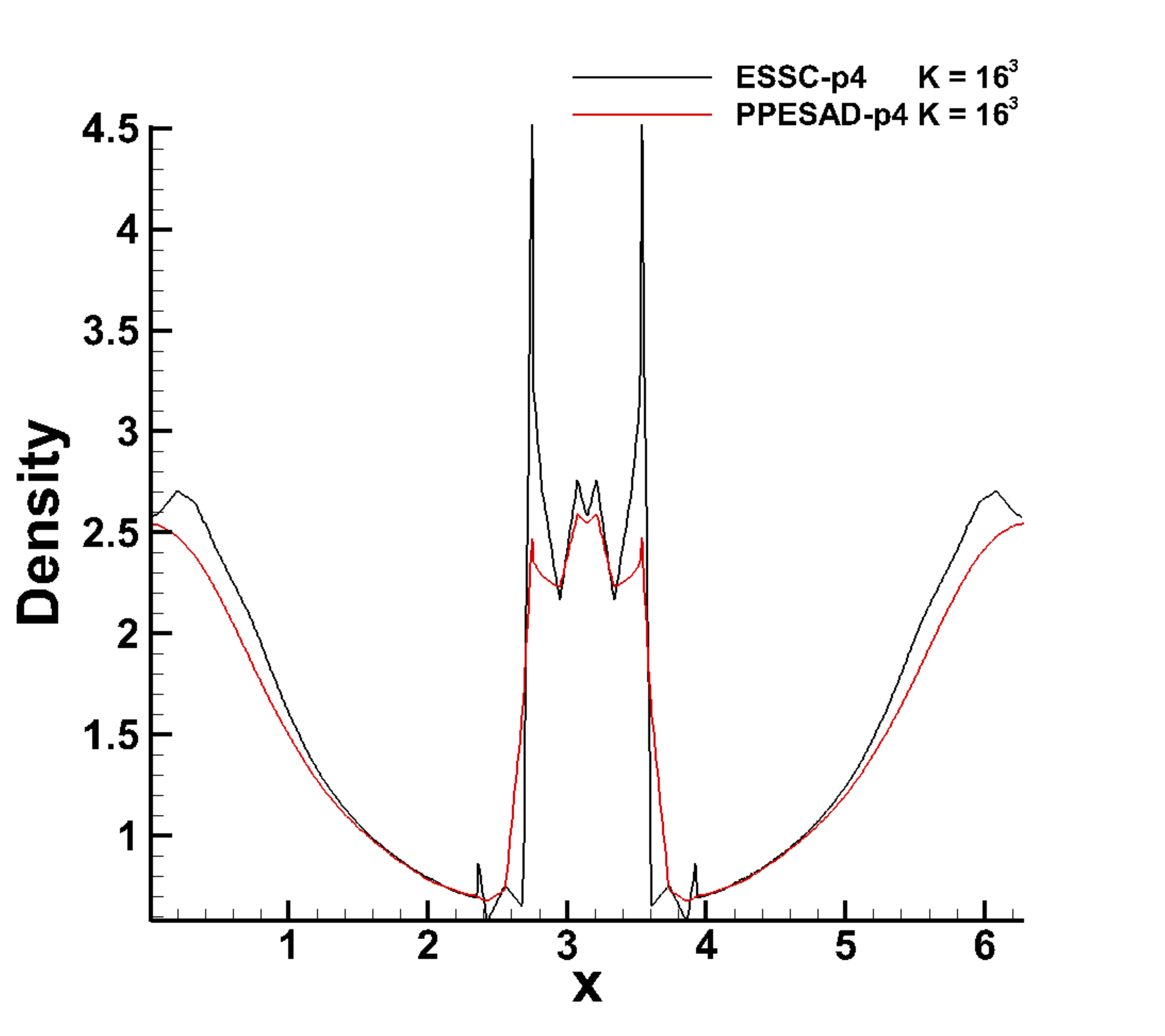} 
		\caption{}
	\end{subfigure}
	\begin{subfigure}{0.5\textwidth}
		\includegraphics[width=0.9\linewidth]{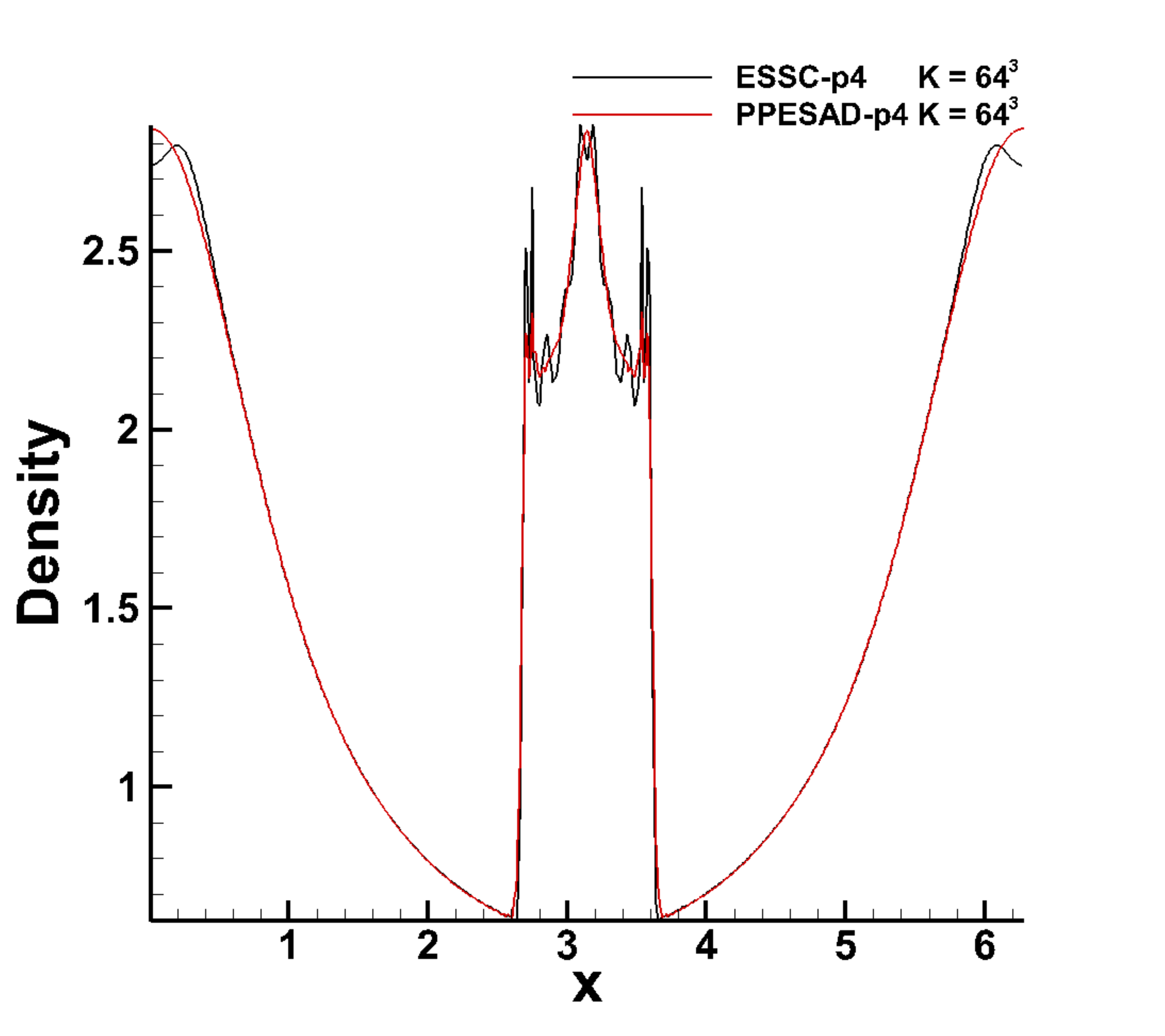}
		\caption{}
	\end{subfigure}
      \caption{Density profiles computed with the PPESAD-p4 and ESSC-p4 schemes on the $16^3$ (left panel) and $64^3$ grids for the $Ma=2$ TGV problem. }
\label{densPresVel_M2TGV}
\end{figure}
Figure~\ref{skin_friction_pressure_M6pt85} shows the comparison of skin friction and pressure profiles computed with the PPESAD-p4 and PPESAD-p6 schemes on the medium and fine grids. The forth-order ($p=4$) medium and fine grid solutions agree very well with each other, while the PPESAD-p4 and PPESAD-p6 solutions on the fine grid are 
nearly identical. Density 
contours computed with the PPESAD-p6  scheme on the fine grid are depicted in Fig.~\ref{densityPresMach_wholeDomain_M6pt85}. Similar to the previous test problems, the proposed PPESAD-p6 scheme demonstrates excellent shock-capturing capabilities and provides nearly nonoscillatory solution for this high-Mach-number SBLI flow. High-order artificial viscosity contours that correspond to the discrete solution shown in Fig.~\ref{densityPresMach_wholeDomain_M6pt85} are presented in Fig.~\ref{muAD_wholeDomain_M6pt85}. 
%
%
\begin{figure}[!h] 
	\begin{subfigure}{0.5\textwidth}
		\includegraphics[width=0.9\linewidth]{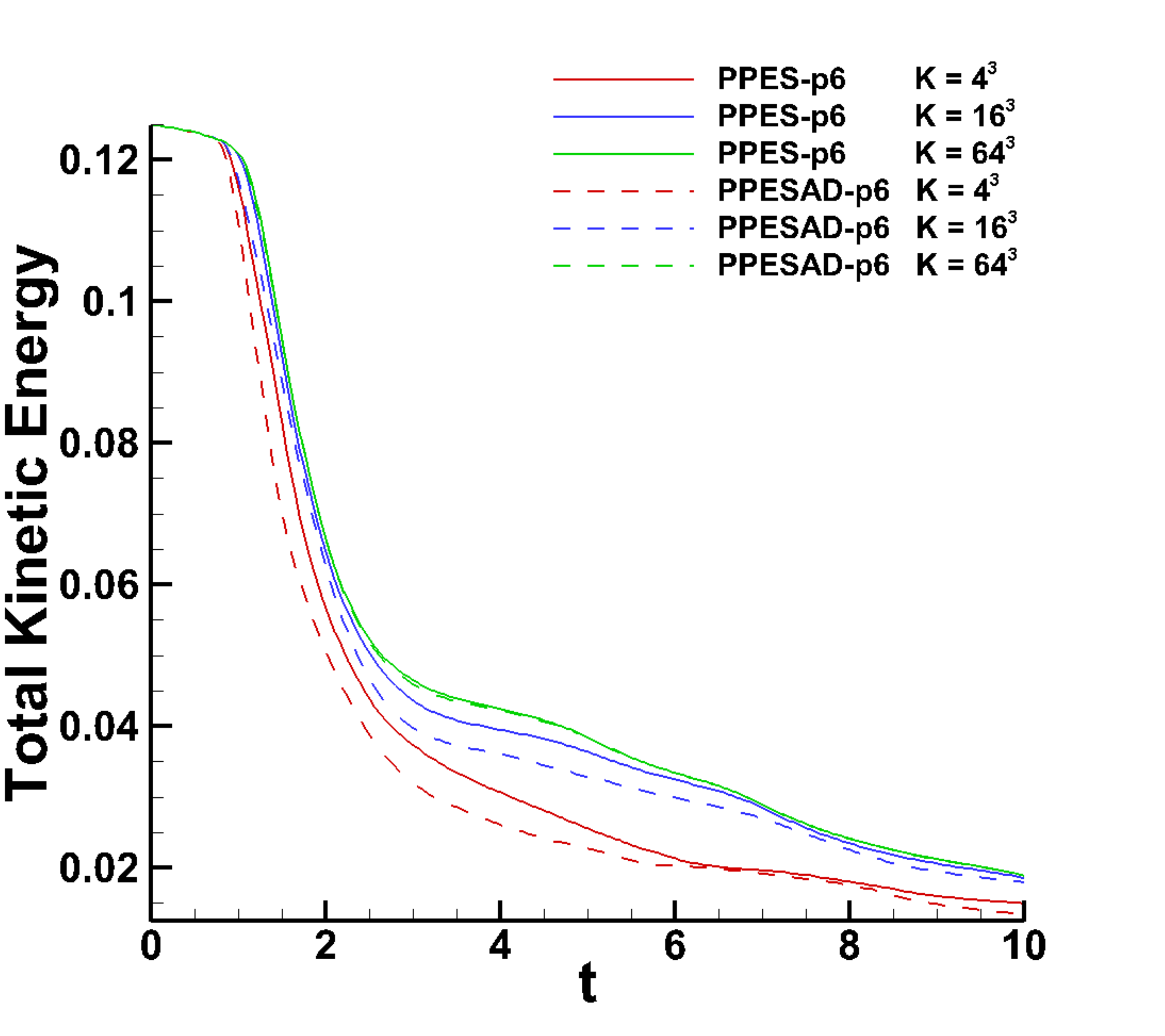} 
		\caption{}
	\end{subfigure}
	\begin{subfigure}{0.5\textwidth}
		\includegraphics[width=0.9\linewidth]{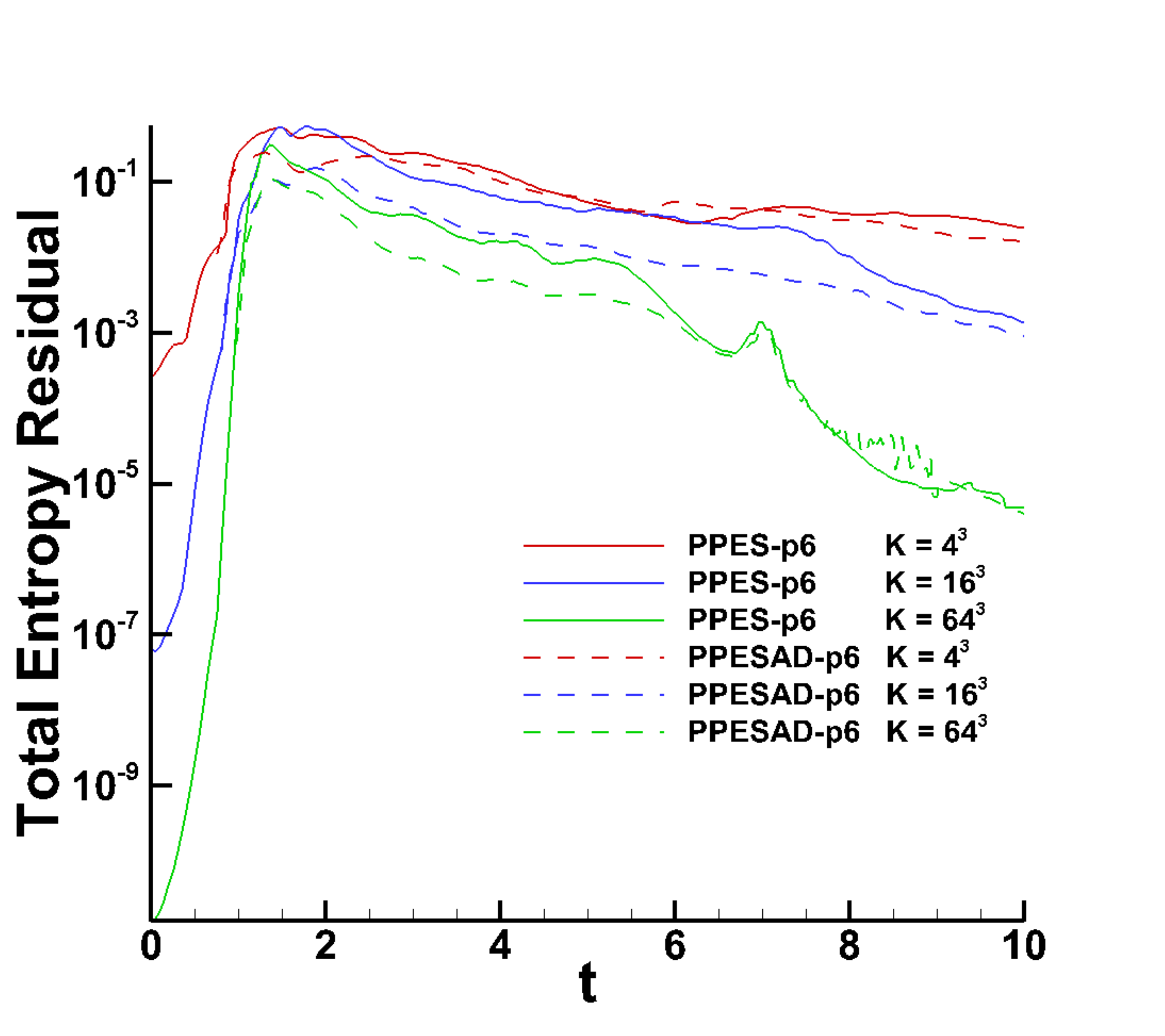}
		\caption{}
	\end{subfigure}
	 \caption{Time histories of the total kinetic energy (left panel) and total  entropy residual obtained with the PPES-p6 and PPESAD-p6 schemes for the $Ma = 10$ TGV problem on $4^3$, $16^3$ and $64^3$ grids.}
\label{KE_Sres_M10TGV}
\end{figure}
%
%
\begin{figure}[!h] 
	\begin{subfigure}{0.5\textwidth}
		\includegraphics[width=0.9\linewidth]{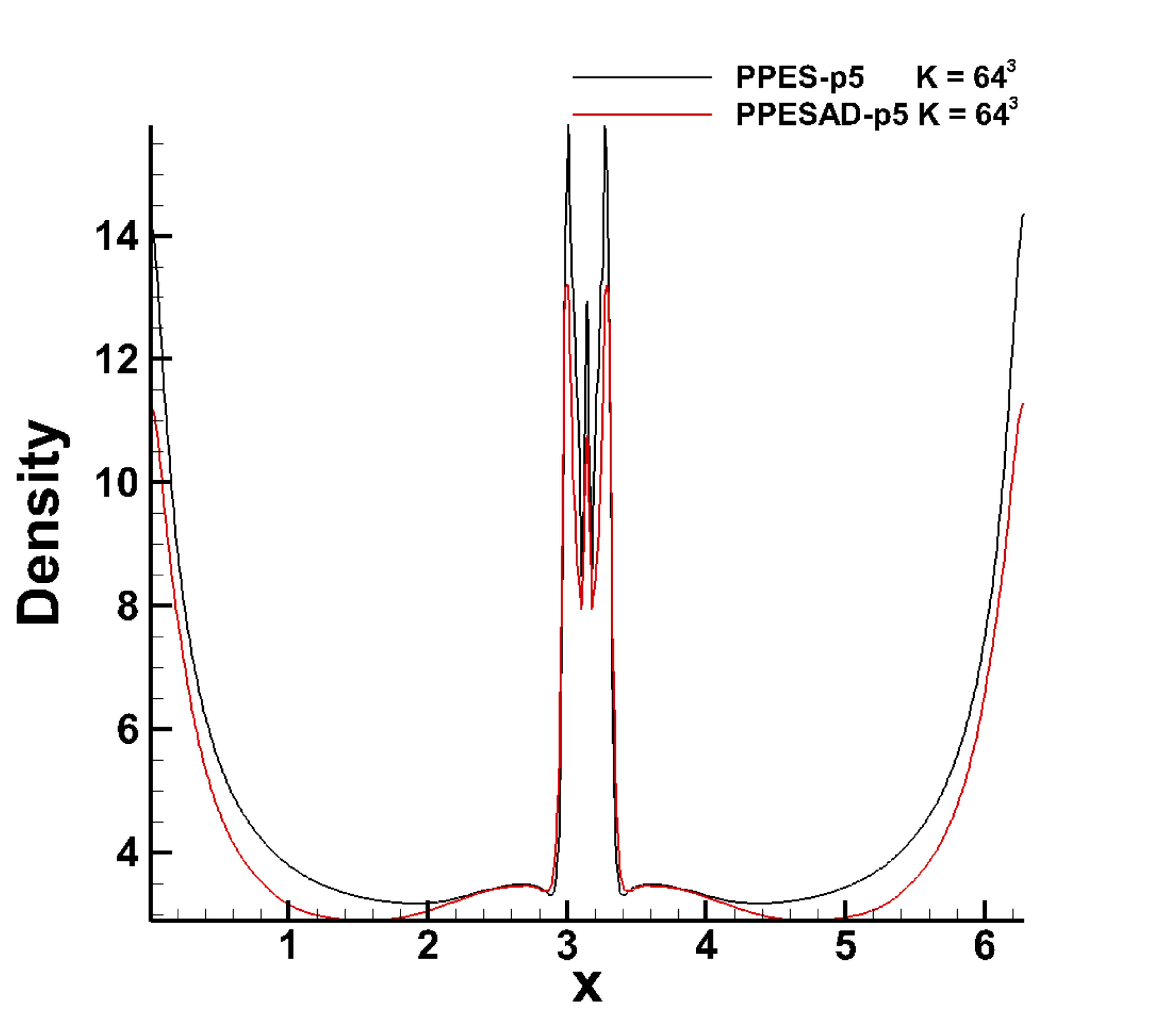} 
		\caption{}
	\end{subfigure}
	\begin{subfigure}{0.5\textwidth}
		\includegraphics[width=0.9\linewidth]{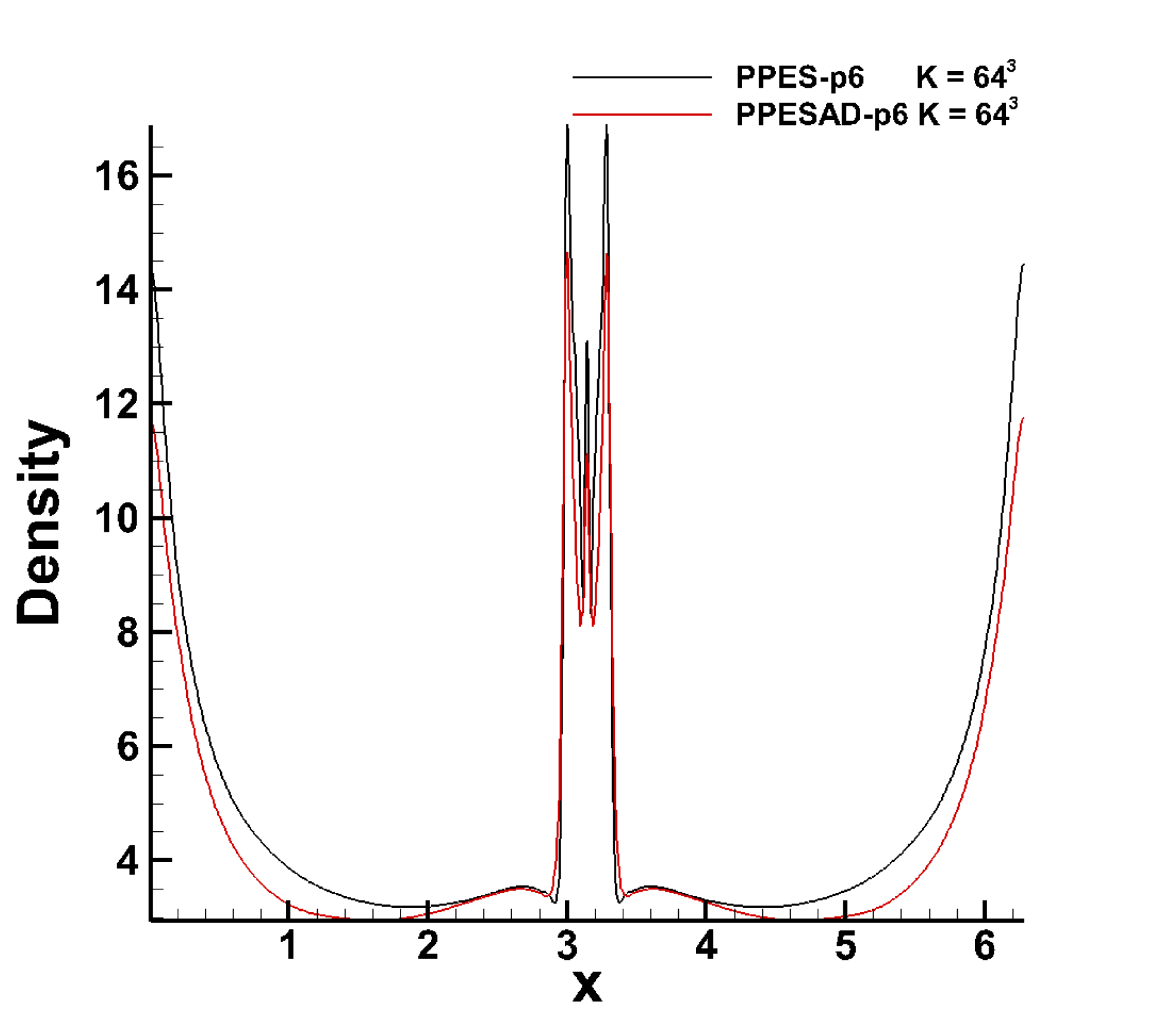}
		\caption{}
	\end{subfigure}
	 \caption{Density profiles computed with the  $p=5$ (left column) and $p=6$ PPESAD and PPES schemes for the $Ma=10$ TGV problem on the $64^3$  grid. }
\label{densPresVel_M10TGV}
\end{figure}
The low-order artificial viscosity is nonzero only in six elements near the flat plate leading edge and therefore not presented herein.
Furthermore,  the flux limiter in Eq.~\eqref{EQN_LIMSCHEME} is equal to 1 everywhere except for  the single element near the leading edge.  The high-order artificial viscosity of the PPESAD-p6 scheme is mostly zero everywhere except for at the shocks.  Note that the velocity and temperature limiters have never been turned on for this test case.

\subsection{3-D supersonic Taylor-Green vortex flow}
The last test problem is the 3-D viscous, compressible Taylor-Green vortex (TGV) flow at Mach numbers $Ma = 2$ and $Ma = 10$. This problem is considered to test how the proposed PPESAD scheme performs for under-resolved turbulent flows with strong shock waves.  We adopt the TGV flow parameters used in \cite{peng2018effects} and compare our results for the $Ma = 2$ case first.  For all TGV flows considered, the Sutherland's law is used for the physical viscosity, and the Reynolds and Prandtl numbers are $400$ and $0.7$, respectively.  
The problem is solved on the periodic cube, $0 \leq x,y,z \leq 2\pi$, with the following initial conditions: 
$[\rho, \bfnc{V}, T] = [1 + \frac{1}{16}(\cos 2x + \cos 2y) (\cos 2z + 2), \sin x \cos y \cos z, - \cos x \sin y \cos z, 0, 1]^\top$.

The comparison of kinetic energy histories obtained with the ESSC-p4 and PPESAD-p4 schemes and the hybrid 8th-order compact finite difference/ 7th-order weighted essentially nonoscillatory (WENO) scheme  \cite{peng2018effects} are presented in Fig.~\ref{KE_Sres_M2TGV}a. As follows from this comparison, the kinetic energy computed using the new spectral collocation scheme on the uniform $64^3$-element grid is practically  identical to that of the ESSC-p4 scheme on the same grid and in an excellent agreement with that computed by the 7th-order FD-WENO scheme on the $512^3$-element grid. 
On the $4^3$ and $16^3$grids, the PPESAD-p4 scheme dissipates the total kinetic energy more than the ESSC-p4 scheme.  However, this does not imply that the ESSC-p4 solution is overall more accurate, which can be observed in Figs.~\ref{KE_Sres_M2TGV}b and \ref{densPresVel_M2TGV}. Indeed,
the entropy residual obtained with ESSC-p4 scheme is significantly larger than that of the PPESAD-p4 scheme, thus indicating the larger discretization error.
The ESSC-p4 solution on the $16^3$ grid contains large spurious overshoots that are not present in the corresponding PPESAD-p4 solution. It should be emphasized that both solutions converge to each other as the grid is refined.

For $Ma=10$, the ESSC scheme fails to preserve the positivity of thermodynamic variables for the viscous TGV problem, even if $p=1$.
Therefore, we compare the PPESAD and PPES solutions for the $Ma = 10$  case.  Overall, Figs.~\ref{KE_Sres_M10TGV} and \ref{densPresVel_M10TGV} show a similar behavior
observed for the $Ma=2$ test case. On coarse meshes, PPESAD-p6 solution is less prone to spurious oscillations than the PPES-p6 counterpart, while both solutions converge to each other as the grid is refined.

\bigskip  
{\bf Acknowledgments}
The first author was supported by the Virginia Space Grant Consortium
Graduate STEM Research Fellowship and the Science, Mathematics
and Research for Transformation (SMART) Scholarship. The second author
acknowledges the support from Army Research Office through grant
W911NF-17-0443.


\end{document}